\documentclass[a4paper,10pt]{article}
\usepackage{amsmath}
\usepackage{amsthm}
\usepackage{amsfonts}
\usepackage{amssymb}
\usepackage[left=2.3cm,top=2.2cm,right=2.3cm,bottom=2.2cm]{geometry}

\newtheorem{lem}{Lemma}[section]
\newtheorem{thm}{Theorem}[section]
\newtheorem{co}[thm]{Corollary}
\numberwithin{equation}{section}

\begin{document}
\title{Branching random walk with a  random environment in time }
\author{Chunmao HUANG$^{a}$, Quansheng LIU$^{b,}$\footnote{Corresponding author at:  Universit\'e de Bretagne-Sud,  UMR 6205, LMBA, F-56000 Vannes, France.  \newline \indent \ \ Email addresses: cmhuang@hitwh.edu.cn (C. Huang), quansheng.liu@univ-ubs.fr (Q. Liu).}
\\
\small{\emph{$^{a}$Harbin institute of technology at Weihai, Department of mathematics, 264209, Weihai, China}}\\
\small{\emph{$^{b}$Universit\'e de Bretagne-Sud,  UMR 6205, LMBA, F-56000 Vannes, France}}}
 \date{}
\maketitle

\begin{abstract}
  We consider a branching random walk on $\mathbb{R}$ with a stationary and ergodic environment $\xi=(\xi_n)$ indexed by time $n\in\mathbb{N}$.  Let $Z_n$ be the counting measure of particles of generation $n$.  For the case where the corresponding branching process  $\{Z_n(\mathbb{R})\}$ $ (n\in\mathbb{N})$ is supercritical, we establish large deviation principles, central limit theorems and a local limit theorem  for the sequence of counting measures $\{Z_n\}$, and prove that the position $R_n$ (resp. $L_n$) of rightmost (resp. leftmost) particles of generation $n$ satisfies a law of large numbers.
 \\*

 \emph{AMS 2010 subject classifications.}  60J80, 60K37, 60F10, 60F05.

\emph{Key words:} Branching random walk, random environment, large deviation, central limit theorem, local limit theorem.
\end{abstract}
\section{Introduction}\label{LTS1}

\noindent A \emph{random environment in time} is modeled as a stationary and ergodic
sequence  of random variables, $\xi_n$, indexed by the time $n\in\mathbb{N}$, taking values in some measurable space $\Theta$. Each realization of $\xi_n$ corresponds to a distribution $\eta_n=\eta(\xi_n)$
  on $\mathbb{N}\times\mathbb{R}\times\mathbb{R}\times\cdots$.

When the environment $\xi=(\xi_n)$ is given, the process can be described as follows. At time $0$, there is an initial particle $\emptyset$ of generation $0$ located at $S_{\emptyset}=0\in\mathbb{R}$;
at time $1$, it is replaced by $N=N({\emptyset})$ particles of generation $1$, located at $L_i=L_i({\emptyset })$, $1\leq i\leq N$, where the random vector
$X({\emptyset})=(N, L_1, L_{2},\cdots)\in\mathbb{N}\times\mathbb{R}\times\mathbb{R}\times\cdots$ is of distribution $\eta_0=\eta(\xi_0)$  (given the environment $\xi$).   In general, each particle $u=u_1\cdots u_n$ of generation $n$ located at $S_u$ is replaced at
time {n+1} by $N(u)$ new particles $ui$ of generation $n+1$, located at
$$S_{ui}=S_u+L_i(u)\qquad(1\leq i\leq N(u)),$$
where the  random vector $X(u)=(N(u),L_1(u),L_2(u),\cdots)$ is of distribution $\eta_n=\eta(\xi_n)$.  Note that the values $L_i(u)$ for $i> N(u)$ do not play any role for our model; we introduce them only for convenience. We can for example take $L_i(u)=0$ for $i> N(u)$. All particles behave independently conditioned on the environment $\xi$.

Let $(\Gamma,  \mathbb{P}_\xi)$ be the probability space under which the
process is defined when the environment $\xi$ is fixed.  As usual,
$ \mathbb{P}_\xi$ is called {\em quenched law}.
The  total probabilifty space can be formulated as the product space
$(\Gamma\times \Theta^\mathbb{N },  \mathbb{P})$, where $ \mathbb{P} =  \mathbb{P}_{\xi}\otimes \tau $ in the sense  that for all measurable and positive
$g$, we have $$\int g d\mathbb{P} = \int_{\Theta^{\mathbb{N}}} \left(\int_\Gamma g(\xi,y) d \mathbb{P}_{\xi}(y) \right)d \tau  (\xi),$$
where $\tau$ is the law of the environment $\xi$.  The total probability $\mathbb{P}$ is usually called
{\em annealed law}. The quenched law $ \mathbb{P}_\xi$ may be considered to be the conditional
probability of $\mathbb{P}$ given $\xi$.  Let $\mathbb{U}=\{\emptyset\}\bigcup_{n\geq1}\mathbb{N}^n$ be the set of all finite sequence $u=u_1\cdots u_n$. By definition, under $\mathbb{P}_\xi$, the random vectors $\{X_u\}$, indexed  by $u\in\mathbb{U}$, are independent of each other, and each  $X_u$ has distribution $\eta_n=\eta(\xi_n)$ if $|u|=n$, where $|u|$ denotes the length of $u$.

Let $\mathbb{T}$ be the Galton-Watson tree  with defining element $\{N(u)\}$. We have: (a) $\emptyset\in\mathbb{T}$; (b) if $u\in\mathbb{T}$, then $ui\in\mathbb{T}$ if and only if $1\leq i\leq N(u)$; (c) $ui\in\mathbb{T}$ implies $u\in\mathbb{T}$. Let  $\mathbb{T}_n=\{u\in\mathbb{T}: |u|=n\}$ be the set of particles of generation $n$ and
$$Z_n=\sum_{u\in\mathbb{T}_n}\delta_{S_u}$$
be the counting measure of particles of generation $n$, so that for a subset $A$ of $\mathbb{R}$, $Z_n(A)$ is the number of particles of generation $n$ located in $A$. For any finite sequence $u$, let
$$ X^{(u)} = \sum_{i=1}^{N(u)}   \delta_{L_i (u)}   $$
be the counting measure  corresponding to the random vector $X(u)$, whose increasing points are $L_i(u)$,  $1\leq i \leq N(u)$.
Denote $$X_n=X^{(u_0|n)},$$ where $u_0=(1,1,\cdots)$ and $u_0|n$ is the restriction to its first $n$ tems, with the convention that   $u_0|0=\emptyset$.  For simplicity, we introduce the following notations:
\begin{equation}\label{LTE1.1}
N_n=X_n(\mathbb{R}),\qquad m_n= \mathbb{E}_\xi N_n,
 \qquad P_0=1\quad \text{and}\quad P_n=\mathbb{E}_\xi Z_n(\mathbb{R})=\prod_{i=0}^{n-1}m_i.
\end{equation}

Let
$$\mathcal{F}_0=\sigma(\xi),\quad
\mathcal{F}_n = \sigma ( \xi, (X(u): |u| <n)) \;\;\text{for $n\geq 1$}$$
 be the $\sigma$-field containing all the
information concerning the first $n$ generations. It is well known that the sequence
$\{Z_n(\mathbb{R})/P_{n}\}$ is a non-negative martingale under $\mathbb{P}_\xi$ for every $\xi$ with respect
to  the filtration $\mathcal {F}_n$, hence it converges almost surely (a.s.) to a random variable
denoted by $W$.  Throughout this paper we  always assume that
\begin{equation}\label{LTE1.2}
\mathbb{E}\log m_{0}\in (0,\infty )\quad \text{ and }\quad
\mathbb{E}\frac{N}{m_{0}}\log^{+}N<\infty.
\end{equation}
The first condition means that the corresponding branching process in random environment, $\{Z_n(\mathbb{R})\}$, is \emph{supercritical}; the second implies that $W$ is non-degenerate. Hence (see e.g. Athreya and Karlin (1971, \cite{a1}))
\begin{equation*}
\mathbb{P}_{\xi }(W>0)=\mathbb{P}_{\xi }(Z_{n}(\mathbb{R})\rightarrow \infty )=\lim_{n\rightarrow
\infty }\mathbb{P}_{\xi }(Z_{n}(\mathbb{R})>0)>0\quad a.s..
\end{equation*}

In  this paper,  we  are interested in asymptotic properties of the sequence of measures $\{Z_n\}$.

Our first objective is to show a large deviation principle for  $\{Z_n(n\cdot)\}$  (Theorem \ref{LTT2.1.5}). Our approach uses the G\"artner-Ellis theorem.  In the proof,
 we first demonstrate that the sequence of quenched means $\{\mathbb{E}_\xi Z_n(n\cdot)\}$ satisfies a large deviation principle,
 and  then  show that the free energy $\frac{\log\tilde Z_n(t)}{n}$, where $\tilde Z_n(t)=\sum_{u\in\mathbb{T}_n}e^{tS_u}$  denotes the partition function, converges a.s.  to a limit that we calculate explicitly (Theorem \ref{LTT2.1.4}). Moreover, we also show that the position $R_n$ (resp. $L_n$) of  rightmost (resp. leftmost) particles of generation $n$ satisfies a law of large numbers  (Theorem \ref{LTT2.1.7}): $\frac{R_n}{n}$ (resp. $\frac{L_n}{n}$) converges a.s.  to a limit that we determine explicitly. These results generalize those of Biggins (1977, \cite{biggins}), Franchi (1995, \cite{franchi}) and  Chauvin \& Rouault (1997, \cite{chauvin}) for the deterministic environment case.

 Our second objective is to show central limit theorems and related results for $\{Z_n \}$.
For a deterministic branching random walk, Kaplan and Asmussen (1976, \cite{ka}) proved the following central limit theorem. Assume that $m=\mathbb{E}N\in(1,\infty)$ and that  $\frac{\mathbb{E}X_0(\cdot)}{m}$ has mean $0$ and variance $1$. If $\mathbb{E}N(\log N)^{1+\varepsilon}<\infty$ for some $\varepsilon>0$, then
\begin{equation}\label{BREE1}
m^{-n}Z_n(-\infty, \sqrt{n}x]\rightarrow\Phi(x)W\qquad a.s.\quad \forall x\in\mathbb{R},
\end{equation}
where $\Phi(x)$ is the distribution function of the standard normal distribution $\mathcal{N}(0,1)$. They also gave a local version of (\ref{BREE1}) under the stronger moment condition that $\mathbb{E}N(\log N)^{\gamma}<\infty$ for some $\gamma>3/2$. The formule (\ref{BREE1}), which was  first conjectured by Harris \cite{Harris}, has been studied by many authors, see e.g. Stam (1966, \cite{stam}), Kaplan \& Asmussen (1976, \cite{ka}),  Klebaner (1982, \cite{k}) and Biggins (1990, \cite{b}). We shall show the following version of (\ref{BREE1})  (Theorem \ref{LTT1.1.6})  for a branching random walk in a random environment:  under certain moment conditions, the sequence of probability measures $\frac{Z_n(b_n\cdot+a_n)}{Z_n(\mathbb{R})}$, with $(a_n, b_n)$ that we calculate explicitly,  converges to the standard normal distribution $\mathcal{N}(0,1)$  in law a.s. on the survival event $\{Z_n\rightarrow\infty\}$.  The technic in the proof is a mixture of  Klebaner (1982) and Biggins (1990) by  considering the characteristic function and  choosing an appropriate truncation function. We shall also show a corresponding local limit theorem (Theorem \ref{LTT1.1.9}) under stronger moment conditions, which generalizes the result of Biggins (1990, Theorem 7) on deterministic branching random walks. From Theorem \ref{LTT1.1.9} we obtain another form of local limit theorem (Corollary \ref{BRCC1}), which coincides with the result of Kaplan \&  Asmussen (1976, Theorem 2) for the deterministic environment case.

Moreover, we shall also show large deviation principles and central limit theorems for probability mesures with different normins: ${\frac{\mathbb{E}_\xi Z_n(\cdot)}{\mathbb{E}_\xi Z_n(\mathbb{R})}}$, $ {\frac{\mathbb{E} Z_n(\cdot)}{\mathbb{E} Z_n(\mathbb{R})}}$, $ {\mathbb{E}\frac{Z_n(\cdot)}{\mathbb{E}_\xi Z_n(\mathbb{R})}}$ and $\mathbb{E}_\xi\frac{Z_n(\cdot)}{Z_n(\mathbb{R})}$.
\\*

This paper is organized as follows. In Sections \ref{LTS2} - \ref{LTS5}, we consider   large deviations. In Section \ref{LTS2}, we show large deviation principles for $ {\mathbb{E}_\xi Z_n(n\cdot)}$, $ {\mathbb{E}Z_n(n\cdot)}$ and $ {\mathbb{E}\frac{Z_n(n\cdot)}{\mathbb{E}_\xi Z_n(\mathbb{R})}}$. In Section \ref{LTS3}, we state a convergence result for the free energy, a  large deviation principle for $Z_n(n\cdot)$ and laws of large numbers for $R_n$ and $L_n$. In Section \ref{LTS4}, we prove the results of Section \ref{LTS3}. In Section \ref{LTS5}, we show a large deviation principle for $\mathbb{E}_\xi\frac{Z_n(n\cdot)}{Z_n(\mathbb{R})}$. In Sections \ref{LTS6} - \ref{LTS12}, we study central limit theorems. In Section \ref{LTS6}, we consider a branching random walk in a varying environment and state the corresponding limit theorems. In Sections \ref{LTS7} and \ref{LTS8}, we prove the results of Section \ref{LTS6}. From Sections \ref{LTS9} to \ref{LTS12}, we return to a branching random walk in a random environment:  in Section \ref{LTS9}, we show central limit theorems for  $ {\frac{\mathbb{E}_\xi Z_n(\cdot)}{\mathbb{E}_\xi Z_n(\mathbb{R})}}$, $ {\frac{\mathbb{E} Z_n(\cdot)}{\mathbb{E} Z_n(\mathbb{R})}}$ and $ {\mathbb{E}\frac{Z_n(\cdot)}{\mathbb{E}_\xi Z_n(\mathbb{R})}}$; in Section \ref{LTS10}, we state a central limit theorem and a local limit theorem for $\frac{Z_n(\cdot)}{Z_n(\mathbb{R})}$, which  are proved in Section \ref{LTS11}; in Section \ref{LTS12}, we show central limit theorems for $\mathbb{E}_\xi\frac{Z_n(\cdot)}{Z_n(\mathbb{R})}$ and $\mathbb{E}\frac{Z_n(\cdot)}{Z_n(\mathbb{R})}$.

\section{Large deviations for $ {\mathbb{E}_\xi Z_n(n\cdot)}$, $ {\mathbb{E}Z_n(n\cdot)}$ and $ {\mathbb{E}\frac{Z_n(n\cdot)}
{\mathbb{E}_\xi Z_n(\mathbb{R})}}$}\label{LTS2}
\noindent To study large deviations of $Z_n$, we begin with the  study of its  quenched and annealed means. For $n\in \mathbb{N}$ and $t\in \mathbb{R}$, let
\begin{equation}
m_n(t) := \mathbb{E}_\xi\int e^{itx}X_n(dx)=  \mathbb{E}_\xi \sum_{i=1}^{N(u)} e^{tL_i(u)} \quad(u\in\mathbb{T}_n),
\end{equation}
be the Laplace transform of the counting measure describing the
evolution of the system at time $n$. In particular,
$$ m_0 (t) =  \mathbb{E}_\xi \sum_{i=1}^{N} e^{tL_{i}}, \quad m_0(0) =  \mathbb{E}_\xi N=m_0. $$
We assume that
\begin{equation} \label{H1}
 E|L_1| < \infty, \;  \mathbb{E}|\log m_0 (t)|<\infty \mbox{ and } \mathbb{E}|\frac{m_0' (t)}{m_0 (t)} |<\infty
\end{equation}
for all $t\in \mathbb{R}$. The last two moment conditions imply  that
$$\Lambda (t) := \mathbb{E}\log m_0 (t)  \mbox{ and }  \Lambda' (t) : =  \mathbb{E} \frac{m_0' (t)}{m_0 (t)}   $$
are well defined as real numbers, that $\Lambda (t)$
is differentiable everywhere on $\mathbb{R}$ with $\Lambda' (t)$ as its derivative (this can be easily verified by the dominated convergence theorem, using the fact that the function $t \mapsto  \frac{m_0' (t)}{m_0 (t)} $ is increasing).
Let
\begin{equation*}
\Lambda^* (x) = \sup_{t\in \mathbb{R}} \{ xt -  \Lambda (t)\}
\end{equation*}
be the Legendre transform of $\Lambda$. Then  \begin{equation*}
 \Lambda^* (x) = \left \{
   \begin{array}{lll}
          t\Lambda'(t) - \Lambda (t)  & if & x= \Lambda'(t) \mbox{ for some } t\in
          \mathbb{R},  \\
         +\infty  & if & x\geq\Lambda '(+\infty) \mbox{ or } x\leq\Lambda
         '(-\infty), \\
   \end{array}
     \right.
\end{equation*}
and
$$ \min_x \Lambda^* (x) =  \Lambda^* (\Lambda'(0)) = -\Lambda (0) = -\mathbb{E}\log m_0 <0.$$

With these notations, now we can state our large deviation principle  for the quenched means $\mathbb{E}_\xi Z_n(n\cdot)$, which will leads to a large deviation principle about $Z_n(n\cdot)$.

\begin{thm}[Large deviation principle for quenched means $\mathbb{E}_\xi Z_n(n\cdot)$]\label{LTT2.1.1} Assume (\ref{H1}).
For almost every $\xi$, the sequence of finite measures $A \mapsto
 \mathbb{E}_\xi Z_n (nA)$ satisfies a principle of large deviation with rate
function $\Lambda^* $: for each measurable subset $A$ of $\mathbb{R}$,
\begin{eqnarray*}\label{LDP1}
   - \inf_{x\in A^o} \Lambda^* (x)
   &\leq& \liminf_{n\rightarrow \infty}
             \frac{1}{n}  \log  \mathbb{E}_\xi Z_n (nA) \\
  & \leq &  \limsup_{n\rightarrow \infty}
            \frac{1}{n} \log  \mathbb{E}_\xi Z_n (nA)
   \leq - \inf_{x\in \bar A}  \Lambda^*(x), \\
\end{eqnarray*}
where $A^o$ denotes the interior of $A$, and $\bar A$ its closure.
\end{thm}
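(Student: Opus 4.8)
The plan is to apply the G\"artner--Ellis theorem to the normalized probability measures $\nu_n := \mathbb{E}_\xi Z_n(n\,\cdot)/P_n$ and then transfer the resulting large deviation principle to $\mathbb{E}_\xi Z_n(n\,\cdot)$ using that $\frac1n\log P_n\to\Lambda(0)$. The starting point is the first-moment (many-to-one) identity: by the branching property and the conditional independence of the vectors $\{X(u)\}$ under $\mathbb{P}_\xi$, an easy induction on $n$ gives
$$\int_{\mathbb{R}}e^{tx}\,\mathbb{E}_\xi Z_n(dx)=\mathbb{E}_\xi\sum_{u\in\mathbb{T}_n}e^{tS_u}=\prod_{i=0}^{n-1}m_i(t)\qquad(t\in\mathbb{R}).$$
Consequently, writing $L_n(t,\xi):=\log\int e^{tx}\,\mathbb{E}_\xi Z_n(dx)=\sum_{i=0}^{n-1}\log m_i(t)$, the scaled logarithmic moment generating function of $\nu_n$ is
$$\frac1n\log\int_{\mathbb{R}}e^{ntx}\,\nu_n(dx)=\frac1n\sum_{i=0}^{n-1}\log m_i(t)-\frac1n\sum_{i=0}^{n-1}\log m_i.$$

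Since $\xi$ is stationary and ergodic and $\mathbb{E}|\log m_0(t)|<\infty$ for every $t$ by (\ref{H1}), Birkhoff's ergodic theorem gives, for each fixed $t\in\mathbb{R}$, that $\frac1n\sum_{i=0}^{n-1}\log m_i(t)\to\Lambda(t)$ and $\frac1n\sum_{i=0}^{n-1}\log m_i\to\Lambda(0)$ almost surely, hence $\frac1n\log\int e^{ntx}\,\nu_n(dx)\to\Lambda(t)-\Lambda(0)$ a.s. To pass from ``for each fixed $t$'' to ``for all $t$, for a.e.\ $\xi$'', I would invoke convexity: each $m_i(t)=\int e^{tx}\,\mathbb{E}_\xi X^{(u_0|i)}(dx)$ is log-convex in $t$ by H\"older's inequality, so $t\mapsto\frac1n L_n(t,\xi)$ is convex; a sequence of finite convex functions on $\mathbb{R}$ that converges on a dense set converges everywhere, locally uniformly. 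Intersecting the full-measure sets attached to a countable dense set of values of $t$, one gets: for a.e.\ $\xi$, $\frac1n\log\int e^{ntx}\,\nu_n(dx)\to\Lambda(t)-\Lambda(0)$ for every $t\in\mathbb{R}$.

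Fix such a $\xi$. The limit $t\mapsto\Lambda(t)-\Lambda(0)$ is finite on all of $\mathbb{R}$, so $0$ lies in the interior of its effective domain; it is convex, lower semicontinuous, and differentiable on $\mathbb{R}$ with derivative $\Lambda'$ (as recorded before the theorem), hence essentially smooth. The G\"artner--Ellis theorem then applies, and $\{\nu_n\}$ satisfies the large deviation principle with rate function equal to the Legendre transform of $t\mapsto\Lambda(t)-\Lambda(0)$, namely $x\mapsto\Lambda^*(x)+\Lambda(0)$. Since $\mathbb{E}_\xi Z_n(nA)=P_n\,\nu_n(A)$ and $\frac1n\log P_n\to\Lambda(0)$, adding $\frac1n\log P_n$ to the upper and lower bounds of this LDP cancels the additive constant $\Lambda(0)$ and yields exactly the asserted bounds with rate function $\Lambda^*$; the exceptional $\xi$-null set does not depend on $A$, because G\"artner--Ellis is a deterministic statement about the (given $\xi$) sequence $L_n(\cdot,\xi)$.

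The routine parts are the first-moment identity and the ergodic theorem; the two points requiring a little care are the simultaneous-in-$t$ almost sure convergence, for which the convexity argument above is the natural tool, and the verification of the essential smoothness hypothesis of G\"artner--Ellis, which here is immediate since $\Lambda$ is finite and differentiable on the whole real line.
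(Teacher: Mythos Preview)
Your proof is correct and follows essentially the same route as the paper: compute the Laplace transform via the many-to-one identity, use the ergodic theorem to get $\frac{1}{n}\sum_{i=0}^{n-1}\log m_i(t)\to\Lambda(t)$, and conclude by G\"artner--Ellis applied to the normalized probability measures. The paper's argument is terser (it simply cites the ergodic theorem and Dembo--Zeitouni, Exercise~2.3.20, without further comment), while you fill in two points the paper leaves implicit: the convexity argument ensuring that the null set on which ergodic convergence fails can be taken independent of $t$ (so that the resulting LDP holds for a.e.\ $\xi$ uniformly in $A$), and the verification that $\Lambda$ is finite and differentiable on all of $\mathbb{R}$ so that the differentiability hypothesis of G\"artner--Ellis is met.
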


\begin{proof}[Proof]
Notice that the measures  $q_n (\cdot) = \mathbb{E}_\xi Z_n(\cdot)$ satisfy
\begin{equation*}
  \tilde {q}_n (t) := \int e^{tx} q_n (dx) =\mathbb{ E}_\xi \sum_{u\in\mathbb{T}_n}
  e^{t S_u} = m_0 (t) ...m_{n-1} (t).
\end{equation*}
By the ergodic theorem,
\begin{equation*}
   \lim_{n\rightarrow \infty}  \frac{1}{n} \log  \tilde {q}_n (t) =
   \Lambda (t) := \mathbb{E}\log m_0 (t) \quad \mbox{ a.s.. }
\end{equation*}
Therefore, applying the G\"artner-Ellis theorem (\cite{z}, p.53, Exercise 2.3.20) to the sequence of normalized
probability measures $q_n(n\cdot)/q_n(\mathbb{R})$, we obtain the desired result.
\end{proof}

If the environment  is $i.i.d.$, similar results can be established for annealed means. Let
 $$\Lambda_a (t) = \log \mathbb{E} m_0 (t),$$
 and $\Lambda_a^* $ be its Legendre transform. Then we have:

\begin{thm}[Large deviation principle for annealed means $\mathbb{E}Z_n(n\cdot)$]\label{LTT2.1.2}
Assume that $\xi_n$ are i.i.d..   If $\mathbb{E}m_0(t)\in(0,\infty)$  for all $t\in\mathbb{R}$, then the sequence of finite measures
$A \mapsto  \mathbb{E}Z_n (nA)$ satisfies a principle of large deviation with
rate function $\Lambda_a^* $: for each measurable subset $A$ of
$\mathbb{R}$,
\begin{eqnarray*}\label{LDP2}
   - \inf_{x\in A^o} \Lambda_a^* (x)
   &\leq& \liminf_{n\rightarrow \infty}
             \frac{1}{n}  \log  \mathbb{E}Z_n (nA) \\
  & \leq &  \limsup_{n\rightarrow \infty}
            \frac{1}{n} \log  \mathbb{E}Z_n (nA)
   \leq - \inf_{x\in \bar A}  \Lambda_a^*(x), \\
\end{eqnarray*}
where $A^o$ denotes the interior of $A$, and $\bar A$ its closure.
\end{thm}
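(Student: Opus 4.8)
The plan is to imitate the proof of Theorem~\ref{LTT2.1.1} almost line by line, with the ergodic theorem replaced by an exact factorization coming from the independence of the environment. Write $q_n(\cdot):=\mathbb{E}Z_n(\cdot)$ for the annealed intensity measures. The whole argument reduces to identifying $\lim_n\frac1n\log\int e^{tx}q_n(dx)$ and checking that the G\"artner--Ellis theorem applies; the rest is quoted verbatim from the previous proof.

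First I would compute the Laplace transform of $q_n$. For real $t$ the integrand and the measures are nonnegative, so Tonelli's theorem together with the identity $\mathbb{E}_\xi\sum_{u\in\mathbb{T}_n}e^{tS_u}=m_0(t)\cdots m_{n-1}(t)$ established in the proof of Theorem~\ref{LTT2.1.1} gives
\begin{equation*}
\int e^{tx}\,q_n(dx)=\mathbb{E}\Bigl(\sum_{u\in\mathbb{T}_n}e^{tS_u}\Bigr)=\mathbb{E}\bigl[m_0(t)m_1(t)\cdots m_{n-1}(t)\bigr].
\end{equation*}
Each factor $m_i(t)=\mathbb{E}_\xi\int e^{tx}X_i(dx)$ is a deterministic function of $\xi_i$ only; since the $\xi_i$ are i.i.d., the variables $m_0(t),\dots,m_{n-1}(t)$ are i.i.d., so the expectation of their product factorizes and
\begin{equation*}
\int e^{tx}\,q_n(dx)=\bigl(\mathbb{E}m_0(t)\bigr)^n,\qquad\text{hence}\qquad\frac1n\log\int e^{tx}\,q_n(dx)=\log\mathbb{E}m_0(t)=\Lambda_a(t)
\end{equation*}
for \emph{every} $n$ --- here the convergence is an exact identity and no averaging is needed. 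By hypothesis $\mathbb{E}m_0(t)\in(0,\infty)$ for all $t$, so $\Lambda_a$ is real-valued on all of $\mathbb{R}$; it is also convex (by H\"older's inequality, as for any logarithmic moment generating function) and differentiable, since the elementary bound $|x|e^{tx}\le e^{(t-1)x}+e^{(t+1)x}$ yields $\mathbb{E}\sum_i|L_i|e^{tL_i}\le\mathbb{E}m_0(t-1)+\mathbb{E}m_0(t+1)<\infty$, which legitimizes differentiating under the expectation.

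It then remains to invoke G\"artner--Ellis. Applying it in the finite-measure form (\cite{z}, p.~53, Exercise~2.3.20) to the sequence $A\mapsto q_n(nA)=\mathbb{E}Z_n(nA)$, whose normalized log-Laplace transform at speed $n$ is exactly $\Lambda_a$ by the display above, produces the large deviation principle with rate function $\Lambda_a^*(x)=\sup_t\{xt-\Lambda_a(t)\}$, and the explicit form of $\Lambda_a^*$ analogous to the one displayed for $\Lambda^*$ follows from the differentiability of $\Lambda_a$; equivalently one may apply G\"artner--Ellis to the probability measures $q_n(n\cdot)/q_n(\mathbb{R})$ exactly as in Theorem~\ref{LTT2.1.1} and multiply back by $q_n(\mathbb{R})=(\mathbb{E}m_0)^n=e^{n\Lambda_a(0)}$, which shifts the rate by $\Lambda_a(0)$ and again produces $\Lambda_a^*$. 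I do not expect a serious obstacle: the i.i.d. assumption is used only to turn the ergodic average of the previous proof into the exact product $(\mathbb{E}m_0(t))^n$, and the single point needing a moment's care is the verification of the G\"artner--Ellis regularity hypotheses for $\Lambda_a$, which is handled by the convexity and everywhere-finiteness noted above.
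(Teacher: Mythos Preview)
Your proposal is correct and follows essentially the same route as the paper: set $q_n(\cdot)=\mathbb{E}Z_n(\cdot)$, compute $\int e^{tx}q_n(dx)=(\mathbb{E}m_0(t))^n$ using the i.i.d.\ assumption, and invoke G\"artner--Ellis exactly as in Theorem~\ref{LTT2.1.1}. The paper's own proof is only two lines and omits the regularity checks you spell out, so your version is if anything more complete.
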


\noindent \textbf{Remark.}  It is easy to see that
$$\Lambda_a (t) \geq \Lambda (t) \quad \mbox{ and } \quad \Lambda_a^* (x) \leq \Lambda^* (x).$$
 \\*

\begin{proof}[Proof of Theorem \ref{LTT2.1.2}]
The proof is similar to that of Theorem \ref{LTT2.1.1}, with
 $q_n (\cdot) = \mathbb{E }Z_n(\cdot)$. Notice  that  when $\xi_n$ are i.i.d.,
\begin{equation*}
  \tilde {q}_n (t) := \int e^{tx} q_n (dx) = \mathbb{E}\sum_{u\in\mathbb{T}_n}
  e^{t S_u} =\left( \mathbb{E}m_0 (t) \right)^n.
\end{equation*}
\end{proof}

If we consider the measures $\mathbb{E}\frac{Z_n(\cdot)}{ \mathbb{E}_\xi Z_n(\mathbb{R})}$ instead of $\frac{ \mathbb{E}Z_n(\cdot)}{ \mathbb{E}Z_n(\mathbb{R})}$,
we can obtain another large deviation principle.

\begin{thm}[Large deviation principle for $\mathbb{E}\frac{Z_n(n\cdot)}{ \mathbb{E}_\xi Z_n(\mathbb{R})}$]\label{LTT2.1.3}
Assume that $\xi_n$ are i.i.d..  Let
 $\bar{\Lambda}_a (t) = \log \mathbb{E}\frac{m_0 (t)}{m_0}$
 and $\bar{\Lambda}_a^* $ be its Legendre transform.  If $\mathbb{E}\frac{m_0(t)}{m_0}\in(0,\infty)$ for all $t\in\mathbb{R}$, then the sequence of finite measures
$A \mapsto \mathbb{E}\frac{Z_n(nA)}{ \mathbb{E}_\xi Z_n(\mathbb{R})}$ satisfies a principle of large deviation with
rate function $\bar{\Lambda}_a^* $: for each measurable subset $A$ of
$\mathbb{R}$,
\begin{eqnarray*}\label{LDP3}
   - \inf_{x\in A^o} \bar{\Lambda}_a^*  (x)
   &\leq& \liminf_{n\rightarrow \infty}
             \frac{1}{n}  \log \mathbb{E}\frac{Z_n(nA)}{ \mathbb{E}_\xi Z_n(\mathbb{R})} \\
  & \leq &  \limsup_{n\rightarrow \infty}
            \frac{1}{n} \log \mathbb{E}\frac{Z_n(nA)}{ \mathbb{E}_\xi Z_n(\mathbb{R})}
   \leq - \inf_{x\in \bar A}  \bar{\Lambda}_a^*(x), \\
\end{eqnarray*}
where $A^o$ denotes the interior of $A$, and $\bar A$ its closure.
\end{thm}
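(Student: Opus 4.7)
The plan is to follow the same template as the proofs of Theorems \ref{LTT2.1.1} and \ref{LTT2.1.2}: identify the finite measures $q_n(\cdot)=\mathbb{E}\frac{Z_n(\cdot)}{\mathbb{E}_\xi Z_n(\mathbb{R})}$, compute the asymptotic rate of their Laplace transforms $\tilde q_n(t)=\int e^{tx}q_n(dx)$, and then feed this into the version of the G\"artner--Ellis theorem (Dembo--Zeitouni, Exercise 2.3.20) already invoked above, applied to the normalized probability measures $q_n(n\cdot)/q_n(\mathbb{R})$.

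The key computation is explicit. By Fubini and the tower property conditioning on $\mathcal{F}_0=\sigma(\xi)$,
$$\tilde q_n(t)=\mathbb{E}\,\frac{\mathbb{E}_\xi\sum_{u\in\mathbb{T}_n}e^{tS_u}}{\mathbb{E}_\xi Z_n(\mathbb{R})}=\mathbb{E}\prod_{i=0}^{n-1}\frac{m_i(t)}{m_i}.$$
Because each ratio $m_i(t)/m_i$ is a measurable function of $\xi_i$ only, and the $\xi_i$ are i.i.d., the factors in the product are independent and identically distributed. The assumption $\mathbb{E}[m_0(t)/m_0]\in(0,\infty)$ for all $t\in\mathbb{R}$ then gives
$$\tilde q_n(t)=\Bigl(\mathbb{E}\tfrac{m_0(t)}{m_0}\Bigr)^n,\qquad \frac{1}{n}\log\tilde q_n(t)=\bar\Lambda_a(t)$$
identically in $n$, which is a much stronger statement than the almost sure convergence used in Theorems \ref{LTT2.1.1} and \ref{LTT2.1.2}.

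With this exact identity in hand, applying G\"artner--Ellis to $q_n(n\cdot)/q_n(\mathbb{R})$ directly yields the stated upper and lower bounds with rate function $\bar\Lambda_a^{*}$, the Legendre transform of $\bar\Lambda_a$; convexity of $\bar\Lambda_a$, which is needed for the theorem to apply, is a consequence of H\"older's inequality applied to the exponential family $t\mapsto\mathbb{E}[m_0(t)/m_0]$. There is no serious obstacle in this proof: the only thing to check carefully is that the interchange of $\mathbb{E}$ and the product over $i$ is legitimate, which reduces to the i.i.d. assumption on $(\xi_n)$ together with the hypothesis $\mathbb{E}[m_0(t)/m_0]<\infty$ for every $t$. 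In spirit the argument is a one-line modification of the proof of Theorem \ref{LTT2.1.2}, simply replacing $\mathbb{E}m_0(t)$ by $\mathbb{E}[m_0(t)/m_0]$ throughout.
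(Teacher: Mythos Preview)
Your proposal is correct and follows exactly the same approach as the paper: define $q_n(\cdot)=\mathbb{E}\frac{Z_n(\cdot)}{\mathbb{E}_\xi Z_n(\mathbb{R})}$, compute its Laplace transform as $\bigl(\mathbb{E}\frac{m_0(t)}{m_0}\bigr)^n$ using the i.i.d.\ structure of $(\xi_n)$, and apply the G\"artner--Ellis theorem. The paper's proof is in fact more terse than yours, stating only the Laplace-transform identity and pointing back to Theorem~\ref{LTT2.1.1}.
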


\begin{proof}[Proof]
The proof is still similar to that of Theorem \ref{LTT2.1.1}, with  $q_n (\cdot) = \mathbb{E }\frac{Z_n(\cdot)}{\mathbb{E}_\xi Z_n(\mathbb{R})}$ whose Laplace transform is
\begin{equation*}
  \tilde {q}_n (t) := \int e^{tx} q_n (dx) =  \left( \mathbb{E}\frac{m_0 (t)}{m_0} \right)^n.
\end{equation*}
\end{proof}

\section{Convergence of the free energy; large deviations for $ {Z_n(n\cdot)}$; positions of rightmost and leftmost particles}\label{LTS3}
Now we consider large deviations for the sequence of measures $ \{Z_n(n\cdot)\}$.
Let
\begin{equation}
\tilde Z_n (t) : = \int e^{tx}Z_n (dx)  = \sum_{u\in\mathbb{T}_n}
 e^{tS_u}
 \end{equation}
 be the Laplace transform of $Z_n$, also called \emph{partition
 function} by physicians. We are interested in the convergence of the  \emph{free energy} $
\frac{\log \tilde Z_n(t)}{n}$. To this end we define two critical values $t_-$ and $t_+$.   Let
$$ \rho (t) = t\Lambda'(t) - \Lambda (t), \qquad t\in \mathbb{R}. $$
Notice that $ \rho'(t) = t\Lambda''(t)$. Therefore
 $\rho (t)$ decreases on $(-\infty,0]$, increases on $[0,\infty)$, and attains
 its minimum at $0$:
 $$ \min_t \rho (t) = \rho (0) = - \Lambda (0) <0.$$
Let
$$ t_-= \inf \{ t \in \mathbb{R}: t\Lambda'(t) - \Lambda (t) \leq 0\}, $$
$$t_+= \sup \{ t \in \mathbb{R}: t\Lambda'(t) - \Lambda (t) \leq 0\}. $$
Then $ -\infty \leq t_- <0 < t_+ \leq \infty$, $t_-$ and $t_+$ are
two solutions of $t\Lambda'(t) - \Lambda (t) = 0$ if they are
finite.
For simplicity, we also assume that
\begin{equation}
N\geq1\qquad a.s.,
 \end{equation}
so that $Z_n(\mathbb{R})\rightarrow\infty$ a.s..

\begin{thm}[Convergence of the free energy]\label{LTT2.1.4}
It is a.s.  that for all $t\in\mathbb{R}$,
\begin{equation}\label{LTE3.3}
\lim_{n\rightarrow \infty} \frac{\log \tilde Z_n(t)}{n} =
\tilde\Lambda (t) :=
   \left\{
   \begin{array} {lll}
          \Lambda (t)  & if &   t\in (t_-, t_+) ,\\
            t\Lambda'(t_+) & if & t\geq t_+, \\
            t\Lambda'(t_-) & if & t\leq t_-.
   \end{array}
     \right.
\end{equation}
\end{thm}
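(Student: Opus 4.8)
The plan is to establish matching upper and lower bounds for $\frac{1}{n}\log\tilde Z_n(t)$, treating separately the central regime $t\in(t_-,t_+)$ and the boundary regimes $t\geq t_+$ or $t\leq t_-$. For the upper bound, I would use the first-moment (Markov) method: since $\mathbb{E}_\xi\tilde Z_n(t)=\tilde q_n(t)=m_0(t)\cdots m_{n-1}(t)$, the ergodic theorem gives $\frac{1}{n}\log\mathbb{E}_\xi\tilde Z_n(t)\to\Lambda(t)$ a.s., and a Borel--Cantelli argument (applied along a countable dense set of $t$, then extended by monotonicity/convexity of $t\mapsto\tilde Z_n(t)$ and of the limit) yields $\limsup_n\frac{1}{n}\log\tilde Z_n(t)\leq\Lambda(t)$ a.s. for all $t$. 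This already gives the correct answer in the central regime once the lower bound is in place; for the boundary regimes one needs the sharper bound $\tilde\Lambda(t)=t\Lambda'(t_{\pm})<\Lambda(t)$, which I would obtain by splitting $\tilde Z_n(t)$ according to the position $S_u$: writing $\tilde Z_n(t)=\sum_u e^{tS_u}$ and using $Z_n(nA)$ large-deviation control, the dominant contribution for, say, $t\geq t_+$ comes from particles near $S_u\approx n\Lambda'(t_+)$, because for $x>\Lambda'(t_+)$ the exponential rate $tx-\Lambda^*(x)$ is still beaten by the fact that $Z_n(n[x,\infty))$ is eventually empty (rightmost particle travels at speed $\Lambda'(t_+)$). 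Concretely, $\limsup_n\frac1n\log Z_n(nA)\le-\inf_{\bar A}\Lambda^*$ a.s.\ (a quenched consequence of Theorem 2.1 together with the Borel--Cantelli/martingale argument showing $Z_n(nA)$ does not exceed its mean by an exponential factor), so $\sup_u S_u/n\to\Lambda'(t_+)$ and the upper bound follows by discretizing $A$ into small intervals.

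For the lower bound in the central regime, the standard tool is a second-moment or martingale argument: for $t\in(t_-,t_+)$ one introduces the Biggins-type martingale $\tilde Z_n(t)/\tilde q_n(t)$ (or its annealed normalization), shows it is bounded in $L^1$ or converges to a strictly positive limit on the survival event, and concludes $\frac1n\log\tilde Z_n(t)\ge\Lambda(t)+o(1)$ a.s. The condition $t\in(t_-,t_+)$ is exactly what guarantees $\rho(t)=t\Lambda'(t)-\Lambda(t)<0$, which is the criterion for non-degeneracy of this martingale (the "$\mathbb{E}\log m_0$" supercriticality condition of (1.2) is the case $t=0$); here I would invoke a companion result on $L^p$ or a.s.\ convergence of $\tilde Z_n(t)/\tilde q_n(t)$ for $t$ in this range, proved by the same Athreya--Karlin / branching-in-random-environment machinery already cited. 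For $t\geq t_+$, the lower bound $\frac1n\log\tilde Z_n(t)\ge t\Lambda'(t_+)-\epsilon$ comes from retaining just the particles with $S_u\ge n(\Lambda'(t_+)-\delta)$: by the law of large numbers for the rightmost particle (or directly from the $t=t_+$ martingale being non-degenerate, since $\rho(t_+)=0$ is the borderline case where $\tilde Z_n(t_+)/\tilde q_n(t_+)$ still has a.s.\ positive $\limsup$, or more robustly from a lower large-deviation bound $\liminf_n\frac1n\log Z_n(n(x-\delta,x+\delta))\ge-\Lambda^*(x)$ valid for $x<\Lambda'(t_+)$), there are $\ge e^{n(\Lambda(t_+)-t_+\delta-\epsilon)}$ such particles, each contributing $e^{tS_u}\ge e^{tn(\Lambda'(t_+)-\delta)}$; optimizing over small $\delta$ and using $\Lambda(t_+)=t_+\Lambda'(t_+)$ gives the claim. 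The case $t\le t_-$ is symmetric.

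The main obstacle is the boundary regime $t\geq t_+$ (and symmetrically $t\le t_-$): the naive first-moment bound overshoots, so one genuinely needs to know that $Z_n$ has no particles travelling faster than speed $\Lambda'(t_+)$, i.e. a sharp upper law of large numbers for $R_n/n$. This in turn requires the quenched upper large-deviation estimate for $Z_n(nA)$ — controlling not just $\mathbb{E}_\xi Z_n(nA)$ (Theorem 2.1) but $Z_n(nA)$ itself — which is delivered by a Borel--Cantelli argument: $\mathbb{P}_\xi(Z_n(nA)\ge e^{n(-\inf_{\bar A}\Lambda^*+\epsilon)})\le e^{-n\epsilon/2}\,\mathbb{E}_\xi Z_n(nA)e^{n(\inf_{\bar A}\Lambda^*-\epsilon/2)}$ is summable after taking expectations and using the ergodic convergence of $\frac1n\log\tilde q_n$, so the corresponding event occurs finitely often a.s.\ under $\mathbb{P}$, hence for $\tau$-a.e.\ $\xi$ under $\mathbb{P}_\xi$. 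Assembling these pieces — with care about the null sets (take a countable dense set of parameters $t,x,\delta$ and use monotonicity and continuity of $\tilde\Lambda$ to pass to all $t$) — completes the proof.
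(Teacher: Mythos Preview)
Your architecture matches the paper's: central regime via the Biggins martingale $W_n(t)=\tilde Z_n(t)/\tilde q_n(t)$, boundary regime via control of $R_n/n$ (your Borel--Cantelli argument for $\limsup R_n/n\le\Lambda'(t_+)$ is exactly the paper's Lemma~\ref{LTL2.3.3}). The paper's execution in the boundary regime is cleaner than your discretization/particle-counting, however. For the upper bound at $t\ge t_+$ it uses the one-line estimate $tS_u\le t_0 S_u+(t-t_0)R_n$ for $0<t_0<t_+$, giving $\tilde Z_n(t)\le\tilde Z_n(t_0)e^{(t-t_0)R_n}$; combining the central-regime result at $t_0$ with $\limsup R_n/n\le\Lambda'(t_+)$ and letting $t_0\uparrow t_+$ finishes. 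For the lower bound at $t\ge t_+$ it exploits convexity of $t\mapsto\log\tilde Z_n(t)$: the three-point inequality for $t_-<t_0<t_1<t_+\le t$ together with the central-regime result at $t_0,t_1$ gives $\liminf\frac1n\log\tilde Z_n(t)\ge\Lambda(t_0)+(t-t_0)\Lambda'(t_0)$ after $t_1\downarrow t_0$, and then $t_0\uparrow t_+$. Your particle-counting can be made to work, but your stated count $e^{n(\Lambda(t_+)-t_+\delta)}$ is wrong: the number of particles with $S_u\ge n(\Lambda'(t_+)-\delta)$ is of order $e^{-n\Lambda^*(\Lambda'(t_+)-\delta)}\approx e^{nt_+\delta}$, which after multiplying by $e^{tn(\Lambda'(t_+)-\delta)}$ still yields $\liminf\ge t\Lambda'(t_+)-(t-t_+)\delta\to t\Lambda'(t_+)$. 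Also, for the upper bound $\limsup\frac1n\log\tilde Z_n(t)\le\Lambda(t)$ the paper simply uses that $W_n(t)\to W(t)<\infty$ a.s.; no Borel--Cantelli is needed.

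There is one genuine gap in your central-regime lower bound. You plan to invoke non-degeneracy of $W(t)=\lim W_n(t)$ for $t\in(t_-,t_+)$, but the cited results (Biggins--Kyprianou, Kuhlbusch; the paper's Lemma~\ref{LTL2.3.1}) require $\mathbb{E}W_1(t)\log^+W_1(t)<\infty$, which is \emph{not} implied by the standing hypotheses (\ref{LTE1.2}) and (\ref{H1}); the $L\log L$ condition in (\ref{LTE1.2}) is only the case $t=0$. The paper fills this by a truncation: replace each $N(u)$ by $N(u)\wedge c$, verify that the truncated BRWRE has $t\Lambda_c'(t)-\Lambda_c(t)<0$ for $c$ large (using $\Lambda_c\uparrow\Lambda$, which needs $\mathbb{E}|L_1|<\infty$ to dominate $\log^-m_0^c(t)$) and that $\mathbb{E}W_1^c(t)\log^+W_1^c(t)<\infty$ (via a size-biasing/Jensen argument), apply Lemma~\ref{LTL2.3.1} to get $\liminf\frac1n\log\tilde Z_n(t)\ge\liminf\frac1n\log\tilde Z_n^c(t)=\Lambda_c(t)$, and let $c\uparrow\infty$. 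Without this step your lower bound is not established under the paper's assumptions.
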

For the deterministic environment case, see Chauvin \&  Rouault
(1997, \cite{chauvin}) and Franchi (1995, \cite{franchi}).

Let $\tilde\Lambda^*(x)$ be the Legendre transform of
$\tilde\Lambda(t)$.  By Theorem \ref{LTT2.1.4} and the G\"artner- Ellis' theorem, we
 immediately obtain the following large deviation principe for $Z_n(n\cdot)$.

\begin{thm}[Large deviation principle for $Z_n(n\cdot)$]\label{LTT2.1.5}
It is a.s.   that the sequence of finite measures
$A \mapsto  {Z_n(nA)}$ satisfies a principle of large deviation with
rate function $\tilde{\Lambda}^* $: for each measurable subset $A$ of
$\mathbb{R}$,
\begin{eqnarray*}\label{LDP1}
   - \inf_{x\in A^o} \tilde\Lambda^* (x)
   &\leq& \liminf_{n\rightarrow \infty}
             \frac{1}{n}  \log Z_n (nA) \\
  & \leq &  \limsup_{n\rightarrow \infty}
            \frac{1}{n} \log   Z_n (nA)
   \leq - \inf_{x\in \bar A}  \tilde\Lambda^*(x), \\
\end{eqnarray*}
where $A^o$ denotes the interior of $A$, and $\bar A$ its closure.
\end{thm}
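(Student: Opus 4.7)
The plan is to deduce this directly from Theorem \ref{LTT2.1.4} by mimicking the G\"artner--Ellis argument used to prove Theorem \ref{LTT2.1.1}, with $\frac{1}{n}\log\tilde Z_n(t)$ (random, but a.s.\ convergent by Theorem \ref{LTT2.1.4}) playing the role of $\frac{1}{n}\log\tilde q_n(t)$.

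First, I would reduce to probability measures. Setting $\mu_n(\cdot)=Z_n(n\cdot)/Z_n(\mathbb{R})$, the change of variables $y=nx$ gives
$$\int e^{ntx}\,\mu_n(dx)=\frac{\tilde Z_n(t)}{Z_n(\mathbb{R})},\qquad\text{so}\qquad \frac{1}{n}\log\!\int\! e^{ntx}\,\mu_n(dx)=\frac{1}{n}\log\tilde Z_n(t)-\frac{1}{n}\log Z_n(\mathbb{R}).$$
Since $0\in(t_-,t_+)$, Theorem \ref{LTT2.1.4} at $t$ and at $0$ yields, on a common event of full probability, convergence to $\tilde\Lambda(t)-\tilde\Lambda(0)=\tilde\Lambda(t)-\Lambda(0)$ for every $t\in\mathbb{R}$. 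I would then invoke the G\"artner--Ellis theorem on this event with limiting log-MGF $\Phi(t):=\tilde\Lambda(t)-\Lambda(0)$; the resulting rate function is $\Phi^*(x)=\tilde\Lambda^*(x)+\Lambda(0)$. To pass back from $\mu_n$ to the finite measures $Z_n(n\cdot)=Z_n(\mathbb{R})\,\mu_n(\cdot)$, I would use again that $\frac{1}{n}\log Z_n(\mathbb{R})\to\Lambda(0)$ a.s., which cancels the additive constant in $\Phi^*$ and leaves exactly the rate function $\tilde\Lambda^*$.

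The only step requiring care is the verification of the hypotheses of G\"artner--Ellis for $\tilde\Lambda$, in particular essential smoothness, needed for the lower bound on open sets. By the explicit formula (\ref{LTE3.3}), $\tilde\Lambda$ coincides with the smooth convex $\Lambda$ on $(t_-,t_+)$ and is affine with slopes $\Lambda'(t_\pm)$ outside, so $\tilde\Lambda$ is convex, finite on all of $\mathbb{R}$, and $C^1$ across the junctions at $t_\pm$ because the slopes match by construction. This gives essential smoothness whenever $t_\pm$ are finite; when $t_+=\infty$ (resp.\ $t_-=-\infty$) the function $\tilde\Lambda$ simply equals $\Lambda$ on the corresponding half-line and the same conclusion holds. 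The main (and essentially only) obstacle is precisely this smoothness check at the kinks, together with ensuring that the exceptional null set in Theorem \ref{LTT2.1.4} can be chosen uniformly in $t$; the latter is already guaranteed by the phrasing of Theorem \ref{LTT2.1.4}, so the proof is indeed immediate.
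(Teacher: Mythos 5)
Your proposal is correct and is exactly the paper's approach: the paper states only that Theorem \ref{LTT2.1.5} follows immediately from Theorem \ref{LTT2.1.4} and the G\"artner--Ellis theorem, and your write-up fills in the routine details (normalizing by $Z_n(\mathbb{R})$, using $\tilde\Lambda(0)=\Lambda(0)$ to shift and un-shift the rate function, and checking that $\tilde\Lambda$ is convex and $C^1$ across $t_\pm$). The observation that the null set is already uniform in $t$ by the phrasing of Theorem \ref{LTT2.1.4} is the right thing to note, and the paper's own proof of that theorem secures this at the end of Section \ref{LTS4}.
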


\noindent \textbf{Remark.}   It  can be seen that $\tilde\Lambda(t)\leq\Lambda(t)$, so that $\tilde\Lambda^*(x)\geq\Lambda^*(x)$. Moreover,
  \begin{equation*}
 \tilde\Lambda^* (x) = \left \{
   \begin{array}{lll}
         \Lambda^* (x) & if & x\in [\Lambda'(t_-) , \Lambda' (t_+)],  \\
         +\infty  & if & x<\Lambda '(t_-) \mbox{ or } x>\Lambda
         '(t_+), \\
   \end{array}
     \right.
\end{equation*}

\begin{co}\label{LTC2.1.6}
It is a.s.   that
$$ \lim_{n \rightarrow\infty} \frac {1}{n}  \log Z_n [nx, \infty)
 = -\Lambda^*(x) > 0 \mbox{ if } x \in (\Lambda'(0),
 \Lambda'(t_+)), $$
$$ \lim_{n \rightarrow\infty} \frac {1}{n}  \log Z_n (-\infty, nx]
 = -\Lambda^*(x) > 0 \mbox{ if } x \in (\Lambda'(t_-), \Lambda'(0)).$$
\end{co}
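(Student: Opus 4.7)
The plan is to deduce this corollary by applying the large deviation principle of Theorem \ref{LTT2.1.5} to the sets $A=[x,\infty)$ and $A=(-\infty,x]$, combined with the explicit description of $\tilde\Lambda^*$ from the remark following that theorem. The main analytic task is to show that, for $x$ in the ranges specified, the infima of $\tilde\Lambda^*$ over $\bar A$ and over $A^o$ coincide, so that the $\liminf$ and $\limsup$ bounds pinch to a common limit.

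Take the right tail first. Being a Legendre transform, $\Lambda^*$ is convex on $\mathbb{R}$ and continuous on the interior of the interval where it is finite; it attains its minimum $-\Lambda(0)<0$ at $\Lambda'(0)$ and is therefore nondecreasing on $[\Lambda'(0),\infty)$. Since the remark gives $\tilde\Lambda^*=\Lambda^*$ on $[\Lambda'(t_-),\Lambda'(t_+)]$ and $\tilde\Lambda^*\equiv+\infty$ beyond, and since $\Lambda'(0)<x<\Lambda'(t_+)$, monotonicity yields
$$\inf_{y\in[x,\infty)}\tilde\Lambda^*(y)\;=\;\min_{y\in[x,\Lambda'(t_+)]}\Lambda^*(y)\;=\;\Lambda^*(x),$$
and by continuity of $\Lambda^*$ on the relevant interval the same value is obtained for $\inf_{y\in(x,\infty)}\tilde\Lambda^*(y)$. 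Theorem \ref{LTT2.1.5} therefore gives $\frac{1}{n}\log Z_n[nx,\infty)\to-\Lambda^*(x)$ almost surely.

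To verify the strict inequality $\Lambda^*(x)<0$, I use the two boundary values: $\Lambda^*(\Lambda'(0))=-\Lambda(0)<0$ is the global minimum, while $\Lambda^*(\Lambda'(t_+))=t_+\Lambda'(t_+)-\Lambda(t_+)=0$ by the definition of $t_+$ as a root of $t\Lambda'(t)-\Lambda(t)=0$. Writing $x$ as a convex combination of $\Lambda'(0)$ and $\Lambda'(t_+)$ and applying convexity of $\Lambda^*$ gives $\Lambda^*(x)<0$, hence $-\Lambda^*(x)>0$. The left-tail statement is obtained by the same argument applied to $A=(-\infty,x]$, after replacing \textquotedblleft nondecreasing on $[\Lambda'(0),\infty)$\textquotedblright\ by \textquotedblleft nonincreasing on $(-\infty,\Lambda'(0)]$\textquotedblright\ and using $\Lambda^*(\Lambda'(t_-))=0$.

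The only delicate point in the whole argument is the matching of the two infima so that the LDP actually delivers a limit rather than an inequality; this is precisely why the strict inequality $x<\Lambda'(t_+)$ (respectively $x>\Lambda'(t_-)$) appears in the hypothesis, since at $x=\Lambda'(t_+)$ the infimum over $A^o$ would jump to $+\infty$ while the infimum over $\bar A$ would remain $0$, destroying the conclusion.
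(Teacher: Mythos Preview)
Your proposal is correct and follows exactly the route the paper intends: the corollary is stated without proof, to be read as an immediate consequence of Theorem~\ref{LTT2.1.5} applied to the half-lines $A=[x,\infty)$ and $A=(-\infty,x]$, together with the explicit description of $\tilde\Lambda^*$ in the remark that follows. One small caveat worth patching: your convexity argument for $\Lambda^*(x)<0$ invokes $\Lambda^*(\Lambda'(t_+))=0$, which tacitly assumes $t_+<\infty$; when $t_+=\infty$ (a possibility the paper explicitly allows) you can instead observe directly that $x=\Lambda'(t)$ for some $t\in(0,t_+)$ and then $\Lambda^*(x)=t\Lambda'(t)-\Lambda(t)=\rho(t)<0$ by the monotonicity of $\rho$ on $[0,\infty)$.
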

For deterministic branching random walks, see Biggins (1977, \cite{biggins})  and Chauvin \&  Rouault (1997, \cite{chauvin}).

\noindent \textbf{Remark.}  $$ x \in (\Lambda'(0),
 \Lambda'(t_+)) \mbox{ if and only if } x> \Lambda'(0) \mbox{ and } \Lambda^*(x) <
 0. $$
$$ x \in (\Lambda'(t_-),
 \Lambda'(0))  \mbox{ if and only if } x< \Lambda'(0) \mbox{ and } \Lambda^*(x) <
 0. $$

 If the set $\mathbb{T}_n\neq \emptyset$, let
$$ L_n = \min_{u\in\mathbb{T}_n} S_u   \quad (resp.\quad R_n = \max_{u\in\mathbb{T}_n} S_u)$$
be the position of leftmost (resp. rightmost) particles of generation $n$. We can see that  $L_n $ (resp. $R_n $) satisfies a law of large numbers.

  \begin{thm}[Asymptotic properties of $L_n$ and $R_n$]\label{LTT2.1.7}
 It is a.s.  that
 $$ \lim_{n \rightarrow\infty}\frac{L_n}{n} = \Lambda'(t_-), $$
$$ \lim_{n \rightarrow\infty} \frac{R_n}{n} = \Lambda'(t_+). $$
\end{thm}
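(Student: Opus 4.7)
The plan is to combine the ``interior'' lower bound from Corollary \ref{LTC2.1.6} with an exponential Chebyshev inequality fed by Theorem \ref{LTT2.1.4} for the opposite ``exterior'' bound. I will only write out the argument for $R_n$; the one for $L_n$ is symmetric, working with $t \leq 0$ and the second half of Corollary \ref{LTC2.1.6}.

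\textbf{Lower bound.} Fix any $x \in (\Lambda'(0), \Lambda'(t_+))$. By Corollary \ref{LTC2.1.6},
$$\frac{1}{n}\log Z_n[nx,\infty) \;\longrightarrow\; -\Lambda^*(x) \;>\;0 \quad \text{a.s.,}$$
so $Z_n[nx,\infty) \to \infty$ and in particular $R_n \geq nx$ for all large $n$. Taking $x$ along a countable sequence increasing to $\Lambda'(t_+)$ gives $\liminf_{n\to\infty} R_n/n \geq \Lambda'(t_+)$ almost surely.

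\textbf{Upper bound.} Fix any $x > \Lambda'(t_+)$ (with nothing to prove if $\Lambda'(t_+) = +\infty$). The exponential Chebyshev inequality gives, for every $t \geq 0$,
$$Z_n[nx,\infty) \;\leq\; e^{-ntx}\,\tilde Z_n(t),$$
so by Theorem \ref{LTT2.1.4}
$$\limsup_{n\to\infty} \frac{1}{n}\log Z_n[nx,\infty) \;\leq\; -tx + \tilde\Lambda(t) \quad \text{a.s.}$$
The crucial point is that the right-hand side can be made strictly negative by an appropriate choice of $t$. If $t_+ < \infty$, take $t = t_+$: by the definition of $t_+$ and the formula in Theorem \ref{LTT2.1.4}, $\tilde\Lambda(t_+) = t_+\Lambda'(t_+)$, so the bound equals $-t_+(x - \Lambda'(t_+)) < 0$. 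If $t_+ = +\infty$, use the fact that $\Lambda(t)/t \to \Lambda'(+\infty) = \Lambda'(t_+) < x$ to pick $t$ large enough that $\Lambda(t) < tx$; since then $t < t_+$, Theorem \ref{LTT2.1.4} gives $\tilde\Lambda(t) = \Lambda(t)$, so again $-tx + \tilde\Lambda(t) < 0$. In either case $Z_n[nx,\infty)$ decays exponentially, and being a nonnegative integer it must vanish for all large $n$, i.e.\ $R_n < nx$ eventually. Letting $x$ decrease to $\Lambda'(t_+)$ along a countable dense sequence gives $\limsup R_n/n \leq \Lambda'(t_+)$.

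\textbf{Main obstacle.} The only nonroutine step is verifying that $-tx + \tilde\Lambda(t) < 0$ can be achieved for $x > \Lambda'(t_+)$. This reduces to the identity $\tilde\Lambda(t_+) = t_+\Lambda'(t_+) = \Lambda(t_+)$, which is exactly the defining equation of $t_+$; the case $t_+ = +\infty$ only requires the standard asymptotic $\Lambda(t)/t \to \Lambda'(+\infty)$ coming from convexity of $\Lambda$. No new hypotheses are needed beyond \eqref{H1} and the assumption $N\geq 1$ a.s.\ already in force.
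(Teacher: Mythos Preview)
Your deductions from Theorem \ref{LTT2.1.4} and Corollary \ref{LTC2.1.6} are internally sound, but there is a circularity hidden in the paper's logical order that you should be aware of. In Section \ref{LTS4} the proofs of Theorems \ref{LTT2.1.4} and \ref{LTT2.1.7} are interleaved: the upper bound $\limsup_n R_n/n \leq \Lambda'(t_+)$ is established first (Lemma \ref{LTL2.3.3}), and only then is the free-energy limit for $t\geq t_+$ proved (Lemma \ref{LTL2.3.4}), using that very upper bound as an ingredient. Since your upper-bound argument invokes Theorem \ref{LTT2.1.4} at $t=t_+$ (hence Lemma \ref{LTL2.3.4}), and since Corollary \ref{LTC2.1.6} also rests on the full Theorem \ref{LTT2.1.4} via Theorem \ref{LTT2.1.5}, your proof as written is circular.

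The paper breaks the cycle by proving the upper bound from a different source: for $a>\Lambda'(t_+)$ one has $\Lambda^*(a)>0$, so by the quenched-mean large deviation principle (Theorem \ref{LTT2.1.1}) the first moment $\mathbb{E}_\xi Z_n[na,\infty)$ decays exponentially; then $\sum_n \mathbb{P}_\xi(Z_n[na,\infty)\geq 1)<\infty$ and Borel--Cantelli gives $R_n<na$ eventually. This uses only Theorem \ref{LTT2.1.1}, which is logically prior to and independent of the free energy. Your own upper-bound idea can be salvaged with a one-line change: take $t\in(0,t_+)$ close to $t_+$ rather than $t=t_+$ itself; then only Lemma \ref{LTL2.3.2} (the interior case of Theorem \ref{LTT2.1.4}, proved without reference to $R_n$) is needed, and by continuity $\Lambda(t)-tx\to t_+(\Lambda'(t_+)-x)<0$. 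Once the upper bound is in hand, the full Theorem \ref{LTT2.1.4} becomes available and your lower bound via Corollary \ref{LTC2.1.6} is legitimate; it is essentially the same as the paper's Lemma \ref{LTL2.3.5}, which bounds $\tilde Z_n(t)\leq Z_n(\mathbb{R})e^{tR_n}$ directly rather than routing through the large deviation corollary.
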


For deterministic  branching random walks, see Biggins (1977) and Chauvin \& Rouault (1997).

\section{Proofs of Theorems \ref{LTT2.1.4} and  \ref{LTT2.1.7}}\label{LTS4}
Let us give the proofs of Theorems \ref{LTT2.1.4} and  \ref{LTT2.1.7} which are composed by some lemmas. Similar arguments have been used in Franchi (1995, \cite{franchi}) and Chauvin \& Rouault (1997).

Observe that
$$ W_n(t) := \frac{\tilde Z_n (t)}{\mathbb{E}_\xi {\tilde Z_n (t)}}
= \frac{ \sum_{u\in\mathbb{T}_n} e^{tS_u}}{m_0(t)...m_{n-1}(t)}   $$ is a
martingale, therefore it converges a.s. to a random variable  $W(t) \in
[0,\infty)$. In the deterministic environment case, this martingale has been
studied by Kahane \&  Peyri\`ere (1976), Biggins (1977),
Durrett \& Liggett (1983), Guivarc'h (1990), Lyons (1997) and
Liu (1997, 1998, 2000, 2001), etc. in different contexts.

 The following lemma concerns the non degeneration of $W(t)$.

\begin{lem}\label{LTL2.3.1}
 If $t\in (t_-,t_+)$ and $\mathbb{E}W_1(t)\log^+ W_1(t) <\infty$, then
  $$ W(t) >0 \quad {\mbox  a.s. } $$
 If $t\leq t_-$ or $t\geq t_+$, then
$$ W(t) = 0 \quad {\mbox  a.s. } $$
\end{lem}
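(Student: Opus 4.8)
The plan is to analyze the martingale $W_n(t) = \tilde Z_n(t)/(m_0(t)\cdots m_{n-1}(t))$ separately in the two regimes. For the non-degeneracy part, when $t\in(t_-,t_+)$, I would apply the standard Biggins-type criterion for martingale convergence in $L^1$: since $W_n(t)$ is a non-negative martingale with $\mathbb{E}_\xi W_n(t)=1$, it suffices to show that the family $\{W_n(t)\}$ is uniformly integrable, equivalently that $\mathbb{E} W(t) = 1$, equivalently (by the standard argument specialized to the random environment setting, cf. the quenched $L^1$ theory for the fundamental martingale) that $W_n(t)\to W(t)$ in $L^1(\mathbb{P}_\xi)$ for almost every $\xi$. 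The condition $\mathbb{E} W_1(t)\log^+ W_1(t)<\infty$ is the analogue here of the $\mathbb{E}\frac{N}{m_0}\log^+ N<\infty$ condition used for $W$; combined with $t\in(t_-,t_+)$, which guarantees $\Lambda(t) - t\Lambda'(t) = -\rho(t) < 0$ so that the relevant "spectral radius"/Lyapunov-type quantity governing the many-to-one decomposition lies strictly below the critical value, one gets uniform integrability. Concretely I would set up the size-biased measure along the spine in the random environment, show that under the size-biased law the logarithmic growth rate of the spine's contribution $e^{tS_{\xi_n}}/(m_0(t)\cdots m_{n-1}(t))$ tends to a strictly negative limit by the ergodic theorem (precisely because $t\rho'$ or rather $t\Lambda'(t) - \Lambda(t) = \rho(t) > 0$ strictly for $t$ in the open interval minus $\{0\}$, and handle $t=0$ directly since then $W_n(0)=Z_n(\mathbb{R})/P_n$ and the claim reduces to the already-cited non-degeneracy of $W$), which forces $W(t)>0$ a.s. on the survival event, hence a.s. since $N\ge 1$.

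For the degenerate part, $t\le t_-$ or $t\ge t_+$, the idea is that $\tilde Z_n(t)$ grows strictly slower than its quenched mean $m_0(t)\cdots m_{n-1}(t)$, so the martingale must converge to $0$. I would prove this via Theorem \ref{LTT2.1.4}: that theorem gives, a.s. for all $t$, $\frac{1}{n}\log\tilde Z_n(t)\to\tilde\Lambda(t)$, while $\frac1n\log\big(m_0(t)\cdots m_{n-1}(t)\big)\to\Lambda(t)$ by the ergodic theorem. For $t\ge t_+$ we have $\tilde\Lambda(t) = t\Lambda'(t_+) < \Lambda(t)$: indeed at $t=t_+$ this is $t_+\Lambda'(t_+) = \Lambda(t_+)$ by definition of $t_+$, wait — one must be careful, so instead note $\tilde\Lambda(t) - \Lambda(t) = t\Lambda'(t_+) - \Lambda(t)$ and by convexity of $\Lambda$ the function $t\mapsto t\Lambda'(t_+) - \Lambda(t)$ has derivative $\Lambda'(t_+) - \Lambda'(t) \le 0$ for $t\ge t_+$ and equals $-\rho(t_+) = 0$ at $t=t_+$ when $t_+<\infty$, hence is $\le 0$, and strictly negative for $t > t_+$; at $t=t_+$ itself one still has $\tilde\Lambda(t_+)=\Lambda(t_+)$ so a finer argument is needed exactly at the endpoint. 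So $\frac1n\log W_n(t) \to \tilde\Lambda(t) - \Lambda(t) \le 0$, which gives $W(t)=0$ whenever the inequality is strict, i.e. for $t>t_+$ and $t<t_-$. The boundary cases $t=t_+$ (when finite) and $t=t_-$ require an extra argument: here I would use a direct martingale/convexity argument — e.g. the function $t\mapsto W(t)$ is a.s. finite on $[t_-,t_+]$ and $\log$-convex, or one invokes that $W_n(t_+)$ converges and applies a $0$–$1$ law together with the fact that at the critical point the spine random walk $\frac1n\log(e^{tS}/\text{mean})$ has zero drift, so by recurrence/oscillation of the centered ergodic sum the martingale cannot have a positive limit (a Seneta–Heyde-type or Biggins boundary argument).

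The main obstacle I expect is the boundary cases $t=t_\pm$: away from the boundary everything follows cleanly from Theorem \ref{LTT2.1.4} and the ergodic theorem for the degenerate side, and from a standard size-biasing/many-to-one computation for the non-degenerate side, but pinning down that $W(t_+)=0$ exactly at the critical value needs either a delicate second-order analysis of the spine walk (showing the centered sum $\sum(\log m_i(t_+) - t_+ \text{drift})$ oscillates, so $\inf_n e^{t_+ S}/(\text{product})$ along the spine is $0$ a.s.) or an appeal to a general result on branching random walks in random environment at criticality. A secondary technical point is verifying the quenched $L^1$-convergence uniformly enough in $\xi$ that the "a.s." quantifier can be placed outside, which is handled by the $\mathbb{P}$-a.s. version of the ergodic theorem applied to the spine.
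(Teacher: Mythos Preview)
The paper does not actually prove this lemma: it simply observes that $t\in(t_-,t_+)$ is equivalent to $t\Lambda'(t)-\Lambda(t)<0$ and then cites Theorem 7.2 of Biggins--Kyprianou (2004) and a result of Kuhlbusch (2004), which give both conclusions directly. Your proposal is therefore more ambitious than what the paper does. For the non-degenerate part, the spine/size-biasing argument you outline is essentially what those cited references carry out, so that part is fine in spirit.

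The problematic part is your plan for $t\le t_-$ or $t\ge t_+$: you propose to invoke Theorem~\ref{LTT2.1.4}, but in the paper's logical structure Theorem~\ref{LTT2.1.4} is proved \emph{using} this lemma (Lemma~\ref{LTL2.3.2} explicitly appeals to the non-degenerate half of Lemma~\ref{LTL2.3.1}). One could try to escape circularity by first establishing the non-degenerate part independently, then Theorem~\ref{LTT2.1.4}, and only then deducing the degenerate part for $t$ strictly beyond $t_\pm$ --- but this reorganization is not what you wrote, and it still leaves the endpoints uncovered. Indeed, as you yourself notice, at $t=t_\pm$ (when finite) one gets $\tilde\Lambda(t_\pm)=\Lambda(t_\pm)$, so $\frac{1}{n}\log W_n(t_\pm)\to 0$ says nothing about whether $W(t_\pm)=0$. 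Your suggested fixes (spine-walk oscillation, Seneta--Heyde-type boundary argument) point in the right direction but are themselves substantial pieces of work --- precisely the content of the Biggins--Kyprianou and Kuhlbusch results the paper is invoking. In short: the paper delegates the entire lemma to the literature; your independent route is sound for the non-degenerate half and for $t$ strictly outside $[t_-,t_+]$ (modulo the reordering), but at the critical points you end up needing exactly the external machinery you were trying to avoid.
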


Notice that $t\in(t_-,t_+)$  is equivalent to $t\Lambda'(t)-\Lambda(t)<0$. Therefore the lemma is an immediate consequence of
Theorem 7.2 of Biggins and Kyprianous (2004) on a branching process in a random environment, or of a result of
  Kuhlbusch (2004, \cite{ku}) on weighted branching
processes in random environment.

\begin{lem}\label{LTL2.3.2}
 If $t\in (t_-,t_+)$, then
\begin{equation}\label{LTE2.3.1}
\lim_{n\rightarrow\infty}\frac{1}{n}\log \tilde Z_n(t)=\Lambda(t) \mbox{ a.s..}
\end{equation}
\end{lem}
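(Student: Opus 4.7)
The natural strategy is to exploit the decomposition $\tilde Z_n(t)=W_n(t)\prod_{k=0}^{n-1}m_k(t)$ that is built into the martingale $W_n(t)$ introduced just above the lemma. Taking logarithms and dividing by $n$,
\[
\frac{1}{n}\log \tilde Z_n(t)=\frac{1}{n}\log W_n(t)+\frac{1}{n}\sum_{k=0}^{n-1}\log m_k(t),
\]
the second term is a Birkhoff average of the stationary ergodic sequence $(\log m_k(t))$, and by the integrability assumption in (\ref{H1}) Birkhoff's theorem delivers a.s.\ convergence to $\Lambda(t)$. The whole problem therefore reduces to showing that $\frac{1}{n}\log W_n(t)\to 0$ a.s.

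The upper bound $\limsup_n\frac{1}{n}\log W_n(t)\le 0$ comes for free: $W_n(t)$ is a nonnegative martingale and hence converges a.s.\ to some finite random variable $W(t)\in[0,\infty)$. The matching lower bound amounts to excluding the possibility that $W_n(t)$ decays exponentially fast, for which the natural sufficient condition is $W(t)>0$ a.s. Lemma \ref{LTL2.3.1} provides exactly this as soon as $t\in(t_-,t_+)$ and the extra integrability $\mathbb{E}[W_1(t)\log^+W_1(t)]<\infty$ holds, and for any such $t$ the lemma is immediate.

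To reach every $t\in(t_-,t_+)$ without imposing an extra integrability condition, I would use convexity. Both $t\mapsto\frac{1}{n}\log\tilde Z_n(t)$ (a log-sum-exp) and $t\mapsto\Lambda(t)=\mathbb{E}\log m_0(t)$ (an expectation of convex functions) are convex in $t$, and $\Lambda$ is continuous on the open interval $(t_-,t_+)$. If one can identify a countable dense set $D\subset(t_-,t_+)$ of ``good'' parameters, on which the a.s.\ convergence holds simultaneously outside a single exceptional null set, then the classical principle that pointwise convergence of convex functions on a dense subset of an open interval to a continuous convex limit forces convergence at every point of the interval closes the argument.

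The main obstacle is justifying the density of $D$, i.e.\ that the set $\{t:\mathbb{E}[W_1(t)\log^+W_1(t)]<\infty\}$ is dense in $(t_-,t_+)$. If this does not follow from the standing assumptions, the standard workaround is a truncation argument: replace $N$ by $N\wedge b$ and the displacements by $L_i\mathbf{1}_{\{|L_i|\le b\}}$, apply the previous reasoning to the truncated model where every integrability condition is trivial, and then let $b\to\infty$, using the monotone inequality $\tilde Z_n^{(b)}(t)\le \tilde Z_n(t)$ together with $\Lambda^{(b)}(t)\uparrow\Lambda(t)$ to transfer the lower bound to the original process.
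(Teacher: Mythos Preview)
Your strategy is exactly that of the paper: decompose $\frac{1}{n}\log\tilde Z_n(t)$ into the Birkhoff average and $\frac{1}{n}\log W_n(t)$; get the upper bound from a.s.\ martingale convergence $W_n(t)\to W(t)<\infty$; invoke Lemma~\ref{LTL2.3.1} for the lower bound whenever the $L\log L$ condition holds; and in the general case, truncate, apply the result to the truncated model, and pass to the limit.

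There is, however, a genuine slip in your truncation step. Replacing $L_i$ by $L_i\mathbf{1}_{\{|L_i|\le b\}}$ does \emph{not} give the monotone inequality $\tilde Z_n^{(b)}(t)\le\tilde Z_n(t)$ that you invoke: for $t>0$ and a particle with $L_i<-b$, the truncated contribution $e^{0}=1$ exceeds the original $e^{tL_i}$. For the same reason $\Lambda^{(b)}(t)$ need not be monotone in $b$. The paper sidesteps this by truncating \emph{only} the offspring number, taking $N^c=N\wedge c$ and leaving all displacements intact; then $\tilde Z_n^c(t)\le\tilde Z_n(t)$ is trivial (fewer terms, same summands) and $m_0^c(t)\uparrow m_0(t)$ yields $\Lambda_c(t)\uparrow\Lambda(t)$. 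The price is that the $L\log L$ condition for $W_1^c(t)$ is no longer automatic; the paper verifies it via the Cauchy--Schwarz type bound $\bigl(\sum_{i\le N\wedge c}e^{tL_i}\bigr)^2\le(N\wedge c)\sum_{i\le N\wedge c}e^{2tL_i}$, a concave change-of-measure estimate, and the standing hypothesis $\mathbb{E}|\log m_0(2t)|<\infty$ from (\ref{H1}). One must also check that $t$ still lies in $(t_-^c,t_+^c)$ for $c$ large, which the paper handles using $\Lambda_c\uparrow\Lambda$ together with convexity of $\Lambda_c$. Your convexity-in-$t$ alternative is not pursued in the paper, and as you yourself note, density of the ``good'' set of parameters is not available from the standing assumptions.
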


\begin{proof}[Proof]
If $\mathbb{E}W_1(t)\log^+ W_1(t) <\infty$, by Lemma \ref{LTL2.3.1}, $W(t)>0$ a.s.. Consequently,
$$
\frac{1}{n}\log \tilde Z_n(t)=\frac{1}{n}\log W_n(t)+\frac{1}{n}\sum_{i=0}^{n-1}\log m_i(t)\rightarrow \mathbb{E}\log m_0(t)=\Lambda(t)\;\;a.s..
$$

 We now consider the general case where $\mathbb{E}W_1(t)\log^+ W_1(t) $ may be infinite.
We only consider the case where $t \in [0,t_+)$ (the case where  $t \in (t_-, 0])$ can be considered in a similar way, or by considering
$(-L_u)$ instead of   $(L_u)$).

For the lower bound, we use an truncating argument.  For $c \in \mathbb{N} $, we construct a new branching random walk  in a random environment (BRWRE)
using  $X^c(u)=(N(u)\wedge c, L_{1}(u), L_{2}(u), \cdots)$ instead of  $X(u)=(N(u), L_{1}(u), L_{2}(u), \cdots)$,
where and throughout we write $a\wedge b=\min(a,b)$.
We shall apply
Lemma \ref{LTL2.3.1} to the new BRWRE. We define   $m_n^c (t)$, $W_n^c(t)$, $\Lambda_c(t)$ and $t_+^c$ for the new BRWRE just
as just as  $m_n (t)$
$W_n(t)$,   $\Lambda (t)$ and $t^c$  were defined for the original  BRWRE.
%
%

We first show that $\Lambda_c(t):=\mathbb{E}\log m_0(t)\uparrow\Lambda(t)$ as $c\uparrow \infty$.
Clearly, $m_0^c(t)=\mathbb{E}_\xi\sum_{i=1}^{N\wedge c}e^{tL_i}\uparrow m_0(t)$ as $c\uparrow \infty$. This leads to $\mathbb{E}\log^+m_0^c(t)\uparrow\mathbb{E}\log^+m_0(t) $ by the monotone convergence theorem.  On the other hand, for $c\geq 1$,
we have
$$ \log^-m_0^c(t) \leq \log^- m_0^{1}(t) = \log^-  \mathbb{E}_\xi e^{tL_1}  \leq  t \mathbb{E}_\xi |L_1 | $$
(as $\mathbb{E}_\xi e^{tL_1} \geq e^{- t \mathbb{E}_\xi  |L_1|}$  by Jensen's inequality).
Therefore by the condition
$E|L_1| < \infty$ and the dominated convergence theorem,
 $\mathbb{E}\log^-m_0^c(t)\downarrow\mathbb{E}\log^-m_0(t) $.

We next prove that for $c>0$ large enough, $t \in [0, t_+^c)$,  which is equivalent to  $t\Lambda'_c(t)-\Lambda_c(t)<0$.
Recall that $t\in[0,,t_+)$  is equivalent to $t\Lambda'(t)-\Lambda(t)<0$.
 By the definition of $\Lambda'(t)$, there exists a $h>0$ such that
$$t\frac{\Lambda(t+h)-\Lambda(t)}{h}-\Lambda(t)<0.$$
Since $\Lambda_c  \uparrow\Lambda  $ as $c\uparrow\infty$, we have for $c$ large enough,
\begin{equation}\label{LTE2.3.2}
t\frac{\Lambda_c(t+h)-\Lambda_c(t)}{h}-\Lambda_c(t)<0.
\end{equation}
The convexity of $\Lambda_c(t)$ shows that
\begin{equation}\label{LTE2.3.3}
\Lambda'_c(t)\leq \frac{\Lambda_c(t+h)-\Lambda_c(t)}{h}.
\end{equation}
Combing (\ref{LTE2.3.3}) with (\ref{LTE2.3.2}) we obtain for $c$ large enough,
\begin{equation}\label{LTE2.3.3}
t\Lambda'_c(t)-\Lambda_c(t)<0.
\end{equation}

 We finally prove that  $\mathbb{E}W^c_1(t)\log^+ W^c_1(t) <\infty$.
 Let $Y=W^c_1(t)$. we define a random variable $X$ whose distribution is determined by
$$\mathbb{E}_\xi g(X)=\mathbb{E}_\xi Yg(Y)$$
for all bounded and measurable function $g$ (notice that $\mathbb{E}_\xi Y = 1$ by definition).
 For $x\in\mathbb{R}$, let
\begin{equation*}
l(x)=\left\{	\begin{array}{ll}
				{x}/{e}& \text{if $ x<e$,}\\
				\log x &\text{if $x\geq e$.}
			\end{array}
			\right.
\end{equation*}
It is clear that $l$ is concave and $\log^+x\leq l(x)\leq 1+\log^+x$ for all $x\in\mathbb{R}$. Thus
\begin{eqnarray*}
\mathbb{E}_\xi Y\log ^+ Y=\mathbb{E}_\xi\log ^+X&\leq&\mathbb{E}_\xi l(x)\\
&\leq& l(\mathbb{E}_\xi X)
=l(\mathbb{E}_\xi Y^2)\\
&\leq&1+\log^+\mathbb{E}_\xi Y^2\\
&\leq&1+\log^+\left(\frac{cm_0^c(2t)}{m_0^c(t)^2}\right),
\end{eqnarray*}
where the last inequality holds as $ (\sum_{i=1}^{N \wedge c}  e^{tL_i})^2  \leq  (N \wedge c) \sum_{i=1}^{N \wedge c} e^{2 tL_i}$.
Taking expectation in the above inequality , we get
\begin{eqnarray*}
\mathbb{E}W^c_1(t)\log^+ W^c_1(t) =\mathbb{E}Y\log^+Y&\leq& 1+\mathbb{E}\log^+\left(\frac{cm_0^c(2t)}{m_0^c(t)^2}\right)\\
&\leq&1+\log c +\mathbb{E} \log^+m_0(2t) +2\mathbb{E}\log^-m_0^{c}(t)<\infty.
\end{eqnarray*}

We have therefore proved that for $c>0$ large enough, the new BRWRE satisfies the conditions of Lemma \ref{LTL2.3.1}, so that
$$
\lim_{n\rightarrow\infty}\frac{1}{n}\log \tilde Z^c_n(t)= \mathbb{E}\log m^c_0(t)=\Lambda_c(t)\;\;a.s..
$$
Notice that $\tilde Z_n(t)\geq \tilde Z_n^c(t)$. It follows that
$$\liminf_{n\rightarrow\infty}\frac{1}{n}\log \tilde Z_n(t)\geq\Lambda_c(t)\;\;a.s..$$
Letting $c\uparrow\infty$, we obtain
$$\liminf_{n\rightarrow\infty}\frac{1}{n}\log \tilde Z_n(t)\geq\Lambda(t)\;\;a.s..$$

For th upper bound, from the decomposition $\frac{1}{n}\log \tilde Z_n(t)=\frac{1}{n}\log W_n(t)+\frac{1}{n}\sum_{i=0}^{n-1}\log m_i(t)$
and the fact that $W_n (t) \rightarrow W(t) < \infty$ a.s., we obtain  that
$$\limsup_{n\rightarrow\infty}\frac{1}{n}\log \tilde Z_n(t)\leq\Lambda(t)\;\;a.s..$$
This completes the proof.
\end{proof}

\begin{lem}\label{LTL2.3.3}
It is a.s. that
$$ \limsup_{n\rightarrow\infty} \frac{R_n}{n} \leq \Lambda'(t_+) .$$
\end{lem}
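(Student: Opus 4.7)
The plan is to apply an exponential Markov inequality to $Z_n([nx,\infty))$ for $x > \Lambda'(t_+)$ and then use the a.s.\ convergence of the free energy from Theorem \ref{LTT2.1.4}. For any $t > 0$,
\begin{equation*}
Z_n\bigl([nx,\infty)\bigr) = \sum_{u\in\mathbb{T}_n}\mathbf{1}_{\{S_u\geq nx\}} \leq e^{-tnx}\sum_{u\in\mathbb{T}_n}e^{tS_u} = e^{-tnx}\,\tilde Z_n(t).
\end{equation*}
Taking logarithms, dividing by $n$, and applying Theorem \ref{LTT2.1.4} along a countable dense set $D\subset(0,\infty)$, I obtain, on a single a.s.\ event, that for every $t\in D$,
\begin{equation*}
\limsup_{n\to\infty}\frac{1}{n}\log Z_n\bigl([nx,\infty)\bigr)\leq -tx+\tilde\Lambda(t).
\end{equation*}

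Next, for $x > \Lambda'(t_+)$ I would pick $t\in D$ making the right-hand side strictly negative. If $t_+<\infty$, choose $t>t_+$ close enough to $t_+$: then by definition $\tilde\Lambda(t)=t\Lambda'(t_+)$, so $-tx+\tilde\Lambda(t)=t(\Lambda'(t_+)-x)<0$. If $t_+=\infty$ and $\Lambda'(t_+)<\infty$, then $\tilde\Lambda\equiv\Lambda$, and convexity of $\Lambda$ together with the standard fact $\Lambda(t)/t\to\Lambda'(+\infty)=\Lambda'(t_+)$ as $t\to\infty$ lets me take $t$ large enough that $\Lambda(t)/t<x$, giving $\Lambda(t)-tx<0$. (The case $\Lambda'(t_+)=+\infty$ is vacuous.)

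Since $Z_n([nx,\infty))$ takes values in $\mathbb{N}$, a strictly negative limsup of $\frac{1}{n}\log Z_n([nx,\infty))$ forces $Z_n([nx,\infty))=0$ for all sufficiently large $n$, a.s., i.e.\ $R_n<nx$ eventually. Choosing a sequence of rationals $x_k\downarrow\Lambda'(t_+)$ and intersecting the corresponding null events yields $\limsup_n R_n/n\leq\Lambda'(t_+)$ a.s.

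The main obstacle is essentially bookkeeping: arranging a single a.s.\ event on which the free-energy convergence of Theorem \ref{LTT2.1.4} holds simultaneously for all the $t\in D$ needed, and for the sequence $x_k$ used in the final descent. The analytic ingredients are minor --- the identity $\tilde\Lambda(t_+)=t_+\Lambda'(t_+)$ is immediate from the definition of $t_+$, and $\Lambda(t)/t\to\Lambda'(+\infty)$ in the case $t_+=+\infty$ is a standard consequence of convexity.
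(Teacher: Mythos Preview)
Your argument has a circularity problem. Lemma~\ref{LTL2.3.3} is one of the ingredients in the paper's proof of Theorem~\ref{LTT2.1.4}: the upper bound in Lemma~\ref{LTL2.3.4} (the free-energy limit for $t\geq t_+$) is obtained precisely by invoking $\limsup_n R_n/n\leq\Lambda'(t_+)$. So when, in the case $t_+<\infty$, you appeal to Theorem~\ref{LTT2.1.4} at a point $t>t_+$ and write $\tilde\Lambda(t)=t\Lambda'(t_+)$, you are using the statement you are trying to prove. Your treatment of the case $t_+=\infty$ is fine, since there you only invoke Lemma~\ref{LTL2.3.2}, which is established independently.

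The fix is easy: in the case $t_+<\infty$, take $t\in(0,t_+)$ just below $t_+$ rather than above it. Then only Lemma~\ref{LTL2.3.2} is needed, and since $\Lambda(t_+)=t_+\Lambda'(t_+)$ one has $\Lambda(t)-tx\to t_+(\Lambda'(t_+)-x)<0$ as $t\uparrow t_+$. Even more directly, you could skip Theorem~\ref{LTT2.1.4} entirely and use only the trivial upper bound $\limsup_n\frac{1}{n}\log\tilde Z_n(t)\leq\Lambda(t)$ (from $W_n(t)\to W(t)<\infty$), valid for every $t$, then optimize to get $-\Lambda^*(x)<0$. For comparison, the paper sidesteps the free energy and argues via the quenched mean: Theorem~\ref{LTT2.1.1} gives $\frac{1}{n}\log\mathbb{E}_\xi Z_n[na,\infty)\to -\Lambda^*(a)<0$ for $a>\Lambda'(t_+)$, whence $\sum_n\mathbb{P}_\xi(Z_n[na,\infty)\geq 1)<\infty$ by Markov, and Borel--Cantelli finishes. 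That route is logically cleaner here because Theorem~\ref{LTT2.1.1} is a prerequisite, not a consequence, of the free-energy theorem.
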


\begin{proof}[Proof]
For $a> \Lambda'(t_+)$,  we have $\Lambda^*(a)>0$. By Theorem \ref{LTT2.1.1},
$$\lim_{n\rightarrow\infty}\frac{1}{n}\mathbb{E}_\xi Z_n [an,\infty) =-\Lambda^*(a)<0\;\;a.s..$$
This leads to  $\sum_n \mathbb{P}_\xi ( Z_n [an,\infty) \geq 1) <\infty$ a.s.. It follows that
by Borel-Cantelli's lemma,  $\mathbb{P}_\xi $ a.s. ,
$$Z_n[an,\infty)=0 \quad \mbox{ for $n$ large enough}.$$
Therefore $R_n<an$, so that a.s.,
$$\limsup_{n\rightarrow\infty} \frac{R_n}{n} \leq a.\;$$
Letting $a\downarrow \Lambda'(t_+)$, we obtain the desired result.
\end{proof}

\begin{lem}\label{LTL2.3.4}
 If $t\geq t_+$, then a.s.,
\begin{equation}\label{LTE2.3.5}
\lim_{n\rightarrow\infty} \frac{\log \tilde Z_n (t)}{n} = t\Lambda'(t_+) .
\end{equation}
\end{lem}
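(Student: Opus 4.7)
The plan is to sandwich $f_n(t) := \frac{\log \tilde Z_n(t)}{n}$ between matching asymptotic bounds using Lemmas~\ref{LTL2.3.2} and \ref{LTL2.3.3}, exploiting the convexity of $t\mapsto f_n(t)$ (which follows from H\"older's inequality applied to $\tilde Z_n(t) = \sum_{u\in\mathbb{T}_n} e^{tS_u}$). We may assume $t_+<\infty$, otherwise the statement is vacuous.

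For the upper bound, fix $t\geq t_+$ and pick $s\in(0,t_+)$. Since $t-s>0$, the factorization $e^{tS_u}=e^{sS_u}e^{(t-s)S_u}$ together with the crude bound $e^{(t-s)S_u}\leq e^{(t-s)R_n}$ gives $\tilde Z_n(t)\leq e^{(t-s)R_n}\tilde Z_n(s)$, hence
\begin{equation*}
f_n(t) \leq (t-s)\,\frac{R_n}{n} + f_n(s).
\end{equation*}
By Lemma~\ref{LTL2.3.2}, $f_n(s)\to\Lambda(s)$ a.s., and by Lemma~\ref{LTL2.3.3}, $\limsup_n R_n/n\leq\Lambda'(t_+)$ a.s. Taking $\limsup$ and then letting $s\uparrow t_+$ along a countable sequence, the right-hand side tends to $(t-t_+)\Lambda'(t_+)+\Lambda(t_+)=t\Lambda'(t_+)$ by continuity of $\Lambda$ and the defining identity $\Lambda(t_+)=t_+\Lambda'(t_+)$.

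For the lower bound I would exploit the convexity of $f_n$: for any $t_-<s<u<t_+$, monotonicity of secant slopes of a convex function yields
\begin{equation*}
f_n(t) \geq f_n(s) + (t-s)\,\frac{f_n(u)-f_n(s)}{u-s}.
\end{equation*}
Applying Lemma~\ref{LTL2.3.2} at both $s$ and $u$ gives $\liminf_n f_n(t) \geq \Lambda(s) + (t-s)\frac{\Lambda(u)-\Lambda(s)}{u-s}$ a.s. Letting first $u\uparrow t_+$ (so that $\Lambda(u)\to\Lambda(t_+)$ by continuity) and then $s\uparrow t_+$ (so that the secant slope $[\Lambda(t_+)-\Lambda(s)]/(t_+-s)$ tends to $\Lambda'(t_+)$ by differentiability of $\Lambda$ at $t_+$), the right-hand side converges to $\Lambda(t_+)+(t-t_+)\Lambda'(t_+)=t\Lambda'(t_+)$.

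The delicate point is the order of this double limit: if one tried $s\uparrow t_+$ first, the secant-slope contribution would collapse to $0$ and the bound would only give $\Lambda(t_+)=t_+\Lambda'(t_+)$, losing the essential $(t-t_+)\Lambda'(t_+)$ term. Taking $u\uparrow t_+$ first to replace $\Lambda(u)$ by $\Lambda(t_+)$, and only afterwards $s\uparrow t_+$ to recover the derivative $\Lambda'(t_+)$ from difference quotients strictly inside $(t_-,t_+)$, is what produces the correct expansion. The a.s.\ statements of Lemma~\ref{LTL2.3.2} along sequences $s_k,u_k\uparrow t_+$ are secured simultaneously by a countable intersection of full-measure events.
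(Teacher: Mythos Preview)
Your proof is correct and follows essentially the same route as the paper's: the upper bound via $\tilde Z_n(t)\le e^{(t-s)R_n}\tilde Z_n(s)$ combined with Lemmas~\ref{LTL2.3.2} and~\ref{LTL2.3.3} is identical, and the lower bound via the secant inequality for the convex function $t\mapsto\log\tilde Z_n(t)$ is the same convexity argument the paper uses. The only cosmetic difference is the order of the two limits in the lower bound: the paper sends $u\downarrow s$ first (recovering $\Lambda'(s)$) and then $s\uparrow t_+$, whereas you send $u\uparrow t_+$ first and then $s\uparrow t_+$; both orders yield $t\Lambda'(t_+)$.
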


\begin{proof}[Proof]
For the upper bound, we only consider the case where $t_+ < \infty$.  Choose $0<t_0<t_+\leq t$. Since $S_u\leq R_n$ for $u\in\mathbb{T}_n$, we have
$$tS_u\leq t_0S_u+(t-t_0)R_n,$$
so that
$$\tilde Z_n(t)\leq\tilde Z_n(t_0)e^{(t-t_0)R_n}.$$
Thus
$$ \frac{\log\tilde Z_n(t)}{n}\leq\frac{\log\tilde Z_n(t_0)}{n}+(t-t_0)\frac{R_n}{n}.$$
Letting $n\rightarrow\infty$ and using Lemma \ref{LTL2.3.3}, we get a.s.,
$$ \limsup_{n\rightarrow\infty} \frac{\log \tilde Z_n (t)}{n} \leq \Lambda (t_0) +  (t -t_0) \Lambda'(t_+). $$
Letting $t_0\uparrow t_+$ and using $\Lambda (t_+)  -t_+
\Lambda'(t_+) =0$, we obtain a.s.,
$$\limsup_{n\rightarrow\infty} \frac{\log \tilde Z_n (t)}{n} \leq t\Lambda'(t_+).$$

For the lower bound, as $\log \tilde Z_n (t) $ is a convex function of $t$, for
$t_- < t_0 < t_1 < t_+ \leq t$, we have
$$   \frac{\log \tilde Z_n (t) - \log \tilde Z_n (t_0)}{t-t_0} \geq
    \frac{\log \tilde Z_n (t_1) - \log \tilde Z_n (t_0)}{t_1-t_0}. $$
Dividing the inequality by $n$ and applying  Lemma \ref{LTL2.3.2}
to $t_0$ and $t_1$, we obtain a.s.,
$$ \liminf_{n\rightarrow\infty} \frac{\log \tilde Z_n (t)}{n} \geq
 \Lambda (t_0) + \frac{t-t_0}{t_1-t_0}  (\Lambda (t_1) - \Lambda
 (t_0)). $$
Letting $t_1 \downarrow t_0$, we get a.s.,
$$  \liminf_{n\rightarrow\infty} \frac{\log \tilde Z_n (t)}{n}
  \geq   \Lambda (t_0) + (t-t_0) \Lambda'(t_0). $$
Letting $t_0 \uparrow t_+$ and using $\Lambda (t_+)  -t_+
\Lambda'(t_+) =0, $ we obtain  a.s.,
$$\liminf_{n\rightarrow\infty} \frac{\log \tilde Z_n (t)}{n} \geq t\Lambda'(t_+).$$
This completes the proof.
\end{proof}

\begin{lem}\label{LTL2.3.5}
It is a.s. that
$$ \liminf_{n\rightarrow\infty} \frac{R_n}{n} \geq \Lambda'(t_+) .$$
\end{lem}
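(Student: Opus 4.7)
The plan is to use the elementary bound
\[
\tilde Z_n(t) = \sum_{u\in\mathbb{T}_n} e^{tS_u} \leq Z_n(\mathbb{R})\, e^{tR_n}\qquad (t\geq 0),
\]
take logs, divide by $n$ and rearrange to get, for every $t>0$,
\[
\frac{R_n}{n} \;\geq\; \frac{1}{t}\left(\frac{\log \tilde Z_n(t)}{n} \;-\; \frac{\log Z_n(\mathbb{R})}{n}\right).
\]
The idea is that the right-hand side converges a.s.\ to an explicit quantity depending on $t$, and then letting $t\to\infty$ recovers $\Lambda'(t_+)$.

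First I would identify the a.s.\ limit of $n^{-1}\log Z_n(\mathbb{R})$. Under the standing assumption $N\geq 1$ a.s.\ together with (\ref{LTE1.2}), the non-negative martingale $Z_n(\mathbb{R})/P_n$ converges a.s.\ to a strictly positive and a.s.\ finite random variable $W$, and the ergodic theorem gives $n^{-1}\log P_n = n^{-1}\sum_{i=0}^{n-1}\log m_i\to \mathbb{E}\log m_0 = \Lambda(0)$ a.s. Combining these, $n^{-1}\log Z_n(\mathbb{R})\to \Lambda(0)$ a.s.

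Next I would fix a countable sequence $t_k\uparrow\infty$ of positive reals and control $n^{-1}\log \tilde Z_n(t_k)$ by the preceding lemmas: if $t_+=\infty$, Lemma \ref{LTL2.3.2} gives the a.s.\ limit $\Lambda(t_k)$; if $t_+<\infty$, I choose $t_k\geq t_+$ so that Lemma \ref{LTL2.3.4} supplies the a.s.\ limit $t_k\Lambda'(t_+)$. Intersecting countably many a.s.\ events, I obtain on a single full-measure set that, for every $k$,
\[
\liminf_{n\to\infty}\frac{R_n}{n} \;\geq\; \frac{\tilde\Lambda(t_k) - \Lambda(0)}{t_k},
\]
with $\tilde\Lambda(t_k)$ equal to $\Lambda(t_k)$ or $t_k\Lambda'(t_+)$ according to the case.

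Finally I would let $t_k\to\infty$. When $t_+<\infty$, $\bigl(t_k\Lambda'(t_+)-\Lambda(0)\bigr)/t_k = \Lambda'(t_+) - \Lambda(0)/t_k \to \Lambda'(t_+)$; when $t_+=\infty$, convexity of $\Lambda$ forces $\Lambda(t)/t\to \Lambda'(\infty) = \Lambda'(t_+)$, so that $\bigl(\Lambda(t_k)-\Lambda(0)\bigr)/t_k\to\Lambda'(t_+)$ as well. In either case $\liminf_n R_n/n\geq \Lambda'(t_+)$ a.s. The most delicate point is Step 1: one needs $W>0$ a.s.\ (not just on a survival event of positive probability) to guarantee that $n^{-1}\log Z_n(\mathbb{R})$ really equals $\Lambda(0)$ in the limit, and this is precisely where the Kesten--Stigum-type condition $\mathbb{E}(N/m_0)\log^+N<\infty$ from (\ref{LTE1.2}) together with $N\geq 1$ a.s.\ is invoked; once these hypotheses are in hand, the remainder reduces to a limit interchange justified by the convexity of $\Lambda$.
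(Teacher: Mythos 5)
Your proposal reproduces the paper's own argument: it starts from the same elementary bound $\tilde Z_n(t)\leq Z_n(\mathbb{R})e^{tR_n}$, identifies the a.s.\ limit of $n^{-1}\log Z_n(\mathbb{R})$ via the martingale and the ergodic theorem, invokes Lemma~\ref{LTL2.3.4} when $t_+<\infty$ and Lemma~\ref{LTL2.3.2} when $t_+=\infty$ for the limit of $n^{-1}\log\tilde Z_n(t)$, and then lets $t\to\infty$ using $\Lambda(t)/t\to\Lambda'(\infty)$ by convexity. The only (harmless) additional care you take is the explicit countable sequence $t_k$ and the remark about $W>0$; in fact for this step only $\limsup_n n^{-1}\log Z_n(\mathbb{R})\leq\mathbb{E}\log m_0$ is needed, which follows already from $W<\infty$ a.s.
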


\begin{proof}[Proof]
Notice that $S_u\leq R_n$ for $u\in\mathbb{T}_n$, we have
$$\tilde Z_n(t)\leq Z_n(\mathbb{R})e^{tR_n},$$
so that for each $0<t<\infty$,
\begin{equation}\label{LTE2.3.6}
\frac{\log\tilde Z_n(t)}{n}\leq\frac{ \log Z_n(\mathbb{R})}{n}+t\frac{R_n}{n}.
\end{equation}
If $t_+<\infty$, then by Lemma \ref{LTL2.3.4}, the above inequality gives for $t>t_+$, a.s.,
$$\Lambda'(t_+)\leq \frac{1}{t}\mathbb{E}\log m_0+\liminf_{n\rightarrow\infty} \frac{R_n}{n}.$$
Letting $t\uparrow\infty$, we obtain the desired result.  If $t_+=\infty$,  then by Lemma \ref{LTL2.3.2}, the  inequality (\ref{LTE2.3.6}) gives for $t>0$, a.s.,
$$\frac{\Lambda(t)}{t}\leq \frac{1}{t}\mathbb{E}\log m_0+\liminf_{n\rightarrow\infty} \frac{R_n}{n}.$$
Letting $t\uparrow\infty$, we get a.s.,
$$\liminf_{n\rightarrow\infty} \frac{R_n}{n}\geq \Lambda'(\infty)=\Lambda'(t_+).$$
\end{proof}

The conclusions for $t\leq t_-$ and $L_n$ can be obtained in a similar way, or by applying the obtained results for $t\geq t_+$ and $R_n$ to the opposite branching random walk $-S_u$.  Hence Theorem \ref{LTT2.1.7} holds, and (\ref{LTE3.3}) holds a.s. for each fixed $t\in\mathbb{R}$. So a.s. (\ref{LTE3.3}) holds for all rational $t$, and therefore for all real t by the convexity of $\log \tilde Z_n(t)$. This ends the proof of Theorem \ref{LTT2.1.4}.

\section {Large deviations for $ {\mathbb{E}_\xi \frac{Z_n(n\cdot)}{Z_n(\mathbb{R})}}$}\label{LTS5}
Using the lower bound in Therorem \ref{LTT2.1.5}  and the upper bound Theorem \ref{LTT2.1.1}, we have the following theorem.
\begin{thm}\label{LTT2.1.8}
  If  a.s. $\mathbb{P}_\xi(N\leq1)=0 $ and $\mathbb{E}_\xi N^{1+\delta}\leq K$ for some constants $\delta>0$
  and $K>0$, then a.s.,  for each measurable subset $A$ of $\mathbb{R}$,
\begin{eqnarray*}\label{LDP4}
   - \inf_{x\in A^o} \tilde\Lambda^* (x)-\mathbb{E}\log m_0
   &\leq& \liminf_{n\rightarrow \infty}
             \frac{1}{n}  \log \mathbb{E}_\xi\frac{Z_n (nA) }{Z_n(\mathbb{R})}\\
  & \leq &  \limsup_{n\rightarrow \infty}
            \frac{1}{n} \log  \mathbb{E}_\xi\frac{Z_n (nA) }{Z_n(\mathbb{R})}
   \leq - \inf_{x\in \bar A}  \Lambda^*(x)-\mathbb{E}\log m_0, \\
\end{eqnarray*}
where $A^o$ denotes the interior of $A$, and $\bar A$ its closure.
\end{thm}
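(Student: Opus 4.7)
The proof plan is to combine the two one-sided bounds from Theorems \ref{LTT2.1.1} and \ref{LTT2.1.5} with the ergodic asymptotics of the denominator $Z_n(\mathbb{R})$. The key preliminary fact is that
$$\frac{1}{n}\log Z_n(\mathbb{R}) \;=\; \frac{1}{n}\log P_n + \frac{1}{n}\log W_n \;\longrightarrow\; \mathbb{E}\log m_0 \quad \text{a.s.,}$$
where the first term converges by the ergodic theorem applied to the stationary sequence $\log m_i$, and $W_n \to W > 0$ a.s.\ since the hypotheses $\mathbb{P}_\xi(N\leq 1)=0$ and $\mathbb{E}_\xi N^{1+\delta}\leq K$ imply both non-extinction and an $L^{1+\delta}$ bound on the martingale $\{W_n\}$.

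For the \emph{lower bound}, I would fix $\epsilon>0$ and work on the event
$$E_n \;=\; \bigl\{Z_n(nA)\geq e^{n(-\inf_{x\in A^o}\tilde\Lambda^*(x)-\epsilon)}\bigr\}\,\cap\,\bigl\{Z_n(\mathbb{R})\leq e^{n(\mathbb{E}\log m_0+\epsilon)}\bigr\}.$$
Theorem \ref{LTT2.1.5} (liminf half) and the preliminary give $\mathbb{P}_\xi(E_n)\to 1$ for $\tau$-a.e.\ $\xi$. On $E_n$ the ratio $Z_n(nA)/Z_n(\mathbb{R})$ is at least $\exp\bigl(n(-\inf_{A^o}\tilde\Lambda^*-\mathbb{E}\log m_0-2\epsilon)\bigr)$, so $\mathbb{E}_\xi[Z_n(nA)/Z_n(\mathbb{R})]$ inherits the same lower bound up to the factor $\mathbb{P}_\xi(E_n)\to 1$; letting $\epsilon\downarrow 0$ yields the claimed liminf inequality.

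For the \emph{upper bound}, I would split
$$\mathbb{E}_\xi \frac{Z_n(nA)}{Z_n(\mathbb{R})} \;=\; I_1(n) + I_2(n),$$
with $I_1$ restricted to $\{Z_n(\mathbb{R})\geq e^{n(\mathbb{E}\log m_0-\epsilon)}\}$ and $I_2$ to its complement. On the ``good'' set,
$$I_1(n)\;\leq\; e^{-n(\mathbb{E}\log m_0-\epsilon)}\,\mathbb{E}_\xi Z_n(nA),$$
and the limsup half of Theorem \ref{LTT2.1.1} delivers $\limsup n^{-1}\log I_1(n)\leq -\mathbb{E}\log m_0-\inf_{\bar A}\Lambda^*(x)+\epsilon$. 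The bad-set term is bounded by $\mathbb{P}_\xi(W_n\leq e^{-n\epsilon+o(n)})$, which must be shown to decay at an \emph{exponential} rate (going to zero is not enough, since we are taking $n^{-1}\log$).

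The main obstacle is therefore establishing exponential decay of the lower tail of $W_n$. Here the assumptions are crucial: $\mathbb{P}_\xi(N\leq 1)=0$ forces $Z_n(\mathbb{R})\geq 2^n$ deterministically, while $\mathbb{E}_\xi N^{1+\delta}\leq K$ gives a uniform bound on $m_n$ and hence an $L^{1+\delta}$ bound on $W_n$. One can then adapt the Harris/Athreya-Ney argument for deterministic supercritical Galton-Watson processes (using the fixed-point equation satisfied by the limit $W$ or, more simply, an iterated branching decomposition exploiting $N\geq 2$) to obtain $\mathbb{P}_\xi(W_n<\eta)\leq C(\xi)\eta^{\alpha(\xi)}$ for some $\alpha(\xi)>0$. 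Inserting $\eta=e^{-n\epsilon/2}$ gives exponential decay with rate $\alpha(\xi)\epsilon/2$, which is positive; hence $\limsup n^{-1}\log I_2(n)$ is strictly negative, and a routine $\log(a+b)$ bound merged with $\epsilon\downarrow 0$ finishes the upper bound.
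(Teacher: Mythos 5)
Your decomposition — combining Theorems \ref{LTT2.1.1} and \ref{LTT2.1.5} with $\frac{1}{n}\log Z_n(\mathbb{R})\to\mathbb{E}\log m_0$ a.s., and splitting the upper-bound expectation on the event $\{Z_n(\mathbb{R})\gtrless e^{n(\mathbb{E}\log m_0-\epsilon)}\}$ — matches the paper's strategy exactly, and your lower-bound argument is correct. The gap is in the control of the bad-set term $I_2(n)$, and it is a genuine one.

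You propose to show $\mathbb{P}_\xi(W_n<\eta)\leq C(\xi)\eta^{\alpha(\xi)}$ for \emph{some} $\alpha(\xi)>0$, which plugging in $\eta=e^{-n\epsilon/2}$ gives $\limsup\frac{1}{n}\log I_2(n)\leq -\alpha\epsilon/2$, and you then conclude by sending $\epsilon\downarrow 0$. But this does not close the argument: after the split you have
$$\limsup_n\tfrac{1}{n}\log\mathbb{E}_\xi\tfrac{Z_n(nA)}{Z_n(\mathbb{R})}\leq\max\Bigl\{-\inf_{\bar A}\Lambda^*-\mathbb{E}\log m_0+\epsilon,\ -\tfrac{\alpha\epsilon}{2}\Bigr\},$$
and as $\epsilon\downarrow 0$ the second entry tends to $0$, not to $-\infty$. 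Since $\inf_{\bar A}\Lambda^*\geq -\mathbb{E}\log m_0$ always, the target $-\inf_{\bar A}\Lambda^*-\mathbb{E}\log m_0$ is $\leq 0$ and typically strictly negative, so the max is eventually dominated by the useless $-\alpha\epsilon/2$ term and you only recover the trivial bound $\leq 0$. A \emph{single} polynomial tail rate for $W$ cannot beat an arbitrary prescribed exponential rate in $n$.

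What the paper actually uses (Lemmas \ref{LTL2.4.2} and \ref{LTL2.4.3}) is that under $\mathbb{P}_\xi(N\leq 1)=0$ and $\mathbb{E}_\xi N^{1+\delta}\leq K$, one has $\mathbb{E}_\xi W^{-s}\leq C_s<\infty$ for \emph{every} $s>0$ (this is Theorem 3.1 of \cite{huang}). Then a Markov bound gives $\limsup\frac{1}{n}\log\mathbb{P}_\xi(Z_n(\mathbb{R})\leq e^{(\mathbb{E}\log m_0-\epsilon)n})\leq -\epsilon s$ for each $s$, and letting $s\to\infty$ yields that this limsup is $-\infty$. Only then is the bad-set contribution negligible against any finite target rate. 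So your outline is missing precisely the step that all negative moments of $W$ are finite (superpolynomial lower tail), not just some; this is the heart of Lemma \ref{LTL2.4.3} and is where the hypotheses $\mathbb{P}_\xi(N\leq 1)=0$ and $\mathbb{E}_\xi N^{1+\delta}\leq K$ are really being used.
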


Notice that $ \tilde\Lambda^* (x)=\Lambda^* (x)$ for $x\in (\Lambda'(t_-), \Lambda'(t_+))$. From Theorem \ref{LTT2.1.8} we obtain
\begin{co}\label{LTC2.1.9}
If  a.s. $\mathbb{P}_\xi(N\leq1)=0$ and $\mathbb{E}_\xi N^{1+\delta}\leq K$
 for some constants $\delta>0$ and $K>0$, then a.s.,
 $$ \lim_{n \rightarrow\infty} \frac {1}{n}  \log  \mathbb{E}_\xi\frac{Z_n [nx, \infty) }{Z_n(\mathbb{R})}
 = -\Lambda^*(x)-\mathbb{E}\log m_0 \mbox{ if } x \in (\Lambda'(0),
 \Lambda'(t_+)), $$
$$ \lim_{n \rightarrow\infty} \frac {1}{n}  \log  \mathbb{E}_\xi\frac{Z_n (-\infty, nx] }{Z_n(\mathbb{R})}
 = -\Lambda^*(x) -\mathbb{E}\log m_0 \mbox{ if } x \in (\Lambda'(t_-), \Lambda'(0)).$$
\end{co}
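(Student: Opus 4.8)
The plan is to sandwich $\mathbb{E}_\xi\frac{Z_n(nA)}{Z_n(\mathbb{R})}$ between the quenched random quantity $Z_n(nA)$ and the quenched mean $\mathbb{E}_\xi Z_n(nA)$, after controlling the denominator. The only fact needed about the denominator is that $\frac1n\log Z_n(\mathbb{R})\to\mathbb{E}\log m_0$ a.s.: indeed $Z_n(\mathbb{R})/P_n\to W$ with $W>0$ a.s.\ under the standing hypotheses, and $\frac1n\log P_n=\frac1n\sum_{i=0}^{n-1}\log m_i\to\mathbb{E}\log m_0$ by the ergodic theorem. This is what produces the additive shift $-\mathbb{E}\log m_0$ in both bounds.

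For the lower bound I may assume $\inf_{x\in A^o}\tilde\Lambda^*(x)<\infty$. Fixing $c>\inf_{A^o}\tilde\Lambda^*$ and $\varepsilon>0$, I work on the event $E_n=\{Z_n(nA)\ge e^{-cn}\}\cap\{Z_n(\mathbb{R})\le e^{(\mathbb{E}\log m_0+\varepsilon)n}\}$. By the lower bound of Theorem \ref{LTT2.1.5}, for a.e.\ $\xi$ one has $\mathbb{P}_\xi$-a.s.\ $Z_n(nA)\ge e^{-cn}$ for all large $n$; combined with $\frac1n\log Z_n(\mathbb{R})\to\mathbb{E}\log m_0$ this gives $\mathbb{P}_\xi(E_n)\to1$. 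Since $\frac{Z_n(nA)}{Z_n(\mathbb{R})}\ge e^{-(c+\mathbb{E}\log m_0+\varepsilon)n}$ on $E_n$, one obtains $\frac1n\log\mathbb{E}_\xi\frac{Z_n(nA)}{Z_n(\mathbb{R})}\ge -(c+\mathbb{E}\log m_0+\varepsilon)+\frac1n\log\mathbb{P}_\xi(E_n)$; letting $n\to\infty$, then $\varepsilon\downarrow0$ and $c\downarrow\inf_{A^o}\tilde\Lambda^*$, yields the lower bound. This half uses $\mathbb{P}_\xi(N\le1)=0$ only through Theorem \ref{LTT2.1.5}, and does not use the moment hypothesis.

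For the upper bound I fix $\varepsilon>0$, set $M_n=e^{(\mathbb{E}\log m_0-\varepsilon)n}$, and split
$$\mathbb{E}_\xi\frac{Z_n(nA)}{Z_n(\mathbb{R})}=\mathbb{E}_\xi\Big[\tfrac{Z_n(nA)}{Z_n(\mathbb{R})};\,Z_n(\mathbb{R})\ge M_n\Big]+\mathbb{E}_\xi\Big[\tfrac{Z_n(nA)}{Z_n(\mathbb{R})};\,Z_n(\mathbb{R})<M_n\Big]=:I_n+II_n.$$
On the first event $\frac{Z_n(nA)}{Z_n(\mathbb{R})}\le M_n^{-1}Z_n(nA)$, so $I_n\le M_n^{-1}\mathbb{E}_\xi Z_n(nA)$ and the upper bound of Theorem \ref{LTT2.1.1} gives $\limsup_n\frac1n\log I_n\le -(\mathbb{E}\log m_0-\varepsilon)-\inf_{\bar A}\Lambda^*$. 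For $II_n$ I use $\frac{Z_n(nA)}{Z_n(\mathbb{R})}\le1$, so $II_n\le\mathbb{P}_\xi(Z_n(\mathbb{R})<M_n)$, and since $P_n\ge e^{(\mathbb{E}\log m_0-\varepsilon/2)n}$ for all large $n$, this is at most $\mathbb{P}_\xi\big(Z_n(\mathbb{R})/P_n<e^{-\varepsilon n/2}\big)$. I then invoke the estimate $C_\xi(s):=\sup_n\mathbb{E}_\xi\big(Z_n(\mathbb{R})/P_n\big)^{-s}<\infty$, valid for a.e.\ $\xi$ and every $s>0$ under $\mathbb{P}_\xi(N\le1)=0$ and $\mathbb{E}_\xi N^{1+\delta}\le K$; by Markov's inequality $II_n\le C_\xi(s)e^{-s\varepsilon n/2}$ for all $s>0$, so $\limsup_n\frac1n\log II_n=-\infty$. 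Combining the two pieces and letting $\varepsilon\downarrow0$ gives $\limsup_n\frac1n\log\mathbb{E}_\xi\frac{Z_n(nA)}{Z_n(\mathbb{R})}\le -\mathbb{E}\log m_0-\inf_{\bar A}\Lambda^*$.

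The main obstacle is the uniform-in-$n$ harmonic-moment bound $\sup_n\mathbb{E}_\xi(Z_n(\mathbb{R})/P_n)^{-s}<\infty$ (for every $s>0$), equivalently the super-polynomial decay of the lower tail of $Z_n(\mathbb{R})/P_n$; note one really needs this for all $s$ because $\inf_{\bar A}\Lambda^*$ can be arbitrarily large. Both hypotheses enter here: $\mathbb{P}_\xi(N\le1)=0$ is essential — it forces $Z_{n+1}(\mathbb{R})\ge 2Z_n(\mathbb{R})$ and makes the event that the $Z_n(\mathbb{R})$ particles all reproduce nearly minimally only super-exponentially likely in $Z_n(\mathbb{R})$ — while $\mathbb{E}_\xi N^{1+\delta}\le K$ lets one propagate the estimate through the recursion relating $\mathbb{E}_\xi(Z_{n+1}(\mathbb{R})/P_{n+1})^{-s}$ to $\mathbb{E}_\xi(Z_n(\mathbb{R})/P_n)^{-s}$ and close the induction on $n$. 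I would establish it by the standard Chernoff / generating-function argument for a supercritical branching process in a random environment with offspring $\ge2$; this step is technical but classical. Granting it, the theorem follows from the two large deviation estimates already proved (Theorems \ref{LTT2.1.1} and \ref{LTT2.1.5}) together with $\frac1n\log Z_n(\mathbb{R})\to\mathbb{E}\log m_0$.
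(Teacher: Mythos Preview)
Your proof is correct and follows essentially the same route as the paper: the paper deduces the corollary from its Theorem \ref{LTT2.1.8}, whose proof (Lemmas \ref{LTL2.4.1} and \ref{LTL2.4.4}) uses exactly your splitting at the threshold $e^{(\mathbb{E}\log m_0-\varepsilon)n}$ for the upper bound and the same $\mathbb{P}_\xi(E_n)\to1$ argument via Theorem \ref{LTT2.1.5} for the lower bound. The harmonic-moment input $\sup_n\mathbb{E}_\xi(Z_n(\mathbb{R})/P_n)^{-s}=\mathbb{E}_\xi W^{-s}\le C_s<\infty$ that you identify as the key technical step is not proved in the paper either; it is quoted as Lemma \ref{LTL2.4.2} from \cite{huang}.
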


Theorem {\ref{LTT2.1.8}} is a combination of Lemmas \ref{LTL2.4.1} and \ref{LTL2.4.4} below.

\begin{lem}[Lower bound]\label{LTL2.4.1}
It is a.s. that for each measurable subset $A$ of
$\mathbb{R}$,
\begin{equation}\label{LTE2.4.1}
 \liminf_{n\rightarrow\infty}\frac{1}{n}\log \mathbb{E}_\xi\left(\frac{Z_n(nA)}{Z_n(\mathbb{R})}\right)
\geq-\inf_{x\in A^o}\tilde{\Lambda}^*(x)-E\log m_0.
\end{equation}
\end{lem}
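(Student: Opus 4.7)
The plan is to combine the quenched almost-sure large deviation lower bound for $Z_n(nA)$ from Theorem \ref{LTT2.1.5} with the growth rate of the total mass $Z_n(\mathbb{R})$ coming from the martingale convergence $Z_n(\mathbb{R})/P_n\to W$, and then lift the resulting pointwise bound to a bound on the quenched expectation via Fatou's lemma. Throughout, set
\begin{equation*}
Y_n:=\frac{Z_n(nA)}{Z_n(\mathbb{R})}, \qquad L:=-\inf_{x\in A^o}\tilde\Lambda^*(x)-\mathbb{E}\log m_0.
\end{equation*}

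First I would establish the pointwise almost sure bound $\liminf_{n\to\infty}\frac{1}{n}\log Y_n\geq L$. Theorem \ref{LTT2.1.5} gives almost surely $\liminf_{n\to\infty}\frac{1}{n}\log Z_n(nA)\geq -\inf_{x\in A^o}\tilde\Lambda^*(x)$. For the denominator, the non-negative martingale $\{Z_n(\mathbb{R})/P_n\}$ converges almost surely to a finite random variable $W$, and the ergodic theorem gives $\frac{1}{n}\log P_n\to \mathbb{E}\log m_0$; together these yield $\limsup_{n\to\infty}\frac{1}{n}\log Z_n(\mathbb{R})\leq \mathbb{E}\log m_0$ almost surely. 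Subtracting the two one-sided bounds produces the claimed pointwise lower bound on $\frac{1}{n}\log Y_n$.

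Next I would transfer this pointwise bound into a bound on the quenched mean $\mathbb{E}_\xi Y_n$ by Fatou's lemma. Fix $\varepsilon>0$ and set $\alpha_n:=Y_n\,e^{-n(L-\varepsilon)}$. Because $\liminf_n\frac{1}{n}\log Y_n\geq L$, eventually $\log Y_n\geq n(L-\varepsilon/2)$, so $\alpha_n\geq e^{n\varepsilon/2}\to+\infty$ almost surely under $\mathbb{P}_\xi$ for $\tau$-a.e.\ $\xi$ (by Fubini). Fatou's lemma under the quenched law then gives
\begin{equation*}
\liminf_{n\to\infty}\mathbb{E}_\xi[\alpha_n] \geq \mathbb{E}_\xi\Bigl[\liminf_{n\to\infty}\alpha_n\Bigr] = +\infty,
\end{equation*}
so $\mathbb{E}_\xi Y_n\geq e^{n(L-\varepsilon)}$ for all $n$ sufficiently large, whence $\liminf_n\frac{1}{n}\log\mathbb{E}_\xi Y_n\geq L-\varepsilon$. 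Letting $\varepsilon\downarrow 0$ concludes.

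The main subtlety is that Fatou transfers pointwise divergence rather than mere positivity, so the $\varepsilon$-margin in the exponent is essential: one cannot take $\varepsilon=0$ in $\alpha_n$ and still guarantee $\alpha_n\to\infty$ almost surely. Apart from this, the argument is structural and requires no new moment conditions beyond those already used in Theorem \ref{LTT2.1.5} and the martingale convergence for $Z_n(\mathbb{R})/P_n$.
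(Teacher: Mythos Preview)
Your proof is correct and follows essentially the same route as the paper: establish the $\mathbb{P}$-a.s.\ pointwise lower bound $\liminf_n\frac{1}{n}\log\bigl(Z_n(nA)/Z_n(\mathbb{R})\bigr)\geq L$ by combining Theorem~\ref{LTT2.1.5} with $\limsup_n\frac{1}{n}\log Z_n(\mathbb{R})\leq\mathbb{E}\log m_0$, disintegrate to get the bound $\mathbb{P}_\xi$-a.s.\ for $\tau$-a.e.\ $\xi$, and then lift to $\mathbb{E}_\xi$. The only cosmetic difference is in the lifting step: the paper uses the indicator bound $\mathbb{E}_\xi Y_n\geq e^{n(L-\varepsilon)}\,\mathbb{P}_\xi\bigl(Y_n\geq e^{n(L-\varepsilon)}\bigr)$ together with $\mathbb{P}_\xi(\cdot)\to 1$, whereas you package the same idea via Fatou applied to $\alpha_n=Y_n e^{-n(L-\varepsilon)}$; both versions are equivalent and require the $\varepsilon$-margin you noted.
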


\begin{proof}[Proof]
By Therorem \ref{LTT2.1.5}, a.s.
$$\liminf_{n\rightarrow\infty}\frac{1}{n}\log Z_n(nA)\geq -\inf_{x\in A^o}\tilde{\Lambda}^*(x),$$
which implies that for each $\varepsilon >0$, a.s.
$$
\lim_{n\rightarrow\infty}\mathbb{P}_\xi \left(\frac{1}{n}\log \frac{Z_n(nA)}{Z_n(\mathbb{R})}\geq-\tilde{\Lambda}^*(x)-\mathbb{E}\log m_0-\varepsilon \right)=1.
$$
Write $f(A)=-\inf_{x\in A^o}\tilde{\Lambda}^*(x)-\mathbb{E}\log m_0$.
Notice that
\begin{eqnarray*}
 \mathbb{E}_\xi\left(\frac{Z_n(nA)}{Z_n(\mathbb{R})} \right)
&\geq&\mathbb{E}_\xi\left(\frac{Z_n(nA)}{Z_n(\mathbb{R})} {1}_{\{\frac{Z_n(nA)}{Z_n(\mathbb{R})}\geq
\exp{(n(f(A)-\varepsilon))} \}} \right)\\
&\geq&\exp{\left(n(f(A)-\varepsilon)\right)}\mathbb{P}_\xi\left(\frac{1}{n}\log
\frac{Z_n(nA)}{Z_n(\mathbb{R})}\geq f(A)-\varepsilon \right).
\end{eqnarray*}
We have a.s.
$$\frac{1}{n}\log \mathbb{E}_\xi\left(\frac{Z_n(nA)}{Z_n(\mathbb{R})} \right)\geq f(A)-\varepsilon+\frac{1}{n}\log\mathbb{P}_\xi\left(\frac{1}{n}\log
\frac{Z_n(nA)}{Z_n(\mathbb{R})}\geq f(A)-\varepsilon \right).$$
Taking inferior limit and letting $\varepsilon\rightarrow 0$, we obtain (\ref{LTE2.4.1}).
\end{proof}

To obtain the upper bound, we need certain moment conditions.

\begin{lem}[\cite{huang}, Theorem 3.1]\label{LTL2.4.2}
If  a.s. $\mathbb{P}_\xi(N\leq1)=0 $ and $\mathbb{E}_\xi N^{1+\delta}\leq K$ for some constants $\delta>0$
  and $K>0$, then for each $s>0$, there exists a constants $C_s>0$ such that $\mathbb{E}_\xi W^{-s}\leq C_s$ a.s..
\end{lem}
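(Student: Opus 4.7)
The plan is to bound $\mathbb{E}_\xi W^{-s}$ via the Laplace-transform identity
$$
\mathbb{E}_\xi W^{-s}\;=\;\frac{1}{\Gamma(s)}\int_0^\infty t^{s-1}\phi_\xi(t)\,dt,\qquad \phi_\xi(t):=\mathbb{E}_\xi e^{-tW},
$$
so the task reduces to showing that $\phi_\xi(t)$ decays superpolynomially as $t\to\infty$, with constants that are deterministic (valid for almost every $\xi$). The starting point is the branching recursion $W=m_0^{-1}\sum_{i=1}^{N}W^{(i)}$, in which, conditionally on $\xi$, the $W^{(i)}$ are i.i.d.\ with the law of $W$ under the shifted environment $T\xi=(\xi_1,\xi_2,\dots)$, and independent of $N$. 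This produces the functional equation $\phi_\xi(t)=\mathbb{E}_\xi[\phi_{T\xi}(t/m_0)^{N}]$, and since $N\ge2$ and $\phi_{T\xi}\le1$, the contractive inequality $\phi_\xi(t)\le\phi_{T\xi}(t/m_0)^{2}$ follows. Iterating yields $\phi_\xi(t)\le\phi_{T^n\xi}(t/P_n)^{2^n}$ for every $n\ge0$.

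To exploit the iteration I need a uniform lower bound $1-\phi_\xi(t_0)\ge\eta_0>0$ at some deterministic $t_0$, valid a.s.\ in $\xi$. Because $N\ge2$ a.s.\ forces $m_0\ge2$, the hypothesis $\mathbb{E}_\xi N^{1+\delta}\le K$ gives $\mathbb{E}_\xi(N/m_0)^{1+\delta}\le K\,2^{-(1+\delta)}$ uniformly in $\xi$; applying a Burkholder / Biggins--Kyprianou argument to the quenched $L^{1+\delta}$-martingale $W_n$ then delivers $\sup_n\mathbb{E}_\xi W_n^{1+\delta}\le C$ with deterministic $C$, and Fatou upgrades this to $\mathbb{E}_\xi W^{1+\delta}\le C$ a.s. Combined with $\mathbb{E}_\xi W=1$, the Paley--Zygmund inequality yields $\xi$-independent constants $c_0,c_1>0$ with $\mathbb{P}_\xi(W\ge c_0)\ge c_1$ a.s., so that $\phi_\xi(t_0)\le 1-c_1(1-e^{-c_0 t_0})\le 1-\eta_0$ once $t_0$ is chosen large enough. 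For a given large $t$, taking $n=n(t,\xi)$ to be the largest integer with $P_n\le t/t_0$ and invoking the ergodic theorem $n^{-1}\log P_n\to\mathbb{E}\log m_0>0$ gives $n\ge c_2\log t$ for $t$ beyond a threshold $t_1(\xi)$, and the iteration therefore produces
$$
\phi_\xi(t)\;\le\;(1-\eta_0)^{2^n}\;\le\;\exp(-c\,t^\alpha),\qquad \alpha=\frac{\log 2}{\mathbb{E}\log m_0}>0,
$$
for $t\ge t_1(\xi)$.

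Splitting the Laplace integral at $t_1(\xi)$ and using $\phi_\xi\le1$ on $[0,t_1(\xi)]$ together with the decay bound on $[t_1(\xi),\infty)$ yields $\mathbb{E}_\xi W^{-s}<\infty$ a.s., but with a constant $C_s(\xi)$ depending on $\xi$ through $t_1(\xi)^s/s$. Upgrading to a deterministic $C_s$ is the main obstacle, and is handled by combining the uniform $L^{1+\delta}$ control on $W$ with the deterministic floor $P_n\ge 2^n$ (from $m_i\ge2$) to produce a deterministic upper bound on $t_1(\xi)$; equivalently, one reorganises the argument so that the initial-segment contribution to the Laplace integral is controlled directly via the uniform bound on $\mathbb{E}_\xi W^{1+\delta}$ (e.g.\ through $\phi_\xi(t)\le e^{-ct}+\mathbb{P}_\xi(W\le c/t)$ combined with a uniform lower-tail estimate derived from Paley--Zygmund applied at successive scales). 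It is precisely the uniformity of the hypothesis $\mathbb{E}_\xi N^{1+\delta}\le K$ that enables this final step, since it ensures the constants $\eta_0,c_0,c_1$ and the $L^{1+\delta}$ bound on $W$ are all free of $\xi$; dropping the uniformity in that hypothesis would force one to track $\xi$-dependent constants throughout the iteration and would preclude the deterministic conclusion.
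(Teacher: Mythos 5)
The route you take --- the Laplace-transform identity $\mathbb{E}_\xi W^{-s}=\Gamma(s)^{-1}\int_0^\infty t^{s-1}\phi_\xi(t)\,dt$, the branching recursion $\phi_\xi(t)=\mathbb{E}_\xi\bigl[\phi_{T\xi}(t/m_0)^N\bigr]$, the contraction $\phi_\xi(t)\le\phi_{T\xi}(t/m_0)^2$ coming from $N\ge 2$ a.s., its iterate $\phi_\xi(t)\le\phi_{T^n\xi}(t/P_n)^{2^n}$, and the uniform lower bound $1-\phi_\xi(t_0)\ge\eta_0$ via an $L^{1+\delta}$ Paley--Zygmund estimate backed by the $\xi$-uniform bound on $\mathbb{E}_\xi W^{1+\delta}$ --- is the standard one and is essentially the argument of the cited reference (Huang--Liu 2012, Theorem~3.1). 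Up through the iteration, the proposal is sound.

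The gap is in the final step, where you try to make the superpolynomial decay of $\phi_\xi$ uniform in $\xi$. Taking $n=n(t,\xi)$ to be the largest integer with $P_n\le t/t_0$, you need a \emph{lower} bound $n\ge c\log t$ with deterministic $c$, and this is not what you establish. The inequality $P_n\ge 2^n$ that you invoke runs the wrong way: it yields only $n\le\log_2(t/t_0)$, which does nothing to make $(1-\eta_0)^{2^n}$ small. What is actually required is a deterministic \emph{upper} bound on $m_0$, and the hypotheses do supply one: Jensen's inequality applied to $\mathbb{E}_\xi N^{1+\delta}\le K$ gives $m_0=\mathbb{E}_\xi N\le K^{1/(1+\delta)}=:M$ a.s., hence $P_{n+1}\le M^{n+1}$, and maximality of $n$ forces $M^{n+1}>t/t_0$, i.e.\ $n\ge\log_M(t/t_0)-1$ deterministically --- no ergodic theorem, no $\xi$-dependent threshold $t_1(\xi)$. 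This gives $\phi_\xi(t)\le\exp(-c\,t^\alpha)$ with $\alpha=\log 2/\log M$ and deterministic $c$ (your claimed exponent $\log 2/\mathbb{E}\log m_0$, obtained from the ergodic theorem, cannot come with deterministic constants). Your alternative fallback, bounding $\mathbb{P}_\xi(W\le c/t)$ by ``Paley--Zygmund at successive scales,'' also does not close the gap: a single Paley--Zygmund application gives $\mathbb{P}_\xi(W\le c_0)\le 1-c_1$ at one fixed level, not a quantitative decay of the lower tail as the threshold shrinks; that decay is exactly what the iteration delivers once $n$ is correctly bounded from below. The missing ingredient, then, is simply the observation $m_0\le K^{1/(1+\delta)}$.
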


\begin{lem}\label{LTL2.4.3}
If  a.s. $\mathbb{P}_\xi(N\leq1)=0 $ and $\mathbb{E}_\xi N^{1+\delta}\leq K$ for some constants $\delta>0$
  and $K>0$, then a.s.
\begin{equation}\label{LTE2.4.2}
\lim _{n\rightarrow\infty}\frac{1}{n}\log \mathbb{P}_\xi(Z_n(\mathbb{R})\leq
e^{(\mathbb{E}\log m_0-\varepsilon)n})=-\infty.
\end{equation}
\end{lem}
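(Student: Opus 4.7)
The plan is to bound the probability via Chebyshev's inequality applied to negative moments of $Z_n(\mathbb{R})$. For any $s>0$,
\[
\mathbb{P}_\xi\bigl(Z_n(\mathbb{R})\leq e^{n(\mathbb{E}\log m_0-\varepsilon)}\bigr)
\leq e^{sn(\mathbb{E}\log m_0-\varepsilon)}\,\mathbb{E}_\xi Z_n(\mathbb{R})^{-s}
= e^{sn(\mathbb{E}\log m_0-\varepsilon)}\,P_n^{-s}\,\mathbb{E}_\xi W_n^{-s}.
\]
By the ergodic theorem $\frac{1}{n}\log P_n\to \mathbb{E}\log m_0$ a.s., so $P_n^{-s}=e^{-sn\,\mathbb{E}\log m_0+o(n)}$. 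Hence, once $\mathbb{E}_\xi W_n^{-s}$ is controlled uniformly in $n$ by a constant $C_s$, the right-hand side is at most $C_s\, e^{-sn\varepsilon+o(n)}$, yielding $\limsup_n \tfrac{1}{n}\log\mathbb{P}_\xi(Z_n(\mathbb{R})\leq e^{n(\mathbb{E}\log m_0-\varepsilon)})\leq -s\varepsilon$. Letting $s\to\infty$ along integers (a countable sequence, which excludes only countably many $\xi$-null sets) then gives the claim.

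The main obstacle is the uniform negative-moment bound $\sup_n \mathbb{E}_\xi W_n^{-s}<\infty$. To obtain it I would use the branching decomposition at generation $n$: writing $W^{(u)}$ for the martingale limit of the subtree rooted at $u\in\mathbb{T}_n$ in the shifted environment $T^n\xi$, one has
\[
W = W_n\,\bar W_n, \qquad \bar W_n := \frac{1}{Z_n(\mathbb{R})}\sum_{u\in\mathbb{T}_n} W^{(u)}.
\]
Conditionally on $\mathcal{F}_n$, the $W^{(u)}$'s are i.i.d.\ copies of the martingale limit in environment $T^n\xi$, each with conditional mean $1$ (thanks to the $L\log L$-type assumption in (\ref{LTE1.2})), so $\mathbb{E}_\xi[\bar W_n\mid \mathcal{F}_n]=1$. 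Since $x\mapsto x^{-s}$ is convex on $(0,\infty)$, Jensen's inequality gives $\mathbb{E}_\xi[\bar W_n^{-s}\mid \mathcal{F}_n]\geq 1$ a.s., and therefore
\[
\mathbb{E}_\xi W^{-s} = \mathbb{E}_\xi\bigl[W_n^{-s}\,\mathbb{E}_\xi[\bar W_n^{-s}\mid \mathcal{F}_n]\bigr] \geq \mathbb{E}_\xi W_n^{-s}.
\]
Combined with Lemma~\ref{LTL2.4.2} this gives $\mathbb{E}_\xi W_n^{-s}\leq \mathbb{E}_\xi W^{-s}\leq C_s$ a.s., uniformly in $n$, which closes the Chebyshev bound.

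Apart from this transfer of negative moments from $W$ to $W_n$, every step is routine: the Chebyshev inequality, the ergodic asymptotics of $P_n$, and the final optimization over $s$ are standard, and the argument neatly avoids any need to verify uniform integrability of the submartingale $W_n^{-s}$.
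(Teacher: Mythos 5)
Your proof is correct and follows essentially the same route as the paper: Markov's inequality on $W_n^{-s}$, the uniform negative-moment bound from Lemma~\ref{LTL2.4.2}, the ergodic asymptotics of $P_n$, and optimization over $s\to\infty$. The one place you add genuine content is in justifying $\sup_n\mathbb{E}_\xi W_n^{-s}\leq \mathbb{E}_\xi W^{-s}$ via the branching decomposition $W=W_n\bar W_n$ and conditional Jensen; the paper simply asserts $\sup_n \mathbb{E}_\xi W_n^{-s}=\mathbb{E}_\xi W^{-s}$ without argument, so your derivation usefully fills that gap.
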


\begin{proof}[Proof]
Denote $W_n= Z_n(\mathbb{R})/P_n$.
Notice that $\forall s>0$, $\sup_n\mathbb{E}_\xi W_n^{-s}=\mathbb{E}_\xi W^{-s}$.  Lemma \ref{LTL2.4.2} shows that
$\mathbb{E}_\xi W^{-s}<\infty$ a.s.. By Markov's inequality, a.s.
\begin{eqnarray*}
&&\mathbb{P}_\xi\left(Z_n(\mathbb{R})\leq e^{(\mathbb{E}\log m_0-\varepsilon)n}\right)\\
&\leq&\mathbb{E}_\xi W_n^{-s}\exp{\left(s\left((\mathbb{E}\log
m_0-\varepsilon)n-\sum_{i=0}^{n-1}\log
m_i\right)\right)}\\
&\leq&\mathbb{E}_\xi W^{-s}\exp{\left(s\left((\mathbb{E}\log
m_0-\varepsilon)n-\sum_{i=0}^{n-1}\log
m_i\right)\right)}.\\
\end{eqnarray*}
Hence a.s.
$$\frac{1}{n}\log \mathbb{P}_\xi\left(Z_n(\mathbb{R})\leq e^{(\mathbb{E}\log m_0-\varepsilon)n}\right)
\leq\frac{1}{n}\log \mathbb{E}_\xi W^{-s}+s\left(\mathbb{E}\log
m_0-\varepsilon-\frac{1}{n}\sum_{i=0}^{n-1}\log m_i\right).$$ Taking
superior limit, we get a.s.
$$\limsup_{n\rightarrow\infty}\frac{1}{n}\log
\mathbb{P}_\xi\left(Z_n(\mathbb{R})\leq e^{(E\log m_0-\varepsilon)n}\right)
\leq-\varepsilon s.$$
Letting $s\rightarrow \infty$, we obtain (\ref{LTE2.4.2}).
\end{proof}

\begin{lem}[Upper bound]\label{LTL2.4.4}  If  a.s. $\mathbb{P}_\xi(N\leq1)=0 $ and $\mathbb{E}_\xi N^{1+\delta}\leq K$ for some constant $\delta>0$
  and $K>0$,
then it is a.s. that for each measurable subset $A$ of
$\mathbb{R}$,
\begin{equation}\label{LTE2.4.3}
 \limsup_{n\rightarrow\infty}\frac{1}{n}\log \mathbb{E}_\xi\left(\frac{Z_n(nA)}{Z_n(\mathbb{R})}\right)
\leq-\inf_{x\in \bar A}{\Lambda}^*(x)-E\log m_0.
\end{equation}
\end{lem}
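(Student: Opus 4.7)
The plan is to split the expectation $\mathbb{E}_\xi(Z_n(nA)/Z_n(\mathbb{R}))$ according to whether the total mass $Z_n(\mathbb{R})$ is at its typical exponential size or atypically small. Fix $\varepsilon>0$ and set $c_n=\exp((\mathbb{E}\log m_0-\varepsilon)n)$; I write
$$\mathbb{E}_\xi\frac{Z_n(nA)}{Z_n(\mathbb{R})}=I_n+J_n,$$
where $I_n$ is the contribution from $\{Z_n(\mathbb{R})>c_n\}$ and $J_n$ that from its complement. The heuristic is that $I_n$ captures the typical behaviour (the denominator grows at rate $\mathbb{E}\log m_0$, as it should), while $J_n$ lives on an exceedingly rare event whose probability the previous lemmas already control super-exponentially.

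On $\{Z_n(\mathbb{R})>c_n\}$ I bound the denominator below by $c_n$, obtaining
$$I_n\leq c_n^{-1}\,\mathbb{E}_\xi Z_n(nA).$$
Theorem \ref{LTT2.1.1} furnishes $\limsup_n\frac{1}{n}\log\mathbb{E}_\xi Z_n(nA)\leq -\inf_{x\in\bar A}\Lambda^*(x)$ almost surely, which combined with the bound on $c_n^{-1}$ yields
$$\limsup_{n\to\infty}\frac{1}{n}\log I_n\leq -\mathbb{E}\log m_0+\varepsilon-\inf_{x\in\bar A}\Lambda^*(x)\quad\text{a.s.}$$
On the complementary event I use the trivial inequality $Z_n(nA)/Z_n(\mathbb{R})\leq 1$ to get $J_n\leq\mathbb{P}_\xi(Z_n(\mathbb{R})\leq c_n)$, and Lemma \ref{LTL2.4.3} gives $\frac{1}{n}\log J_n\to -\infty$ a.s. Applying $\log(I_n+J_n)\leq\log 2+\max(\log I_n,\log J_n)$, taking $\limsup$, and finally letting $\varepsilon\downarrow 0$ produces the claimed bound \eqref{LTE2.4.3}.

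The genuinely delicate point — controlling the bad event that $Z_n(\mathbb{R})$ is exponentially smaller than its mean — has in fact already been resolved upstream. The hypotheses $\mathbb{P}_\xi(N\leq 1)=0$ and $\mathbb{E}_\xi N^{1+\delta}\leq K$ feed into Lemma \ref{LTL2.4.2} to give uniformly bounded negative moments of the martingale limit $W$, which Markov's inequality upgrades, in Lemma \ref{LTL2.4.3}, to the super-exponential lower-tail estimate used above. Without such an estimate, the factor $1/Z_n(\mathbb{R})$ on the bad event could a priori contribute at an exponential rate and spoil the upper bound. With those tools in hand, what remains is the routine assembly just sketched; the only matter requiring a little care is that Theorem \ref{LTT2.1.1} is an a.s. statement in $\xi$, so I work throughout on the full-measure set where both that theorem and Lemma \ref{LTL2.4.3} apply.
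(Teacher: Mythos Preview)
Your proof is correct and follows essentially the same approach as the paper: the same splitting at the threshold $e^{(\mathbb{E}\log m_0-\varepsilon)n}$, the same use of Theorem~\ref{LTT2.1.1} for the typical part and Lemma~\ref{LTL2.4.3} for the atypical part, followed by letting $\varepsilon\downarrow 0$. The only cosmetic difference is that you write the combination step explicitly via $\log(I_n+J_n)\leq\log 2+\max(\log I_n,\log J_n)$, whereas the paper writes the sum directly inside the logarithm and then passes to the max of the limsups.
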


\begin{proof}[Proof]
Notice that for each $\varepsilon>0$, a.s.
\begin{eqnarray*}
\mathbb{E}_\xi\left(\frac{Z_n(nA)}{Z_n(\mathbb{R})}\right)
&=&\mathbb{E}_\xi\left(\frac{Z_n(nA)}{Z_n(\mathbb{R})} {1}_{\{Z_n(\mathbb{R})>e^{(\mathbb{E}\log
m_0-\varepsilon)n}\}} \right)+\mathbb{E}_\xi\left(\frac{Z_n(nA)}{Z_n(\mathbb{R})}
 {1}_{\{Z_n(\mathbb{R})\leq e^{(\mathbb{E}\log
m_0-\varepsilon)n}\}} \right)\\
&\leq&e^{-(\mathbb{E}\log
m_0-\varepsilon)n}\mathbb{E}_\xi Z_n(nA)+\mathbb{P}_\xi\left(Z_n(\mathbb{R})\leq
e^{(\mathbb{E}\log m_0-\varepsilon)n}\right).
\end{eqnarray*}
Hence a.s.
$$\frac{1}{n}\log \mathbb{E}_\xi\left(\frac{Z_n(nA)}{Z_n(\mathbb{R})}\right)
\leq \frac{1}{n}\log \left(e^{-(\mathbb{E}\log
m_0-\varepsilon)n}\mathbb{E}_\xi Z_n(nA)+\mathbb{P}_\xi\left(Z_n(\mathbb{R})\leq
e^{(\mathbb{E}\log m_0-\varepsilon)n}\right)\right).$$
Taking superior limit in the above inequality, and using Theorem \ref{LTT2.1.1} and Lemma \ref{LTL2.4.3}, we obtain a.s.
\begin{eqnarray*}
&&\limsup_{n\rightarrow\infty}\frac{1}{n}\log
\mathbb{E}_\xi\left(\frac{Z_n(nA)}{Z_n(\mathbb{R})} \right)\\
&\leq&\max\left\{\limsup_{n\rightarrow\infty}\frac{1}{n}\log \mathbb{E}_\xi Z_n(nA)-\mathbb{E}\log m_0+\varepsilon,
\;\limsup_{n\rightarrow\infty}\frac{1}{n}\log{\mathbb{P}_\xi\left(Z_n(\mathbb{R})\leq
e^{(\mathbb{E}\log m_0-\varepsilon)n}\right)}\right\}\\
&=&\max\{-\inf_{x\in \bar{A}}\Lambda^*(x)-\mathbb{E}\log m_0+\varepsilon,
\;-\infty\}
=-\inf_{x\in \bar{A}}\Lambda^*(x)-\mathbb{E}\log m_0+\varepsilon.
\end{eqnarray*}
Then let $\varepsilon \rightarrow 0$.
\end{proof}

\section{Branching random walk in varying environment}\label{LTS6}
Kaplan and Asmussen (1976, \cite{ka}) showed that under certain moment conditions, the probability measures $\frac{Z_n(b_n\cdot+a_n)}{Z_n(\mathbb{R})}$ satisfy a central limit  and a local limit theorem for a branching random walk in deterministic environment for some sequence $(a_n, b_n)$.  Biggins (1990, \cite{b}) proved the same results under weaker moments conditions. We want to generalize these results to branching random walk with random environment in time. But instead of studying the case of random environment directly, we first introduce branching random walk with varying environment  in time and give some related results.

A branching random walk with a varying environment in time is modeled in a similar way as the branching random walk with a random environment in time.  Let $\{X_n\}$ be a sequence of point processes on $\mathbb{R}$.  The distribution of $X_n$ is denoted by $\eta_n$. At time $0$, there is an initial particle $\emptyset$ of generation $0$ located at $S_{\emptyset}=0$;
at time $1$, it is replaced by $N=N({\emptyset})$ particles of generation $1$, located at $L_i=L_i(\emptyset)$, $1\leq i\leq N$, where the point process
$X_{\emptyset}=(N, L_1, L_{2},\cdots)$ is an independent copy of $X_0$ . In general, each particle $u=u_1\cdots u_n$ of generation $n$ located at $S_u$ is replaced at
time {n+1} by $N(u)$ new particles $ui$ of generation $n+1$, located at
$$S_{ui}=S_u+L_i(u)\qquad(1\leq i\leq N(u)),$$
where the point process formulated by the number of offspring and there displacements, $\{X(u)=(N(u),L_1(u),L_2(u),\cdots)\}$, is an independent copy of $X_n$.
All particles behave independently, namely, the point processes  $\{X(u)\}$ are independent of each other. In particular,  $\{X(u): u\in\mathbb{T}_n\}$ are independent  of each other and have a common distribution  $\eta_n$.

Let
$Z_n=\sum_{u\in\mathbb{T}_n}\delta_{S_u}$
be the counting measure of particles of generation $n$. As the case of random environment, we introduce the following notations:
\begin{equation}\label{LTE6.1}
N_n=X_n(\mathbb{R}),\qquad m_n=\mathbb{ \mathbb{E}}N_n,
 \qquad P_0=1\quad \text{and}\quad P_n=\mathbb{E}Z_n(\mathbb{R})=\prod_{i=0}^{n-1}m_i.
\end{equation}
Assume that
\begin{equation}\label{LTE1.1.1}
0<m_n<\infty,\quad\liminf_{n\rightarrow\infty}\frac{1}{n}\log P_{n}>0\quad \text{and}\quad \liminf_{n\rightarrow\infty}\frac{1}{n}\log
m_n=0.
\end{equation}
Thus for some $c>1$, there exists an integer $n_0$ depending on $c$
such that
\begin{equation}\label{LTE1.1.2}
P_{n}>c^n \qquad\text{for all}\;n>n_0
\end{equation}

Denote $\Gamma$ the probability space under which the process is defined. Let  $\mathcal {F}_0=\{\emptyset, \Gamma\}$ and  $\mathcal {F}_n=\sigma((N(u), L_1(u), L_2(u),\cdots): |u|<n)$ for $n\geq 1$ be the $\sigma$-field containing all the
information concerning the first $n$ generations, then the sequence
$\{Z_n(\mathbb{R})/P_{n}\}$ forms a non-negative martingale with respect
to the filtration $\mathcal {F}_n$ and converges a.s. to a random variable $W$.

Let $\nu_n$ be the intensity measure of the point process $\frac{X_n}{m_n}$ in the sense that for a subset $A$ of $\mathbb{R}$,
$$\nu_n(A)=\frac{\mathbb{E}X_n(A)}{m_n},$$ and let
$\phi_n$  be the corresponding characteristic function, i.e.
\begin{equation}
\phi_n(t)=\int e^{itx}\nu_n(dx)=\frac{1}{m_n} \mathbb{E}\int e^{itx}X_n(dx).
\end{equation}
The characteristic function of $\frac{Z_{n}}{P_{n}}$ is defined
as
\begin{equation}
\Psi_{n}(t)=\frac{1}{P_{n}}\int e^{itx}Z_{n}(dx)=\frac{1}{P_{n}}\sum_{u\in\mathbb{T}_n}e^{itS_u}.
\end{equation}
It is not difficult to see that $\phi_i $ and $\Psi_{n}$ have the following relation:
\begin{equation}
\mathbb{E}\Psi_{n}(t)=\prod_{i=0}^{n-1}\phi_i(t).
\end{equation}

Furthermore, denote
\begin{equation}
\upsilon_n(\varepsilon)=\sum_{i=0}^{n-1}\int|x|^{\varepsilon}\nu_i(dx).
\end{equation}

\noindent\textbf{Condition (A)}. \emph{There is a non-degenerate probability distribution
$L(x)$ and constants $ \{a_n,b_n\}$ with $b_n\rightarrow\infty$
such that}
\begin{equation*}
e^{-ita_n}\prod_{i=0}^{n-1}\phi_i(t/b_n)\rightarrow g(t)=\int e^{itx}L(dx).
\end{equation*}

Similar conditions were posed by Klebaner (1982, \cite{k})  and Biggins (1990, \cite{b}). If additionally $b_{n+1}/b_n\rightarrow1$,
then the limit distribution would be in What Feller (1971, \cite{fel}) calls the class $L$, also known as the self-decomposable distributions.
\\*

Denote $G_n(x)=\nu_0\ast\cdots\ast\nu_{n-1}(x)$, we introduce
another condition:
 \\*

\noindent\textbf{Condition (B)}. \emph{There exist constants $ \{a_n,b_n\}$ with
$b_n\rightarrow\infty$ such that $G_n(b_nx+a_n)$ converges to
a non-degenerate probability distribution $L(x)$ }.

It is clear that if (B) holds with $ \{a_n,b_n\}$, then (A) holds with
$a_n'=\frac{a_n+o(b_n)}{b_n} $ and $b_n'=b_n$.  Let $\mu_n=\int
x\nu_n(dx) $ and $\sigma_n^2=\int|x-\mu_n|^2\nu_n(dx)$.   Take $a_n=\sum_{i=0}^{n-1}\mu_i$ and
$b_n=(\sum_{i=0}^{n-1}\sigma_i^2)^{1/2}$, if moreover $b_n$ satisfying
$b_{n+1}/b_n\rightarrow 1$, then $G_n(b_nx+a_n)\rightarrow L(x)$. In
particular, if $\{\nu_n\}$ satisfies Lindeberg or Liapounoff
conditions, then the limiting distribution $L$ is standard normal,
i.e.
$L(x)=\Phi(x)=\frac{1}{\sqrt{2\pi}}\int_{-\infty}^xe^{-t^2/2}dt$.
\\*

We have the following result:

\begin{thm}\label{LTT1.1.1}
For a branching random walk in a varying environment
satisfying (\ref{LTE1.1.1}), assume that for some $\delta>0$,
\begin{equation}\label{LTE1.1.3}
\sum_n\frac{1}{m_nn(\log n)^{1+\delta}}\mathbb{E}N_n\log^+N_n
(\log^+\log^+N_n)^{1+\delta}<\infty,
\end{equation}
for some $\varepsilon>0$ and $\gamma_1<\infty$,
\begin{equation}\label{LTE1.1.41}
\upsilon_n(\varepsilon)=o(n^{\gamma_1}),
\end{equation}
and for some $\gamma_2>0$,
\begin{equation}\label{LTE1.1.4}
b_n^{-1}=o(n^{-\gamma_2}),
\end{equation}
then
\begin{equation}\label{LTE1.1.5}
\Psi_{n}(t/b_n)-W\prod_{i=0}^{n-1}\phi_i(t/b_n)\rightarrow0\qquad a.s..
\end{equation}
If in addition (A) holds, then
\begin{equation}\label{LTE1.1.6}
 e^{-ita_n}\Psi_{n}(t/b_n)\rightarrow g(t)W\qquad a.s.,
\end{equation}
 and for $x$ a continuity point of $L$,
\begin{equation}\label{LTE1.1.7}
P_n^{-1}Z_n(-\infty,b_n(x+a_n)]\rightarrow L(x)W\qquad
a.s..
\end{equation}
The null set can be taken to be independent of $t$ in (\ref{LTE1.1.6}) and $x$ in (\ref{LTE1.1.7})
respectively, and (\ref{LTE1.1.6}) holds uniformly for $t$ in compact sets.
\end{thm}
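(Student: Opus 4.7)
The strategy follows the lines of Biggins (1990) and Klebaner (1982), adapted to the varying environment. Write $W_n=Z_n(\mathbb{R})/P_n$, so $W_n\to W$ a.s. The key tool is the branching decomposition: for any $k<n$, setting $P_{k,n}=\prod_{i=k}^{n-1}m_i$,
\begin{equation*}
\Psi_n(t)=\frac{1}{P_k}\sum_{v\in\mathbb{T}_k}e^{itS_v}\,\tilde\Psi^{(v)}_{k,n}(t),\qquad \tilde\Psi^{(v)}_{k,n}(t)=\frac{1}{P_{k,n}}\sum_{u\in\mathbb{T}_n,\,v\leq u}e^{it(S_u-S_v)},
\end{equation*}
where, conditionally on $\mathcal{F}_k$, the $\tilde\Psi^{(v)}_{k,n}$ are independent with common mean $\prod_{i=k}^{n-1}\phi_i(t)$. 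Everything reduces to controlling the fluctuation of $\tilde\Psi^{(v)}_{k,n}$ around that conditional mean.

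To prove (\ref{LTE1.1.5}), I would fix $t$ and let $t_n=t/b_n\to 0$. The hypotheses (\ref{LTE1.1.41}) and (\ref{LTE1.1.4}) give $|1-\phi_i(t_n)|\leq C|t_n|^{\varepsilon\wedge 1}\int|x|^{\varepsilon}\nu_i(dx)$, so the product $\prod_i\phi_i(t_n)$ remains bounded away from $0$ at polynomial rate. The decomposition together with conditional independence of subtrees yields
\begin{equation*}
\mathbb{E}\bigl[\,|\Psi_n(t_n)-W_k\prod\nolimits_{i=0}^{n-1}\phi_i(t_n)|^2\,\big|\,\mathcal{F}_k\,\bigr]=\frac{1}{P_k^2}\sum_{v\in\mathbb{T}_k}\mathrm{Var}\bigl(\tilde\Psi^{(v)}_{k,n}(t_n)\,\big|\,\mathcal{F}_k\bigr),
\end{equation*}
and each inner variance is, by second-moment accounting along successive generations, of order $P_{k,n}^{-1}$ up to constants built from the $\nu_i$ and a truncated version of $\mathbb{E}N_n^2$; the supercriticality $P_n>c^n$ from (\ref{LTE1.1.2}) makes this summable. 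Truncating at $N_n\wedge K$ combined with the logarithmic moment condition (\ref{LTE1.1.3})---the natural analog of $\mathbb{E}N\log N<\infty$---permits a Borel--Cantelli argument upgrading $L^2$ convergence to a.s. convergence along the full sequence, and letting $k\to\infty$ with $W_k\to W$ a.s. gives (\ref{LTE1.1.5}).

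For (\ref{LTE1.1.6}) with the null set independent of $t$ and uniform convergence on compacts, I would establish equicontinuity of $t\mapsto e^{-ita_n}\Psi_n(t/b_n)$ on each compact: differentiation bounds $|\partial_t\Psi_n(t/b_n)|$ by $b_n^{-1}P_n^{-1}\sum_{u\in\mathbb{T}_n}|S_u|$, whose expectation is controlled by $b_n^{-1}\sum_{i=0}^{n-1}\int|x|\nu_i(dx)$ and hence stays bounded under (\ref{LTE1.1.41})--(\ref{LTE1.1.4}). Combined with a.s. convergence on a countable dense set of $t$, equicontinuity delivers uniform convergence on compacts on a single full-measure event, and condition (A) identifies the limit as $g(t)W$. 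Finally, (\ref{LTE1.1.7}) follows on $\{W>0\}$ from the L\'evy continuity theorem applied to the random probability measures $Z_n(b_n\cdot+a_n)/Z_n(\mathbb{R})$, whose characteristic functions $e^{-ita_n}\Psi_n(t/b_n)/W_n$ converge a.s. to $g(t)$; on $\{W=0\}$ both sides vanish in the limit.

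The main obstacle is the $L^2\to$ a.s. upgrade under the weak moment hypothesis (\ref{LTE1.1.3}): in the deterministic-environment case one exploits the exact complex martingale $\Psi_n(t)/\prod_i\phi_i(t)$, but here the rescaling $t\to t/b_n$ coupled with varying $m_i$ destroys the exact martingale structure, forcing a careful interplay between the truncation level, the rate $b_n^{-1}=o(n^{-\gamma_2})$, and the $\log^+N_n(\log^+\log^+N_n)^{1+\delta}$ tail condition built into (\ref{LTE1.1.3}).
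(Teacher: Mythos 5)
Your proposal captures the right spirit (decomposition at an intermediate generation $k=k(n)$, subtree independence, truncation, $L^2$-to-a.s.\ via Borel--Cantelli), but there are concrete gaps at three points. First, the displayed conditional-variance identity is wrong as stated: the conditional mean of $\Psi_n(t_n)$ given $\mathcal{F}_k$ is $\Psi_k(t_n)\prod_{i=k}^{n-1}\phi_i(t_n)$, not $W_k\prod_{i=0}^{n-1}\phi_i(t_n)$, so the left side with your centering is not equal to the sum of conditional variances; passing from $\Psi_k(t_n)$ to $W_k=Z_k(\mathbb{R})/P_k$ is a nontrivial extra step (the paper's Lemma \ref{LTL1.2.5}) that uses the H\"older estimate $|e^{itx}-1|\leq C|tx|^\varepsilon$ together with $\upsilon_n(\varepsilon)=o(n^{\gamma_1})$ and $b_n^{-1}=o(n^{-\gamma_2})$, and hence forces the specific coupling $k=J(n)$ with $k^\alpha\sim n$. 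Second, a fixed-level truncation $N_n\wedge K$ does not work: the moment hypothesis (\ref{LTE1.1.3}) is tailored to the \emph{adaptive} truncation $I_n(x)=\mathbf{1}_{\{x(\log x)^\kappa\leq P_{n+1}\}}$ (level growing with $P_{n+1}$), which is what makes the truncation error $\sum_n n^{\beta}(1-\tilde m_{n,\kappa}/m_n)$ summable (Lemma \ref{LTL1.2.1}) and the second moment $\mathbb{E}N_n^2 I_n(N_n)$ controllable (Lemma \ref{LTL1.2.4}); with a fixed $K$ neither sum is controlled by (\ref{LTE1.1.3}).

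Third, the equicontinuity argument via $|\partial_t\Psi_n(t/b_n)|\leq b_n^{-1}P_n^{-1}\sum_{u\in\mathbb{T}_n}|S_u|$ presupposes finite first moments $\int|x|\,\nu_i(dx)$, but (\ref{LTE1.1.41}) only gives $\varepsilon$-th moments for some possibly small $\varepsilon>0$. One must instead work with the H\"older modulus $|e^{itx}-1|\leq C|tx|^\varepsilon$ (as the paper does, attributing the uniformity and null-set statements to Biggins (1990, Theorem 2)). Finally, note that the paper does not use the single-jump subtree decomposition you wrote; it iterates a one-generation-at-a-time identity $\Psi_{i+1}-\omega_i\zeta_i\Psi_i=A_i+B_i$ and telescopes from $k$ to $n$, so that the fluctuation decomposes into a sum of martingale differences $B_i$ (controlled by Borel--Cantelli on $\sum_n\mathbb{E}|C_n|^2$) plus a truncation remainder $A_i$ (controlled by a.s.\ summability). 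Your single-jump decomposition could in principle be pushed through, but the variance of $\tilde\Psi^{(v)}_{k,n}$ would still require the same iterated second-moment bookkeeping with the adaptive truncation, so it would not be simpler.
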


\noindent \emph{Remark.} The above conclusions were obtained by Biggins (1990, \cite{b}, Theorem 1 and 2)
under similar hypothesis with (\ref{LTE1.1.3}) replaced by a condition $\int x\log x F(dx)<\infty$, where $F(x):=\sum_{k=0}^{[x]}\sup_nP(N_n=k)$.
In homogeneous case, $F$ is simply the distribution function which determines the offspring's number, but in general, $F$ has not such a concrete expression as (\ref{LTE1.1.3}).
\\*

The following theorem is a local limit theorem. We use the notation $a_n\sim b_n$ to
 signify that $a_n/b_n\rightarrow1$ as $n\rightarrow\infty$.

 \begin{thm}\label{LTT1.2}
For a branching random walk in a varying environment satisfying (\ref{LTE1.1.1}),
assume that (A) holds with $b_n\sim \theta n^\gamma $ for some constants
$0<\gamma\leq\frac{1}{2}$ and $\theta>0$,  $g$ is integrable
and for some $\iota>0$,
\begin{equation}\label{LTET1.1.21}
\sup_i\sup_{ |u|\geq\iota}|\phi_i(t)|=:c_\iota<1.
\end{equation}
If (\ref{LTE1.1.41}) holds and
\begin{equation}\label{LTET1.1.22}
\sum_n\frac{1}{m_nn(\log n)^{1+\delta}} \mathbb{E}N_n(\log^+N_n)^{1+\beta}
<\infty
\end{equation}
for some $\delta>0$ and $\beta>\gamma$, then
\begin{equation}
\sup_{x\in \mathbb{R}}\left|b_nP_n^{-1}Z_n(x,x+h)-Whp_L(x/b_n-a_n)\right|\rightarrow0\qquad
a.s.,
\end{equation}
where $p_L(x)$ denotes the density function of $L$.
\end{thm}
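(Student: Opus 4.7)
The strategy is Fourier inversion combined with the characteristic-function convergence (\ref{LTE1.1.6}) at low frequencies and the contraction estimate (\ref{LTET1.1.21}) at high frequencies, with an Ess\'een-type smoothing that reduces the indicator $\mathbf{1}_{[x,x+h]}$ to a band-limited test function. Since $g$ is integrable, Fourier inversion produces a bounded continuous density $p_L(y)=(2\pi)^{-1}\int e^{-ity}g(t)\,dt$, so the target $Whp_L(x/b_n-a_n)$ is a continuous bounded function of $x$. Fix a nonnegative even $k\in L^1$ with $\hat k$ smooth, compactly supported in $[-1,1]$, and $\hat k(0)=1$. For a large parameter $T$, I would compare $Z_n(x,x+h)$ with its smoothed version $(Z_n\ast k_{Tb_n}\ast\mathbf{1}_{[0,h]})(x)$, where $k_{Tb_n}(y)=Tb_n\,k(Tb_ny)$, and exploit the Fourier identity $\int f\,dZ_n=(P_n/2\pi)\int\hat f(t)\Psi_n(t)\,dt$ valid for band-limited $f$.

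After substituting $t=s/b_n$, the smoothed quantity $b_nP_n^{-1}(Z_n\ast k_{Tb_n}\ast\mathbf{1}_{[0,h]})(x)$ equals
$$\frac{1}{2\pi}\int_{-Tb_n}^{Tb_n}\hat k\!\left(\frac{s}{Tb_n}\right)\frac{1-e^{-ish/b_n}}{is/b_n}\,e^{-isx/b_n}\,\Psi_n(s/b_n)\,ds.$$
On the range $|s|\le S$ for fixed large $S$, I write $\Psi_n(s/b_n)=e^{isa_n}\bigl[e^{-isa_n}\Psi_n(s/b_n)\bigr]$; the bracket converges a.s.\ to $g(s)W$ uniformly on compacts in $s$ by Theorem \ref{LTT1.1.1}, and the prefactor $e^{isa_n}e^{-isx/b_n}=e^{-is(x/b_n-a_n)}$ is precisely the Fourier mode needed to reconstruct $hp_L$ at the point $x/b_n-a_n$. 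Dominated convergence, justified by $g\in L^1$, gives a.s.\ convergence of the low-frequency integral to $Whp_L(x/b_n-a_n)$ uniformly in $x$, up to an $o_S(1)$ remainder. On the high-frequency range $|s|\ge\iota b_n$, hypothesis (\ref{LTET1.1.21}) yields $|\prod_i\phi_i(s/b_n)|\le c_\iota^n$, and a second-moment computation for $\Psi_n$ exploiting the branching decomposition, together with the moment input (\ref{LTET1.1.22}) and the tail control (\ref{LTE1.1.41}), forces geometric decay of $\mathbb E|\Psi_n(s/b_n)|^2$ on this range. Markov combined with Borel--Cantelli over a polynomial grid in $x$ of spacing $\sim 1/b_n$ then makes the high-frequency contribution a.s.\ negligible, uniformly in $x$. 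The intermediate range $S<|s|<\iota b_n$ is handled analogously, with the integrand bounded by $\mathbb E|\Psi_n(s/b_n)|$ times $|s|^{-1}$, which is integrable after dividing by $b_n$. Finally, an Ess\'een-type bound, combined with the uniform continuity of $p_L$ on the scale $1/(Tb_n)$, compares $Z_n(x,x+h)$ with its $k_{Tb_n}$-smoothing; the resulting error vanishes as $T\to\infty$ taken after $n\to\infty$.

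The main obstacle is the passage from pointwise a.s.\ convergence, which is all (\ref{LTE1.1.6}) provides directly for each $t$, to uniform-in-$x$ a.s.\ control of the whole inversion integral. The polynomial growth $b_n\sim\theta n^\gamma$ with $0<\gamma\le 1/2$ and the strengthened moment assumption (\ref{LTET1.1.22}) with $\beta>\gamma$ are calibrated precisely so that, after Markov's inequality, the probabilities of the bad events on a polynomially fine grid of $x$-values are summable in $n$, enabling a Borel--Cantelli argument. Juggling the three frequency zones simultaneously while maintaining uniformity in $x$ is the delicate step; once this is in place, letting $T\to\infty$ after handling each zone yields the claim.
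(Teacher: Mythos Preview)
Your overall architecture---smooth with a band-limited kernel, Fourier-invert, split by frequency, then de-smooth \`a la Stone/Ess\'een---matches the paper's. The low-frequency zone and the final de-smoothing are fine. But your treatment of the intermediate and high frequencies has a genuine gap.

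On the intermediate range $S<|s|<\iota b_n$ you propose to bound the integrand by $\mathbb{E}|\Psi_n(s/b_n)|\cdot|s|^{-1}$. This cannot work: the trivial bound $|\Psi_n|\le Z_n(\mathbb{R})/P_n$ (so $\mathbb{E}|\Psi_n|\le1$) together with the $b_n/|s|$ coming from the indicator's Fourier transform yields an integral of order $b_n\log b_n\to\infty$. No amount of Borel--Cantelli over an $x$-grid repairs this, and in fact no $x$-grid is needed anyway since $|e^{-isx/b_n}|=1$ makes every Fourier bound uniform in $x$ automatically. The missing ingredient is decay of $\Psi_n$ itself on this range, and crude moment bounds on $\Psi_n$ do not supply it.

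The paper's remedy is to recycle the truncated branching decomposition (\ref{LTE1.2.2}) already built for Theorem~\ref{LTT1.1.1}: with $k=J(n)\sim n^{1/\alpha}$,
\[
\Psi_n=\Psi_k\prod_{i=k}^{n-1}\omega_i\zeta_i+\sum_{i=k}^{n-1}A_i\prod_{j>i}\omega_j\zeta_j+\sum_{i=k}^{n-1}B_i\prod_{j>i}\omega_j\zeta_j.
\]
In the leading term the randomness is confined to $|\Psi_k|\le Z_k(\mathbb{R})/P_k\to W$, while the remaining factor is the \emph{deterministic} product $\prod_{i=k}^{n-1}|\zeta_i(t)|$, which is close to $\prod|\phi_i(t)|$. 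Cram\'er's lemma (Lemma~\ref{LTL1.3.1}) turns the hypothesis $\sup_i\sup_{|t|\ge\iota}|\phi_i(t)|\le c_\iota<1$ into the sub-Gaussian bound $|\phi_i(t)|\le e^{-\gamma_1 t^2}$ for $|t|<\iota$, whence $b_n\int_{A/b_n}^{\iota}\prod_{i=k}^{n-1}|\phi_i(t)|\,dt\le\int_{|s|\ge A}e^{-\theta_1 s^2}\,ds$ since $(n-k)/b_n^2$ is bounded below when $b_n\sim\theta n^\gamma$ with $\gamma\le1/2$. This is exactly what controls your intermediate zone. The $A_i$ and $B_i$ error terms are then handled by the same $L^1$/$L^2$ estimates as in Section~\ref{LTS7}, now carrying the extra factor $b_n\sim n^{\alpha\gamma}$ inside; this is precisely where $\beta>\gamma$ in (\ref{LTET1.1.22}) is used, via Lemma~\ref{LTL1.2.1} with exponent $\alpha\gamma<\beta$.

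In short: replace your second-moment/Borel--Cantelli scheme on $|s|>S$ by the decomposition (\ref{LTE1.2.2}) and Cram\'er's lemma; the rest of your plan is sound and coincides with the paper's.
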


\section{Proof of Theorem \ref{LTT1.1.1}}\label{LTS7}
To prove Theorem \ref{LTT1.1.1}, we only need to show (\ref{LTE1.1.5}), for it is obvious that (\ref{LTE1.1.6}) is directly from (\ref{LTE1.1.5}),  and (\ref{LTE1.1.7}) is from (\ref{LTE1.1.6}) by applying the continuity theorem. The rest assertions are according to Biggins (1990, \cite{b},  Theorem 2) . We remark here that our proof is inspired by
Biggins (1990, \cite{b}) and Klebaner (1982, \cite{k}).

We will use a truncation method. Let $\kappa>0$ be a constant. Let
$\tilde{X}_{n,\kappa}$ be equal to $X_n$ on $\{N_n(\log
N_n)^{\kappa}\leq P_{n+1}\}$ and be empty otherwise; the rest of
the notations is extended similarly. Let
$I_n(x)= {1}_{\{x(\log x)^{\kappa}\leq P_{n+1}\}}$ and
$I_n^c=1-I_n$, so
$$\tilde{m}_{n,\kappa}= \mathbb{E}N_nI_n(N_n),$$
and$$\tilde{\phi}_{n,\kappa}(t)=\int
e^{itx}\tilde{\nu}_{n,\kappa}(dx)=\frac{1}{\tilde{m}_{n,\kappa}} \mathbb{E}\int
e^{itx}X_n(dx)I_n(N_n).$$

The proof of Theorem \ref{LTT1.1.1} is composed of several  lemmas.

\begin{lem}\label{LTL1.2.1}
Let $\beta\geq0$.
If $\sum_n\frac{1}{m_nn(\log n)^{1+\delta}}\mathbb{E}
N_n(\log^+N_n)^{1+\beta} (\log^+\log^+N_n)^{1+\delta}<\infty$ holds
for some $\delta>0$, then for all $\kappa$, $\sum_n
n^{\beta}(1-\tilde{m}_{n,\kappa}/m_n)<\infty$.
\end{lem}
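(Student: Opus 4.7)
The plan is to write out $1-\tilde m_{n,\kappa}/m_n$ as a tail expectation and then show that the event forcing the truncation is so rare that the hypothesis bound kicks in with plenty of room to spare.

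First I would note that
\begin{equation*}
1-\frac{\tilde m_{n,\kappa}}{m_n}=\frac{1}{m_n}\mathbb{E}\bigl[N_n\,\mathbf{1}_{\{N_n(\log N_n)^{\kappa}>P_{n+1}\}}\bigr].
\end{equation*}
By assumption (\ref{LTE1.1.1}) we have (\ref{LTE1.1.2}): there exist $c>1$ and $n_0$ with $P_{n+1}>c^{n+1}$ for $n>n_0$. Thus on the truncation event $\{N_n(\log N_n)^{\kappa}>P_{n+1}\}$ one has
\begin{equation*}
\log N_n+\kappa\log^{+}\!\log N_n>(n+1)\log c,
\end{equation*}
which, regardless of the sign of $\kappa$, forces $\log^{+}N_n\geq c_1 n$ for all sufficiently large $n$, with $c_1=c_1(c,\kappa)>0$. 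Consequently, on the same event and for $n$ large, $\log^{+}\!\log^{+}N_n\geq\log(c_1 n)\geq \tfrac{1}{2}\log n$.

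Combining these two lower bounds yields, on the truncation event and for $n$ large,
\begin{equation*}
n^{\beta}\cdot n\cdot(\log n)^{1+\delta}\;\leq\; C\,(\log^{+}N_n)^{1+\beta}(\log^{+}\!\log^{+}N_n)^{1+\delta},
\end{equation*}
with a constant $C=C(c_1,\beta,\delta)$. Multiplying by $N_n\mathbf{1}_{\{\cdot\}}/m_n$ and taking expectation gives
\begin{equation*}
n^{\beta}\Bigl(1-\frac{\tilde m_{n,\kappa}}{m_n}\Bigr)\;\leq\;\frac{C}{m_n\,n\,(\log n)^{1+\delta}}\,\mathbb{E}\bigl[N_n(\log^{+}N_n)^{1+\beta}(\log^{+}\!\log^{+}N_n)^{1+\delta}\bigr].
\end{equation*}
Summing over $n\geq n_0$ and invoking the hypothesis of the lemma delivers the conclusion; the finitely many initial terms are harmless because $1-\tilde m_{n,\kappa}/m_n\leq 1$.

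The only genuinely non-trivial step is justifying that $\log^{+}N_n\gtrsim n$ on the truncation event when $\kappa>0$, since then the inequality $N_n(\log N_n)^{\kappa}>P_{n+1}$ does not immediately isolate $\log N_n$; however a one-line bootstrap using the exponential growth of $P_{n+1}$ handles this. Everything else is direct monotone estimation, so I expect no further obstacles.
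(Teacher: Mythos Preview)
Your proof is correct and follows essentially the same route as the paper's: both rewrite $1-\tilde m_{n,\kappa}/m_n$ as the tail expectation $\frac{1}{m_n}\mathbb{E}[N_n I_n^c(N_n)]$, use the exponential lower bound $P_{n+1}>c^{n+1}$ from (\ref{LTE1.1.2}) to force $\log N_n\gtrsim n$ on the truncation event, and then compare with the assumed convergent series. The only cosmetic difference is that the paper packages the comparison via the monotone function $f(x)=(\log x)^{1+\beta}(\log\log x)^{1+\delta}$ and the inequality $I_n^c(N_n)\le f(N_n(\log N_n)^\kappa)/f(P_{n+1})$, whereas you carry out the same pointwise estimate explicitly.
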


\begin{proof}[Proof]
We can calculate
\begin{eqnarray*}
 \sum_nn^{\beta}(1-\frac{\tilde{m}_{n,\kappa}}{m_n})&=&\sum_n\frac{n^{\beta}}{m_n}(m_n-\tilde{m}_{n,\kappa})\\
&=&\sum_n\frac{n^{\beta}}{m_n} \mathbb{E}N_nI^c_n(N_n)\\
&=&\sum_n\frac{n^{\beta}}{m_n} \mathbb{E}N_nI^c_n(N_n) {1}_{\{N_n>a\}}+\sum_n\frac{n^{\beta}}{m_n} \mathbb{E}N_nI^c_n(N_n) {1}_{\{N_n\leq
a\}},
\end{eqnarray*}
where $a$ is a constant. Since $P_n\rightarrow\infty$, the convergence of the second
series above is obvious. It suffices to show that of the first series for suitable $a$. Take $f(x)=(\log x)^{1+\beta}(\log\log x)^{(1+\delta)}$. $f(x)$ is
increasing and positive on $(a,+\infty)$. Noticing (\ref{LTE1.1.2}), we have
for $n$ large enough,
\begin{eqnarray*}
&&\frac{n^{\beta}}{m_n} \mathbb{E}N_nI^c_n(N_n) {1}_{\{N_n>a\}}\\
&\leq&\frac{n^{\beta}}{m_n}\mathbb{E}\frac{f(N_n(\log N_n)^{\kappa})}{f(
P_{n+1})} {1}_{\{N_n>a\}}\\
&\leq&\frac{C}{m_nn(\log n)^{1+\delta}} \mathbb{E}N_n(\log
N_n)^{1+\beta}(\log\log
N_n)^{1+\delta} {1}_{\{N_n>a\}}\\
&\leq&\frac{C}{m_nn(\log n)^{1+\delta}} \mathbb{E}N_n(\log^+
N_n)^{1+\beta}(\log^+\log^+ N_n)^{1+\delta},
\end{eqnarray*}
where $C$ is a constant, and like $a$, in general, it does not
necessarily stand for the same constant throughout.
The convergence of the series $\sum_n\frac{1}{m_nn(\log
n)^{1+\delta}} \mathbb{E}N_n(\log^+N_n)^{1+\beta}
(\log^+\log^+N_n)^{1+\delta}$ implies that of the series
$\sum_n\frac{n^{\beta}}{m_n} \mathbb{E}N_nI^c_n(N_n) {1}_{\{N_n>a\}}$.
\end{proof}

\begin{lem}[\cite{b}, Lemma 3 (ii)]\label{LTL1.2.2}
If $\sum_n(1-\tilde{m}_{n,\kappa}/m_n)<\infty$, then
\begin{equation}\label{LTE1.2.1}
\prod_{i=0}^{n-1}\tilde{\phi}_{i,\kappa}(t/b_n)-\prod_{i=0}^{n-1}
\phi_i(t/b_n)\rightarrow0,\qquad as\; n\rightarrow\infty.
\end{equation}
\end{lem}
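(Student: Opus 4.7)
The plan is to reduce the claim to a telescoping/sum-of-differences estimate combined with the summability hypothesis. I will use two standard facts: first, that both $\phi_i$ and $\tilde\phi_{i,\kappa}$ are characteristic functions of honest probability measures (note $\int d\tilde\nu_{i,\kappa} = \tilde m_{i,\kappa}^{-1}\mathbb{E}N_i I_i(N_i) = 1$), so each one has modulus at most $1$; second, the elementary inequality
\[
\Bigl|\prod_{i=0}^{n-1} a_i - \prod_{i=0}^{n-1} b_i\Bigr| \leq \sum_{i=0}^{n-1} |a_i - b_i|
\qquad\text{whenever } |a_i|,|b_i|\leq 1,
\]
applied with $a_i = \tilde\phi_{i,\kappa}(t/b_n)$ and $b_i = \phi_i(t/b_n)$.

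Next I would establish a uniform (in $s\in\mathbb{R}$) bound
\[
|\tilde\phi_{i,\kappa}(s) - \phi_i(s)| \leq C\,\bigl(1 - \tilde m_{i,\kappa}/m_i\bigr),
\]
valid for all $i$ with $\tilde m_{i,\kappa}/m_i \geq 1/2$ (which holds for all $i$ large since the summability hypothesis forces $\tilde m_{i,\kappa}/m_i \to 1$). The bound is obtained by writing
\[
\tilde\phi_{i,\kappa}(s) - \phi_i(s)
= \frac{1}{\tilde m_{i,\kappa}}\,\mathbb{E}\!\!\int e^{isx}X_i(dx)\bigl(I_i(N_i)-1\bigr)
+ \Bigl(\tfrac{1}{\tilde m_{i,\kappa}} - \tfrac{1}{m_i}\Bigr)\mathbb{E}\!\!\int e^{isx}X_i(dx),
\]
and then estimating each term by $|e^{isx}|\leq 1$, using $\mathbb{E}N_i(1-I_i(N_i)) = m_i - \tilde m_{i,\kappa}$.

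With these two ingredients in hand, the proof is completed by splitting the index sum at an arbitrary cutoff $N$. For fixed $\varepsilon>0$, choose $N$ so large that $C\sum_{i\geq N}(1-\tilde m_{i,\kappa}/m_i) < \varepsilon/2$; this controls the tail contribution uniformly in $n$. For the head contribution $\sum_{i=0}^{N-1}|\tilde\phi_{i,\kappa}(t/b_n) - \phi_i(t/b_n)|$, observe that $\phi_i(0) = \tilde\phi_{i,\kappa}(0) = 1$, that both functions are continuous at $0$, and that $b_n\to\infty$ implies $t/b_n\to 0$; hence each of the $N$ (finitely many) terms tends to $0$ as $n\to\infty$, so the head is less than $\varepsilon/2$ for $n$ large. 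Adding the two estimates finishes the proof.

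The only mildly delicate step is the uniform bound on $|\tilde\phi_{i,\kappa}(s) - \phi_i(s)|$; the rest is bookkeeping. I do not anticipate any serious obstacle, since the argument is essentially the proof of Lemma 3(ii) in Biggins (1990) adapted to the varying-environment normalization used here.
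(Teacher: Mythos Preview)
Your proposal is correct and follows exactly the route the paper has in mind: the lemma is not proved in the paper but cited from Biggins (1990, Lemma~3(ii)), and later (in the proof of Lemma~\ref{LTL1.3.2}) the paper explicitly invokes the same key estimate $|\zeta_i-\phi_i|\leq 2(1-\tilde m_{i,\kappa}/m_i)$ that you derive. One small simplification: your decomposition can be replaced by $m_i\phi_i(s)=\tilde m_{i,\kappa}\tilde\phi_{i,\kappa}(s)+\mathbb{E}\!\int e^{isx}X_i(dx)I_i^c(N_i)$, from which $|\phi_i-\tfrac{\tilde m_{i,\kappa}}{m_i}\tilde\phi_{i,\kappa}|\leq 1-\tfrac{\tilde m_{i,\kappa}}{m_i}$ and hence $|\tilde\phi_{i,\kappa}-\phi_i|\leq 2(1-\tilde m_{i,\kappa}/m_i)$ for \emph{all} $i$, with no need for the proviso $\tilde m_{i,\kappa}/m_i\geq 1/2$; your head/tail split is then still needed (and is the essential point), since the uniform bound alone only gives a finite limit rather than zero.
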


The formula (\ref{LTE1.2.1}) shows that we can prove  (\ref{LTE1.1.5}) with $\tilde{\phi}_{i,\kappa}$
in place of $\phi_i$. For simplicity, let
$$\zeta_n(t)=\tilde{\phi}_{n,\kappa}(t)\qquad\text{and}\qquad \omega_n=\frac{\tilde{m}_{n,\kappa}}{m_n},$$
where the value of $\kappa$ will be fixed to be suitably large later.

Let $\Psi_{u}^{(1)}(t):=m_{n}^{-1}\int e^{itx}X(u)(dx)$ if $u\in\mathbb{T}_n$. Then
\begin{eqnarray*}
&&\Psi_{n+1}(t)-\omega_n\zeta_n(t)\Psi_n(t)\\
&=&\frac{1}{P_n}\sum_{u\in\mathbb{T}_n}e^{itS_u}\Psi_u^{(1)}(t)I^c_n(N(u))
+\frac{1}{P_n}\sum_{u\in\mathbb{T}_n}e^{itS_u}\left(\Psi_u^{(1)}(t)I_n(N(u))-\omega_n\zeta_n(t)\right)\\
&=&:A_n(t)+B_n(t).
\end{eqnarray*}
By iteration, we obtain
\begin{equation}\label{LTE1.2.2}
\Psi_n\left(\frac{t}{b_n}\right)-\Psi_k\left(\frac{t}{b_n}\right)\prod_{i=k}^{n-1}\omega_i\zeta_i\left(\frac{t}{b_n}\right)
=\sum_{i=k}^{n-1}\left(A_i\left(\frac{t}{b_n}\right)+B_i\left(\frac{t}{b_n}\right)\right)\prod_{j=i+1}^{n-1}\omega_j\zeta_j\left(\frac{t}{b_n}\right).
\end{equation}
Thus
\begin{eqnarray}\label{LTE1.2.3}
&&\Psi_n({t}/{b_n})-W\prod_{i=0}^{n-1}\zeta_i(t/b_n)\nonumber\\
&=&\sum_{i=k}^{n-1}A_i(t/b_n)\prod_{j=i+1}^{n-1}\omega_j\zeta_j(t/b_n)
+\sum_{i=k}^{n-1}B_i(t/b_n)\prod_{j=i+1}^{n-1}\omega_j\zeta_j(t/b_n)\nonumber\\
&&+\left(\Psi_k(t/b_n)\prod_{i=k}^{n-1}\omega_i\zeta_i(t/b_n)-W\prod_{i=0}^{n-1}\zeta_i(t/b_n)\right).
\end{eqnarray}

Let $\alpha>1$. Take $k=J(n)=j$ if $j^{\alpha}\leq
n<(j+1)^{\alpha}$, so that $k^{\alpha}\sim n$, which means $k$ goes
to infinity more slowly than $n$. For this $k$, we will show that
each term in the right side of (\ref{LTE1.2.3}) is negligible.

\begin{lem}\label{LTL1.2.3}
If $\sum_n(1-\tilde{m}_{n,\kappa}/m_n)<\infty$, then
\begin{equation}\label{LTE1.2.4}
\sum_{i=k}^{n-1}A_i(t/b_n)\prod_{j=i+1}^{n-1}\omega_j\zeta_j(t/b_n)\rightarrow0\qquad
a.s.,\qquad as\;n\rightarrow\infty.
\end{equation}
\end{lem}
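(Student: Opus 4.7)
The plan is to dominate $|A_i(t/b_n)|$ by a positive random quantity that is independent of both $t$ and $n$ and summable in $i$ almost surely, and then to use $|\omega_j\zeta_j|\leq 1$ to discard the product and reduce everything to a tail estimate.

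First I would introduce the ``truncated-away offspring mass'' at generation $i{+}1$, namely
$$D_i \;:=\; \sum_{u\in\mathbb{T}_i} N(u)\,I_i^c(N(u)),$$
the total number of children born to generation-$i$ mothers thrown out by the truncation. Since $|\Psi_u^{(1)}(t)|\leq N(u)/m_i$ for every $t$, the triangle inequality gives, uniformly in $t\in\mathbb{R}$,
$$|A_i(t)| \;\leq\; \frac{1}{P_i m_i}\sum_{u\in\mathbb{T}_i} N(u)\,I_i^c(N(u)) \;=\; \frac{D_i}{P_{i+1}}.$$
Since $\omega_j\leq 1$ and $|\zeta_j(t/b_n)|\leq \zeta_j(0)=1$, this in turn yields
$$\Big|\sum_{i=k}^{n-1}A_i(t/b_n)\prod_{j=i+1}^{n-1}\omega_j\zeta_j(t/b_n)\Big| \;\leq\; \sum_{i=k}^{\infty}\frac{D_i}{P_{i+1}},$$
and the right-hand side depends neither on $t$ nor on $n$.

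Next I would show that $\sum_{i\geq 0} D_i/P_{i+1}<\infty$ almost surely. Conditioning on $\mathcal{F}_i$ and using that $\{X(u):u\in\mathbb{T}_i\}$ are i.i.d.\ with common distribution $\eta_i$, one gets $\mathbb{E}[D_i\mid\mathcal{F}_i] = Z_i(\mathbb{R})\,\mathbb{E}[N_iI_i^c(N_i)] = Z_i(\mathbb{R})\,m_i(1-\omega_i)$, so $\mathbb{E}[D_i/P_{i+1}] = 1-\omega_i$. By Tonelli and the standing hypothesis $\sum_i(1-\omega_i)<\infty$,
$$\mathbb{E}\sum_{i=0}^{\infty}\frac{D_i}{P_{i+1}} \;=\; \sum_{i=0}^{\infty}(1-\omega_i) \;<\; \infty,$$
so the series is a.s.\ finite. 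Since $k=J(n)\to\infty$ as $n\to\infty$, its tail from $k$ onwards tends to $0$ a.s., which gives (\ref{LTE1.2.4}); because the dominating bound is $t$-free, the exceptional null set can be taken independent of $t$.

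I do not expect any real obstacle: the whole point is the observation that, thanks to the indicator $I_i^c$, the random sum $A_i(t)$ is controlled \emph{uniformly in~$t$} by the truncated-away offspring mass $D_i/P_{i+1}$, whose annealed first moment is exactly $1-\omega_i$. The mild subtlety is that the argument $t/b_n$ varies with $n$, but the uniform-in-$t$ nature of the dominating bound defuses this entirely.
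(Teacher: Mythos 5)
Your proof is correct and follows essentially the same route as the paper: both arguments bound $|A_i(t)|$ uniformly in $t$ by $P_{i+1}^{-1}\sum_{u\in\mathbb{T}_i}N(u)I_i^c(N(u))$, discard the product $\prod_j\omega_j\zeta_j$ via $|\omega_j\zeta_j|\leq 1$, verify that the annealed expectation of the resulting non-negative series equals $\sum_i(1-\omega_i)<\infty$, and conclude by finiteness of the series together with $k=J(n)\to\infty$. The only difference is cosmetic: you name the truncated-away mass $D_i$ and spell out the conditional-expectation step, which the paper leaves implicit.
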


\begin{proof}[Proof]
Notice that
\begin{equation}\label{LTE1.2.5}
\left|\sum_{i=k}^{n-1}A_i\prod_{j=i+1}^{n-1}\omega_j\zeta_j\right|\leq\sum_{i=k}^{n-1}|A_i|\leq\sum_{i=k}^{n-1}\frac{1}{P_im_i}\sum_{|u|=i}N(u)I^c_i(N(u)).
\end{equation}
Since
\begin{eqnarray*}
 \mathbb{E}\left(\sum_{i=0}^{\infty}\frac{1}{P_im_i}\sum_{|u|=i}N(u)I^c_i(N(u))\right)
=\sum_{i=0}^{\infty}\frac{1}{m_i}\mathbb{E} N_iI^c_i(N_{i})
=\sum_i(1-\frac{\tilde{m}_{i,\kappa}}{m_i})<\infty.
\end{eqnarray*}
we get
$$\sum_{i=0}^{\infty}\frac{1}{P_im_i}\sum_{|u|=i}N(u)I^c_i(N(u))<\infty.$$
which implies (\ref{LTE1.2.4}), combined with (\ref{LTE1.2.5}).
\end{proof}

\begin{lem}\label{LTL1.2.4}
If for some $\delta_1>0$ ,
\begin{equation}\label{LTE1.2.6}
\sum_n\frac{1}{m_nn^{1+\delta_1}} \mathbb{E}N_n\log^+N_n<\infty,
\end{equation}
then
\begin{equation}\label{LTE1.2.7}
\sum_{i=k}^{n-1}B_i(t/b_n)\prod_{j=i+1}^{n-1}\omega_j\zeta_j(t/b_n)\rightarrow0\qquad
a.s.,\qquad as\;n\rightarrow\infty.
\end{equation}
\end{lem}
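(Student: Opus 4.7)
The plan is to exploit the conditional martingale-difference structure of the $B_i(\cdot)$'s and conclude via a second-moment estimate combined with Borel--Cantelli. By the definitions of $\omega_i$ and $\zeta_i$, for each $u \in \mathbb{T}_i$,
\[
\mathbb{E}[\Psi_u^{(1)}(s) I_i(N(u)) \mid \mathcal{F}_i] = m_i^{-1}\tilde{m}_{i,\kappa}\tilde{\phi}_{i,\kappa}(s) = \omega_i \zeta_i(s),
\]
while conditionally on $\mathcal{F}_i$ the family $\{X(u):u\in\mathbb{T}_i\}$ is i.i.d.\ of law $\eta_i$, and the weights $\prod_{j=i+1}^{n-1}\omega_j\zeta_j(t/b_n)$ are deterministic with modulus $\leq 1$. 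Writing $S_n(t)$ for the sum in (\ref{LTE1.2.7}), the cross terms in $\mathbb{E}|S_n(t)|^2$ therefore vanish, giving
\[
\mathbb{E}|S_n(t)|^2 \leq \sum_{i=k}^{n-1}\mathbb{E}|B_i(t/b_n)|^2 \leq \sum_{i=k}^{n-1} P_i^{-1} m_i^{-2} \mathbb{E}[N_i^2 I_i(N_i)],
\]
after using $|\Psi_u^{(1)}(s)| \leq N(u)/m_i$ and conditional independence across $u \in \mathbb{T}_i$.

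Next I would bound $\mathbb{E}[N_i^2 I_i(N_i)]$ with the truncation. On $\{I_i(N_i) = 1\}$, $N_i (\log N_i)^\kappa \leq P_{i+1}$, so $N_i I_i(N_i) \leq P_{i+1}(\log N_i)^{-\kappa}$ for $N_i \geq 2$. Splitting at the cutoff $M = P_{i+1}/(\log P_{i+1})^\kappa$,
\[
\mathbb{E}[N_i^2 I_i(N_i)] \leq M m_i + P_{i+1}m_i(\log M)^{-\kappa} \leq C P_{i+1}m_i(\log P_{i+1})^{-\kappa}.
\]
Since $P_{i+1}/(P_i m_i) = 1$, this yields $\mathbb{E}|B_i(t/b_n)|^2 \leq C(\log P_{i+1})^{-\kappa}$, and invoking the exponential growth $P_i > c^i$ from (\ref{LTE1.1.2}), we obtain $\mathbb{E}|B_i(t/b_n)|^2 \leq C' i^{-\kappa}$.

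Finally, with $k = k(n) = \lfloor n^{1/\alpha}\rfloor$, Chebyshev and summation give
\[
\sum_n \mathbb{P}(|S_n(t)| > \varepsilon) \leq \varepsilon^{-2}\sum_n\sum_{i=k(n)}^{n-1} C' i^{-\kappa} \leq C''\sum_n k(n)^{1-\kappa} \leq C''\sum_n n^{(1-\kappa)/\alpha},
\]
which is finite as soon as $\kappa > \alpha + 1$. Fixing $\kappa$ that large at the outset of the truncation, Borel--Cantelli delivers $S_n(t)\to 0$ a.s.\ for each fixed $t$. The main obstacle is the parameter juggling: $\kappa$ must be taken large enough to win in the second-moment bound above while remaining compatible with the truncation-approximation result of Lemma~\ref{LTL1.2.2} and the preceding convergence $\sum (1-\tilde m_{n,\kappa}/m_n) < \infty$; hypothesis (\ref{LTE1.2.6}) is precisely what guarantees the latter estimates hold at the required $\kappa$, after which the Chebyshev/Borel--Cantelli step is routine.
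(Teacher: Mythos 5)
Your proof is correct. The opening steps---the conditional martingale-difference structure, the vanishing of cross terms, and the bound $\mathbb{E}|B_i(t/b_n)|^2\le P_i^{-1}m_i^{-2}\,\mathbb{E}[N_i^2 I_i(N_i)]$---are exactly the paper's. Where you genuinely diverge is in estimating $\mathbb{E}[N_i^2 I_i(N_i)]$: you split at $M=P_{i+1}/(\log P_{i+1})^{\kappa}$ and use the truncation inequality $N_iI_i(N_i)\le P_{i+1}(\log N_i)^{-\kappa}$ to get $\mathbb{E}[N_i^2 I_i(N_i)]\le C\,P_{i+1}m_i(\log P_{i+1})^{-\kappa}$, hence $\mathbb{E}|B_i|^2\le C(\log P_{i+1})^{-\kappa}=O(i^{-\kappa})$ by $P_i>c^i$, after which Chebyshev plus Borel--Cantelli with $\kappa>\alpha+1$ closes the argument. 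The paper instead rearranges the double sum $\sum_n\sum_{i=k(n)}^{n-1}$ into a single sum $\sum_i O(i^{\alpha})(\cdots)$ via the level sets of $J$, and then compares $\mathbb{E}[N_i^2 I_i(N_i)]$ with the increasing function $f(x)=x(\log x)^{-(\alpha+1+\delta_1)}$ to reduce it to $\mathbb{E}N_i\log^+N_i$, at which point hypothesis (\ref{LTE1.2.6}) furnishes summability (requiring $\kappa\ge\alpha+\delta_1$). Your route is cleaner and in fact shows that (\ref{LTE1.2.6}) is superfluous for this particular lemma: the truncation together with the exponential growth of $P_n$ from (\ref{LTE1.1.1}) already force polynomial decay of $\mathbb{E}|B_i|^2$ at any desired rate, with only $\mathbb{E}N_i=m_i$ being used. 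One small inaccuracy in your closing remark: (\ref{LTE1.2.6}) is not what guarantees $\sum_n(1-\tilde m_{n,\kappa}/m_n)<\infty$; that is the content of Lemma~\ref{LTL1.2.1}, whose hypothesis carries an extra iterated-logarithm factor, and its conclusion holds for every $\kappa$, so choosing $\kappa>\alpha+1$ at the outset costs nothing.
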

\noindent\emph{Remark}. Obviously (\ref{LTE1.1.3}) implies (\ref{LTE1.2.6}).

\begin{proof}[Proof]
Let
$$C_n=\sum_{i=k}^{n-1}B_i(t/b_n)\prod_{j=i+1}^{n-1}\omega_j\zeta_j(t/b_n).$$
We want to show that $\sum_{n=1}^{\infty}\mathbb{E}|C_n|^2<\infty$, which
implies (\ref{LTE1.2.6}). Since $E(B_i|\mathcal {F}_i)=0$, we have
$$\mathbb{E}|C_n|^2=var(C_n)=var\left(\sum_{i=k}^{n-1}B_i\prod_{j=i+1}^{n-1}\omega_j\zeta_j\right)\leq\sum_{i=k}^{n-1}var(B_i),$$
where the notation $var$ denotes variance.
Moreover,
\begin{eqnarray*}
var(B_i)&=&\mathbb{E}(var(B_i|\mathcal {F}_i))
\leq\frac{1}{P_i}var(\Psi_i^{(1)}I_i(N_i))
\leq\frac{1}{P_im_i^2}\mathbb{E}N_i^2I_i(N_i),
\end{eqnarray*}
where $\Psi_{n}^{(1)}(t):=m_{n}^{-1}\int e^{itx}X_{n}(dx)$.
We denote $J^{-1}$ be the inverse mapping of $J$,
$J^{-1}(j)=\{n:J(n)=j\}$ and $|J^{-1}(j)|$ be the number of the
elements in $J^{-1}(j)$. It is not difficult to see that
$|J^{-1}(j)|=O(j^{\alpha-1})$ and
$\sum_{j=1}^i|J^{-1}(j)|=O(i^\alpha)$. Hence,
\begin{eqnarray*}
\sum_{n=1}^\infty
\mathbb{E}|C_n|^2&\leq&\sum_{n=1}^\infty\sum_{i=k}^{n-1}\frac{1}{P_im_i^2}\mathbb{E}N_i^2I_i(N_i)\\
&=&\sum_{j=1}^\infty\sum_{n\in
J^{-1}(j)}\sum_{i=j}^{n-1}\frac{1}{P_im_i^2}\mathbb{E}N_i^2I_i(N_i)\\
&\leq&\sum_{j=1}^\infty|J^{-1}(j)|\sum_{i=j}^{\infty}\frac{1}{P_im_i^2}\mathbb{E}N_i^2I_i(N_i)\\
&=&\sum_{i=1}^\infty\sum_{j=1}^{i}|J^{-1}(j)|\frac{1}{P_im_i^2}\mathbb{E}N_i^2I_i(N_i)\\
&\leq&C\sum_{i=1}^\infty\frac{i^{\alpha}}{P_im_i^2}\mathbb{E}N_i^2I_i(N_i)\\
&=&C\sum_{i=1}^\infty\frac{i^{\alpha}}{P_im_i^2}\mathbb{E}N_i^2I_i(N_i) {1}_{\{N_i>a\}}
+C\sum_{i=1}^\infty\frac{i^{\alpha}}{P_im_i^2}\mathbb{E}N_i^2I_i(N_i) {1}_{\{N_i\leq
a\}}.
\end{eqnarray*}
The second series above converges, since
$\sum_i\frac{i^\alpha}{P_im_i^2}<\infty$. For the first series
above, take $f(x)=x(\log x)^{-(\alpha+1+\delta_1)}$. $f(x)$ is
increasing and positive on $(a,+\infty)$. We have for $i$ large
enough,
\begin{eqnarray*}
&&\frac{i^{\alpha}}{P_im_i^2}\mathbb{E}N_i^2I_i(N_i) {1}_{\{N_i>a\}}\\
&\leq&\frac{i^{\alpha}}{P_im_i^2}\mathbb{E}N_i^2\frac{f(P_{i+1})}{f(\{N_i(\log
N_i)^{\kappa})} {1}_{\{N_i>a\}}\\
&\leq&\frac{C}{m_ii^{1+\delta_1}}\mathbb{E}N_i(\log
N_i)^{\alpha+1+\delta_1-\kappa} {1}_{\{N_i>a\}}\\
&\leq&\frac{C}{m_ii^{1+\delta_1}}\mathbb{E}N_i\log^+ N_i,
\end{eqnarray*}
if we take $\kappa\geq\alpha+\delta_1$. Then by (\ref{LTE1.2.6}), it follows the
convergence of the series
$\sum_i\frac{i^{\alpha}}{P_im_i^2}\mathbb{E}N_i^2I_i(N_i) {1}_{\{N_i>a\}}$.
\end{proof}

\begin{lem}\label{LTL1.2.5}
If $\sum_n(1-\tilde{m}_{n,\kappa}/m_n)<\infty$ and (\ref{LTE1.1.41}), (\ref{LTE1.1.4}) hold,
then
\begin{equation}\label{LTE1.2.8}
\Psi_k(t/b_n)\prod_{i=k}^{n-1}\omega_i\zeta_i(t/b_n)-W\prod_{i=0}^{n-1}\zeta_i(t/b_n)\rightarrow0\qquad
a.s.,\qquad as\;n\rightarrow\infty.
\end{equation}
\end{lem}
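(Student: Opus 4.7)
The plan is to decompose the left-hand side as a sum of four easily-controlled pieces by adding and subtracting well-chosen quantities. Since each characteristic function $\zeta_i(s)$ has modulus at most $1$, we can factor out $\prod_{i=k}^{n-1}\zeta_i(t/b_n)$ and use the telescoping identity
\begin{align*}
\Psi_k(t/b_n)\prod_{i=k}^{n-1}\omega_i\zeta_i(t/b_n) &- W\prod_{i=0}^{n-1}\zeta_i(t/b_n) = \prod_{i=k}^{n-1}\zeta_i(t/b_n)\,\Bigl\{\Psi_k(t/b_n)\Bigl(\prod_{i=k}^{n-1}\omega_i-1\Bigr)\\
&+ \bigl(\Psi_k(t/b_n)-W_k\bigr) + (W_k-W) + W\Bigl(1-\prod_{i=0}^{k-1}\zeta_i(t/b_n)\Bigr)\Bigr\}.
\end{align*}
Since the prefactor has modulus at most $1$, it is enough to show each of the four bracketed terms tends to $0$ a.s. as $n\to\infty$, keeping in mind that $k=J(n)$ satisfies $k^{\alpha}\sim n$ with $\alpha>1$ still to be chosen at the end.

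Two of the four terms are essentially immediate. The third term vanishes a.s. by the martingale convergence $W_k\to W$. The first term is handled by the uniform bound $|\Psi_k(t/b_n)|\leq W_k\to W<\infty$ combined with $\prod_{i=k}^{n-1}\omega_i\to 1$, which follows from the standing hypothesis $\sum_i(1-\omega_i)<\infty$ since $|\log\omega_i|\leq 2(1-\omega_i)$ eventually.

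The fourth term is deterministic and is controlled by condition (\ref{LTE1.1.41}). Using $|1-e^{iy}|\leq 2^{1-\varepsilon}|y|^\varepsilon$ with the $\varepsilon$ from (\ref{LTE1.1.41}) and the pointwise estimate $\tilde{\nu}_{i,\kappa}\leq\omega_i^{-1}\nu_i$ (with $\omega_i\to 1$), I would write
$$\Bigl|1-\prod_{i=0}^{k-1}\zeta_i(t/b_n)\Bigr|\leq\sum_{i=0}^{k-1}|1-\zeta_i(t/b_n)|\leq C|t|^\varepsilon b_n^{-\varepsilon}\upsilon_k(\varepsilon)=O\bigl(n^{-\varepsilon\gamma_2+\gamma_1/\alpha}\bigr),$$
where the last equality invokes (\ref{LTE1.1.41}), (\ref{LTE1.1.4}) and $k\sim n^{1/\alpha}$. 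Choosing $\alpha>\gamma_1/(\varepsilon\gamma_2)$ forces this to $0$.

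The main obstacle is the second term. The same analytic inequality applied pointwise in $u\in\mathbb{T}_k$, together with the subadditivity of $x\mapsto|x|^\varepsilon$ under convolution, gives
$$\mathbb{E}|\Psi_k(t/b_n)-W_k|\leq 2^{1-\varepsilon}|t|^\varepsilon b_n^{-\varepsilon}\,\mathbb{E}\Bigl(P_k^{-1}\sum_{u\in\mathbb{T}_k}|S_u|^\varepsilon\Bigr)\leq C|t|^\varepsilon n^{-\varepsilon\gamma_2+\gamma_1/\alpha}.$$
Promoting this $L^1$ bound to almost-sure convergence is the delicate step: I would enlarge $\alpha$ still further so that $n^{-\varepsilon\gamma_2+\gamma_1/\alpha}$ becomes summable (or, if the available $\varepsilon\gamma_2$ is too small, pass to a sparse subsequence $n_j$ for which Markov plus Borel--Cantelli yields a.s. convergence, and fill the gaps using the deterministic bound $|\Psi_k(\cdot)|\leq W_k$ and the fact that $k=J(n)$ is constant on long blocks). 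Taking a common $\alpha$ that satisfies all constraints simultaneously then finishes the proof of (\ref{LTE1.2.8}).
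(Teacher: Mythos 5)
Your decomposition is correct and is essentially the paper's; after verifying the algebraic identity, terms one, three and four are handled the same way the paper handles them, and your observation that $\tilde\nu_{i,\kappa}\leq\omega_i^{-1}\nu_i$ together with $\sum_i(1-\omega_i)<\infty$ cleanly controls the fourth term.

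The gap is in the second term, which you yourself flag as the delicate step. Your first option (enlarge $\alpha$ until $n^{-\varepsilon\gamma_2+\gamma_1/\alpha}$ is summable over $n$) cannot work unless $\varepsilon\gamma_2>1$: the exponent $-\varepsilon\gamma_2+\gamma_1/\alpha$ is bounded below by $-\varepsilon\gamma_2$ no matter how large $\alpha$ is, and the lemma does not assume $\varepsilon\gamma_2>1$. Your second option ("pass to a sparse subsequence $n_j$ and fill the gaps using $|\Psi_k(\cdot)|\leq W_k$") does not close either, because $|\Psi_k(\cdot)|\leq W_k$ only gives $|\Psi_k(t/b_n)-W_k|\leq 2W_k$, a bounded quantity with no decay, so convergence along $n_j$ says nothing about the $n$ strictly between $n_j$ and $n_{j+1}$.

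The fact you should be exploiting is that the pointwise estimate
$$
\bigl|\Psi_k(t/b_n)-W_k\bigr|\;\leq\; C\,|t|^{\varepsilon}\,b_n^{-\varepsilon}\cdot\frac{1}{P_k}\int|x|^{\varepsilon}Z_k(dx)
$$
factors cleanly into a deterministic part $b_n^{-\varepsilon}$, which depends on $n$, and a random part $Y_k:=P_k^{-1}\int|x|^{\varepsilon}Z_k(dx)$, which depends only on the block index $k=J(n)$ and satisfies $\mathbb{E}Y_k=\upsilon_k(\varepsilon)$. Hence
$$
\sup_{n\in J^{-1}(k)}\bigl|\Psi_k(t/b_n)-W_k\bigr|\;\leq\;C\,|t|^{\varepsilon}\Bigl(\sup_{n\in J^{-1}(k)}b_n^{-\varepsilon}\Bigr)Y_k,
$$
whose expectation is $o\bigl(k^{\gamma_1-\alpha\varepsilon\gamma_2}\bigr)$. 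Because the sum is over $k$ (one term per block), taking $\alpha>(1+\gamma_1)/(\varepsilon\gamma_2)$ makes it summable for \emph{any} $\varepsilon\gamma_2>0$, and Markov plus Borel--Cantelli then gives the a.s.\ convergence of the block-wise supremum, hence of $\Psi_{J(n)}(t/b_n)-W_{J(n)}$ as $n\to\infty$. This is the mechanism the paper is invoking when it says "take $\alpha$ large," and it is exactly what your appeal to "$k=J(n)$ constant on long blocks" should have led to.
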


\begin{proof}[Proof]
$\sum_n(1-\tilde{m}_{n,\kappa}/m_n)<\infty$ implies
that $\sum_{i=k}^{n-1}\omega_i\rightarrow1$, so the factor
$\prod_{i=k}^{n-1}\omega_i$ in (2.8) can be ignored. Notice that
$$\Psi_k\prod_{i=k}^{n-1}\zeta_i-W\prod_{i=0}^{n-1}\zeta_i
=\left(\Psi_k-\frac{Z_k(\mathbb{R})}{P_k}\right)\prod_{i=k}^{n}\zeta_i +
\left(\frac{Z_k(\mathbb{R})}{P_k}-W\right)\prod_{i=k}^{n-1}\zeta_i
+W\left(\prod_{i=k}^{n-1}\zeta_i-\prod_{i=0}^{n-1}\zeta_i\right).
$$
It suffices to
prove that
\begin{equation}\label{LTE1.2.9}
\Psi_k(t/b_n)-\frac{Z_k(\mathbb{R})}{P_k}\rightarrow0\qquad
a.s.,\qquad as\;k\rightarrow\infty.
\end{equation}
and
\begin{equation}\label{LTE1.2.10}
\prod_{i=k}^{n-1}\zeta_i(t/b_n)-\prod_{i=0}^{n-1}\zeta_i(t/b_n)\rightarrow0,
\qquad as\;k\rightarrow\infty.
\end{equation}
Since $|e^{itx}-1|\leq C|tx|^\varepsilon$, we have
\begin{eqnarray*}
\left|\Psi_k\left({t}/{b_n}\right)-\frac{Z_n(\mathbb{R})}{P_k}\right|&\leq&\frac{1}{P_k}\int\left|e^{tb^{-1}_nx}-1\right|Z_k(dx)\\
&\leq&C|u|^\varepsilon
b_n^{-\varepsilon}\frac{1}{P_k}\int|x|^\varepsilon Z_k(dx).
\end{eqnarray*}
Assume that $0<\varepsilon\leq1$ (the proof for the case of
$\varepsilon>1$ is similar). Taking expectation in the above inequality, we obtain
\begin{eqnarray*}
\mathbb{E}\left|\Psi_k(t/b_n)-\frac{Z_k(\mathbb{R})}{ P_k}\right|&\leq&C|t|^\varepsilon
\sup\{b_n^{-\varepsilon}:k^\alpha\leq n<(k+1)^\alpha\}\int|x|^\varepsilon\nu_0\ast\cdots\ast\nu_{k-1}(dx)\\
&\leq&C|t|^\varepsilon
\sup\{b_n^{-\varepsilon}:k^\alpha\leq n<(k+1)^\alpha\}\sum^{k-1}_{i=0}\int|x|^\varepsilon\nu_i(dx)\\
&=&C|t|^\varepsilon \sup\{b_n^{-\varepsilon}:k^\alpha\leq
n<(k+1)^\alpha\}\upsilon_k(\varepsilon) =|u|^\varepsilon
o(k^{\gamma_1-\alpha\varepsilon\gamma_2}).
\end{eqnarray*}
Hence (\ref{LTE1.2.9}) holds if we take $\alpha$ large.
By Lemma \ref{LTL1.2.2}, we can prove (\ref{LTE1.2.10}) with $\phi_i$ in place of $\zeta_i$,
which holds directly by noticing that
\begin{eqnarray*}
\left|\prod_{i=k}^{n-1}\phi_i (t/b_n)-\prod_{i=0}^{n-1}\phi_i (t/b_n)\right|
&=&\left|\prod_{i=k}^{n-1}\phi_i (t/b_n)\left(1-\prod_{i=0}^{k-1}\phi_i (t/b_n)\right)\right|
\leq\left|1-\prod_{i=0}^{k-1}\phi_i(t/b_n)\right|\\
&=&\left|\mathbb{E}\left(\Psi_k(t/b_n)-\frac{Z_k(\mathbb{R})}{ P_k}\right)\right| \leq
\mathbb{E}\left|\Psi_k(t/b_n)-\frac{Z_k(\mathbb{R})}{ P_k}\right|.
\end{eqnarray*}
Thus (\ref{LTE1.2.8}) holds.
\end{proof}

\section{Proof of Theorem \ref{LTT1.2}}\label{LTS8}
 We will go along the proof by following the lines in \cite{b}. Let
$$K(x)=\frac{1}{2\pi}\left(\frac{\sin\frac{1}{2}x}{\frac{1}{2}x}\right)^2\quad K_a(x)=\frac{1}{a}K(\frac{x}{a})\;(a>0).$$
Then $$\int_{\mathbb{R}}K(x)dx=1 \qquad\text{and}\qquad
\int_{\mathbb{R}}K_a(x)dx=1.$$ The characteristic function of $K_a$
is denoted by $k_a$, which vanishes outside
$(-\frac{1}{a},\frac{1}{a})$, so that the characteristic function of
$\frac{Z_n}{P_n}*K_a$ is integrable and so
$\frac{Z_n}{P_n}*K_a$ has a density function $D_a^{(n)}$. We
will get our result through the asymptotic property of $D_a^{(n)}$.

\begin{lem}[see \cite{c}]\label{LTL1.3.1}
If $f(t)$ is a characteristic function such that $|f(t)|\leq\kappa$ as soon as $b\leq|u|\leq2b$, then we have for $|u|<b$,
$$|f(t)|\leq1-(1-\kappa^2)\frac{t^2}{8b^2}.$$
\end{lem}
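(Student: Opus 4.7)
The plan is to deduce the inequality from the classical doubling inequality
\[
1-|f(2t)|^{2}\;\leq\; 4\bigl(1-|f(t)|^{2}\bigr),
\]
which I would first record as a preliminary observation. It is proved by writing $|f(t)|^{2}=\mathbb{E}\cos(tY)$ with $Y=X-X'$ a symmetrized version of the underlying random variable, so that $1-|f(2t)|^{2}=\mathbb{E}[2\sin^{2}(tY)]$ and $1-|f(t)|^{2}=\mathbb{E}[2\sin^{2}(tY/2)]$; the bound then follows from the identity $\sin^{2}(tY)=4\sin^{2}(tY/2)\cos^{2}(tY/2)\leq 4\sin^{2}(tY/2)$.

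Next I would iterate. For any positive integer $n$, the doubling inequality applied $n$ times gives
\[
1-|f(t)|^{2}\;\geq\; 4^{-n}\bigl(1-|f(2^{n}t)|^{2}\bigr).
\]
Given $t$ with $0<|t|<b$, I would choose $n$ to be the least positive integer such that $2^{n}|t|\geq b$. By minimality $2^{n-1}|t|<b$, so $b\leq 2^{n}|t|<2b$, and the hypothesis yields $|f(2^{n}t)|\leq\kappa$. Moreover, $2^{n}<2b/|t|$, whence $4^{-n}>t^{2}/(4b^{2})$. Substituting, this gives
\[
1-|f(t)|^{2}\;\geq\;\frac{(1-\kappa^{2})\,t^{2}}{4b^{2}}.
\]

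Finally, using $\sqrt{1-x}\leq 1-x/2$ for $x\in[0,1]$, I would conclude
\[
|f(t)|\;\leq\;\sqrt{1-\frac{(1-\kappa^{2})t^{2}}{4b^{2}}}\;\leq\;1-\frac{(1-\kappa^{2})t^{2}}{8b^{2}},
\]
which is the claimed bound. The constant $1/8$ (rather than $1/2$) arises from the two factors of $2$ lost along the way: one from the strict inequality $2^{n-1}|t|<b$ (giving $4^{-n}>t^{2}/(4b^{2})$ instead of $t^{2}/b^{2}$) and one from the concave-square-root step. There is no real obstacle here; the only subtlety is choosing $n$ carefully so that $2^{n}t$ lands in the strip $[b,2b]$ where the hypothesis applies, and tracking the resulting numerical constants.
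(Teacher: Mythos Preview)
Your argument is correct and is in fact the classical proof of this inequality due to Cram\'er. The paper itself does not give a proof but simply cites \cite{c}; your proposal reproduces exactly the standard doubling-inequality argument found there, so there is nothing to compare.
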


\begin{lem}\label{LTL1.3.2}
Under the conditions of Theorem \ref{LTT1.2},
\begin{equation}\label{den}
\sup_{x\in\mathbb{R}}|b_nD_a^{(n)}(b_n(x+a_n))-Wp_L(x)|\rightarrow0\;a.s.,\qquad as\;\;
n\rightarrow\infty.
\end{equation}
\end{lem}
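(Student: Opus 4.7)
The plan is to use Fourier inversion. Since $k_a$ is supported in $[-1/a,1/a]$ and $|\Psi_n|\le W_n<\infty$ a.s., the product $\Psi_n(\cdot)k_a(\cdot)$ is integrable, so
$$b_nD_a^{(n)}(b_n(x+a_n))=\frac{1}{2\pi}\int e^{-is(x+a_n)}\Psi_n(s/b_n)k_a(s/b_n)\,ds$$
after the substitution $s=b_nt$, and similarly $Wp_L(x)=(W/2\pi)\int e^{-isx}g(s)\,ds$ since $g$ is integrable by hypothesis. Taking absolute values inside the integral and the supremum over $x$ gives
$$\sup_{x\in\mathbb{R}}\bigl|b_nD_a^{(n)}(b_n(x+a_n))-Wp_L(x)\bigr|\le\frac{1}{2\pi}\int\Delta_n(s)\,ds,$$
with $\Delta_n(s)=|e^{-isa_n}\Psi_n(s/b_n)k_a(s/b_n)-Wg(s)|$. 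It therefore suffices to show $\int\Delta_n\,ds\to 0$ a.s.

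I would insert the pivot $We^{-isa_n}\prod_{i<n}\phi_i(s/b_n)k_a(s/b_n)$ to get $\Delta_n\le T_1+T_2+T_3$ with $T_1=|k_a(s/b_n)||\Psi_n(s/b_n)-W\prod_{i<n}\phi_i(s/b_n)|$, $T_2=W|k_a(s/b_n)||e^{-isa_n}\prod_{i<n}\phi_i(s/b_n)-g(s)|$ and $T_3=W|g(s)||k_a(s/b_n)-1|$. The tail term $T_3$ is harmless by dominated convergence: $k_a(s/b_n)\to k_a(0)=1$ pointwise, $|k_a-1|\le 2$, and $g$ is integrable. For $T_2$ I would split the integration range at a fixed $A>0$ and at $\iota b_n$. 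On $|s|\le A$, condition (A) gives uniform convergence of the characteristic functions to the continuous limit $g$, and $k_a(s/b_n)\to 1$, so bounded convergence closes this part. On $A<|s|<\iota b_n$, Lemma \ref{LTL1.3.1} applied with $b=\iota b_n$ and $\kappa=c_\iota$ yields $|\phi_i(s/b_n)|\le 1-(1-c_\iota^2)s^2/(8\iota^2b_n^2)$ uniformly in $i$, hence $|\prod_{i<n}\phi_i(s/b_n)|\le\exp(-c_0ns^2/b_n^2)\le\exp(-c_0s^2/\theta^2)$ for large $n$, using $b_n\sim\theta n^\gamma$ with $\gamma\le 1/2$ so that $n/b_n^2$ stays bounded below; together with $\int_{|s|>A}|g(s)|\,ds\to 0$ as $A\to\infty$, this kills the intermediate range. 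Finally on $\iota b_n<|s|<b_n/a$, hypothesis (\ref{LTET1.1.21}) gives $|\prod_{i<n}\phi_i(s/b_n)|\le c_\iota^n$, and multiplying by the length $O(b_n)$ still yields $O(b_nc_\iota^n)\to 0$.

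The hardest term is $T_1$, and this is precisely where the strengthened moment hypothesis (\ref{LTET1.1.22}) with $\beta>\gamma$ enters (as opposed to $\beta=0$ in Theorem \ref{LTT1.1.1}). On $|s|\le A$ the uniform-on-compacts conclusion of Theorem \ref{LTT1.1.1} combined with bounded convergence is enough. The main obstacle is the range $A<|s|<b_n/a$, whose length grows like $b_n$, while (\ref{LTE1.1.5}) only provides pointwise a.s.\ control of $\Psi_n(s/b_n)-W\prod_{i<n}\phi_i(s/b_n)$. My plan is to lift the decomposition (\ref{LTE1.2.3}) to an integral statement: the extra factor $n^\beta$ with $\beta>\gamma$ afforded by Lemma \ref{LTL1.2.1} is exactly what is needed to offset the growth of the integration range. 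Concretely, the Borel--Cantelli argument behind Lemma \ref{LTL1.2.3} survives integration against $|k_a(s/b_n)|$ because the dominant summands are $s$-independent, the $L^2$ computation behind Lemma \ref{LTL1.2.4} extends to $\int\mathbb{E}|C_n(s)|^2\,ds$ by Fubini with the gain $n^\beta$ absorbing the extra $b_n\sim n^\gamma$, and the remainder piece treated in Lemma \ref{LTL1.2.5} is handled by the same $b_n^{-\varepsilon}$ estimate together with (\ref{LTE1.1.41}). On the outer range $\iota b_n<|s|<b_n/a$ one additionally invokes the deterministic bound $|\Psi_n(s/b_n)|\le W_n\to W$ together with the geometric decay $c_\iota^n$, exactly as for $T_2$. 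Assembling the three contributions yields $\int\Delta_n\,ds\to 0$ a.s., which is the required conclusion.
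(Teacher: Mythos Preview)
Your architecture---Fourier inversion, a three-way split $\Delta_n\le T_1+T_2+T_3$, and lifting the decomposition (\ref{LTE1.2.3}) to an integral statement---is exactly the paper's route, and your handling of $T_2$, $T_3$, and the $A_i$- and $B_i$-contributions to $T_1$ is correct and matches the paper. The gap is in your treatment of the remaining piece of (\ref{LTE1.2.3}), namely $\Psi_k\prod_{i=k}^{n-1}\omega_i\zeta_i - W\prod_{i=0}^{n-1}\zeta_i$, and in your final ``outer range'' sentence.

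The $b_n^{-\varepsilon}$ estimate from Lemma \ref{LTL1.2.5} gives only $|\Psi_k(s/b_n)-Z_k(\mathbb{R})/P_k|\le C|s/b_n|^\varepsilon P_k^{-1}\int|x|^\varepsilon Z_k(dx)$; since $|s/b_n|\le 1/a$ on $U$, this is merely bounded, with expectation growing like $\upsilon_k(\varepsilon)$, so integrating over a range of length $O(b_n)$ yields a divergent bound. Your attempted rescue on $\iota b_n<|s|<b_n/a$---``$|\Psi_n(s/b_n)|\le W_n$ together with the geometric decay $c_\iota^{\,n}$''---does not work either: the geometric decay pertains to $\prod\phi_i$, not to $\Psi_n$ itself, so $T_1$ is only bounded by $W_n+Wc_\iota^{\,n}$, and the integral over the outer range is of order $W_n b_n\to\infty$.

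What actually closes the argument (and is what the paper does via (\ref{LTE1.3.3})--(\ref{LTE1.3.5})) is to use the \emph{crude} bound $|\Psi_k(s/b_n)|\le Z_k(\mathbb{R})/P_k$---a quantity that converges a.s.\ to $W$---and then control $\int_U\bigl|\prod_{i=k}^{n-1}\zeta_i(s/b_n)\bigr|\,ds$ directly. The product $\prod_{i=k}^{n-1}\zeta_i$ still has $n-k\sim n$ factors, so it inherits the Gaussian decay from Lemma \ref{LTL1.3.1} on the intermediate range and the geometric decay $(c_\iota')^{\,n-k}$ on the outer range; this gives $\limsup_n\int_U|\prod_{i=k}^{n-1}\zeta_i(s/b_n)|\,ds\le\int_{|s|\ge A}e^{-\theta_1 s^2}\,ds$, small as $A\to\infty$. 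The essential point you missed is that the decay on the tail must come from the product $\prod_{i\ge k}\zeta_i$ multiplying $\Psi_k$, not from any estimate on $\Psi_n$ or $\Psi_k$ alone. (The paper in fact uses the shorter decomposition (\ref{LTE1.2.2}) rather than (\ref{LTE1.2.3}), which avoids the redundant pivot through $W\prod\phi_i$ and lands directly on the term $\Psi_k\prod_{i=k}^{n-1}\omega_i\zeta_i$.)
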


\begin{proof}[Proof]
Let $A$ be a positive constant. By the Fourier
inversion theorem,
$$
2\pi\left|b_nD_a^{(n)}(b_n(x+a_n))-Wp_L(x)\right|
=\left|\int\left(\Psi_n(\frac{t}{b_n})k_a(\frac{t}{b_n})e^{-ita_n}-W
g(t)\right)dt\right|.
$$
Split the integral of the right side into $|t|<A$ and $|t|\geq A$.
Using Theorem \ref{LTT1.1.1} and noticing that $\lim_nk_a(t/b_n)=1$, we have
\begin{eqnarray*}
&&\left|\int_{|t|<A}\left(\Psi_n(\frac{t}{b_n})k_a(\frac{t}{b_n})e^{-iua_n}-W g(t)\right)dt\right|\\
&=& \int_{|t|<A}\left|\left(\Psi_n(\frac{t}{b_n})e^{-ita_n}-W
g(t)\right)k_a(\frac{t}{b_n})\right|dt
+ \int_{|t|<A}\left|(1-k_a(\frac{t}{b_n}))W g(t)\right|dt\\
&\leq&2A\sup_{|t|\leq A}\left|\Psi_n(\frac{t}{b_n})e^{-ita_n}-W
g(t)\right|+W\int_{|t|<A}\left|1-k_a(\frac{t}{b_n})\right|dt \rightarrow0\; a.s.,\quad
as \;n\rightarrow\infty.
\end{eqnarray*}
For $A$ large, the integral of $g(t)$ over $|t|\geq A$ is small. So
to show (\ref{den}), it remains to consider
$$\left|\int_{|t|\geq A} \Psi_n(\frac{t}{b_n})k_a(\frac{t}{b_n})dt\right|=\left|\int_Ub_n\Psi_n(t)k_a(t)dt\right|,$$
where $U=\{t:\frac{A}{b_n}\leq|t|\leq\frac{1}{a}\}$. By the
decomposition (\ref{LTE1.2.2}),
$$b_n \Psi_nk_a=b_n \Psi_k\prod_{i=k}^{n-1}\omega_i\zeta_ik_a+
b_n\sum^{n-1}_{i=k}A_i\prod_{j=i+1}^{n-1}\omega_j\zeta_jk_a+b_n\sum^{n-1}_{i=k}B_i\prod_{j=i+1}^{n-1}\omega_j\zeta_jk_a.$$
Take $k=J(n)$ the same as the proof of Theorem \ref{LTT1.1.1},we need to show
that
\begin{equation}\label{LTE1.3.2}
\left|b_n\sum_{i=k}^{n-1}\int_UA_i\prod_{j=i+1}^{n-1}\omega_j\zeta_jk_adt\right|\rightarrow0\quad a.s.,\qquad
as\;n\rightarrow\infty.
\end{equation}
and the similar result with $B_i$ in place of $A_i$.

Firstly, for $n$ large enough,
\begin{eqnarray*}
&&\left|b_n\sum_{i=k}^{n-1}\int_UA_i\prod_{j=i+1}^{n-1}\omega_j\zeta_jk_adt\right|\leq b_n\sum_{i=k}^{n-1}\int_U|A_i|dt\\
&\leq&C k^{\alpha\gamma}\sum_{i=k}^{n-1} \frac{1}{P_im_i}\sum_{|u|=i}N(u)I_i^c(N(u))\\
&\leq&C\sum_{i=k}^{n-1}\frac{i^{\alpha\gamma}}{P_im_i}\sum_{|u|=i}N(u)I_i^c(N(u)).
\end{eqnarray*}
Like the proof of Lemma \ref{LTL1.2.3},  we obtain
$$\mathbb{E}\left(\sum_{i=0}^{\infty} \frac{i^{\alpha\gamma}}{P_im_i}\sum_{|u|=i}N(u)I_i^c(N(u))\right)
=\sum_{i=0}^{\infty}i^{\alpha\gamma}\left(1-\frac{\tilde{m}_{i,k}}{m_i}\right)<\infty$$
from Lemma  \ref{LTL1.2.1}, if we take $\alpha$ sufficiently near $1$ such that
$\alpha\gamma<\beta$. Hence (\ref{LTE1.3.2}) is proved.

Secondly, to prove (\ref{LTE1.3.2}) with $B_i$ in place of $A_i$, like the
proof of Lemma \ref{LTL1.2.4}, we set
$$C_n=b_n\sum_{i=k}^{n-1}\int_UB_i\prod_{j=i+1}^{n-1}\omega_j\zeta_jk_adt.$$
Since $\mathbb{E}(B_i|\mathcal {F}_i)=0$,  for $n$ large enough,
\begin{eqnarray*}
\mathbb{E}|C_n|^2&=&var\left(b_n\sum_{i=k}^{n-1}\int_UB_i\prod_{j=i+1}^{n-1}\omega_j\zeta_jk_adt\right)\\
&=&b_n^2\sum_{i=k}^{n-1}var\left(\int_UB_i\prod_{j=i+1}^{n-1}\omega_j\zeta_jk_adt\right)\\
&=&b_n^2\sum_{i=k}^{n-1}\mathbb{E} \left|\int_UB_i\prod_{j=i+1}^{n-1}\omega_j\zeta_jk_adt\right |^2\\
&\leq&b_n^2\sum_{i=k}^{n-1}\mathbb{E}\left(\int_Udt\right)\left(\int_U|B_i\prod_{j=i+1}^{n-1}\omega_j\zeta_jk_a|^2dt\right)\\
&\leq&\frac{2}{a}b_n^2\sum_{i=k}^{n-1}\int_U\mathbb{E}|B_i|^2dt\\
&=&\frac{2}{a}b_n^2\sum_{i=k}^{n-1}\int_Uvar|B_i|^2dt\\
&\leq&C\sum_{i=k}^{n-1}\frac{i^{2\alpha\gamma}}{P_im_i^2}\mathbb{E} N_i^2I_i(N_{i}).
\end{eqnarray*}
Following the last part of the proof of Lemma \ref{LTL1.2.4}, we obtain that
$\sum_{n=1}^{\infty}\mathbb{E}|C_n|^2<\infty$ provided $\kappa$ large enough,
which implies that $C_n\rightarrow0$ a.s..

Finally, we  consider
$b_n\int_U\Psi_k\prod_{i=k}^{n-1}\omega_i\zeta_ik_adt$. Clearly,
\begin{equation}\label{LTE1.3.3}
\left|b_n\int_U\Psi_k\prod_{i=k}^{n-1}\omega_i\zeta_ik_adt\right|\leq\frac{Z_k(\mathbb{R})}{P_k}b_n\int_U\left|\prod_{i=k}^{n-1}\zeta_i\right|dt
\end{equation}
Since $\frac{Z_k(\mathbb{R})}{P_k}\rightarrow W$ a.s. as
$k\rightarrow\infty$, it remains to consider
$b_n\int_U|\prod_{i=k}^{n-1}\zeta_i|dt$. It suffices to show that
\begin{equation}\label{LTE1.3.4}
\limsup_{n\rightarrow\infty}
b_n\int_U\left|\prod_{i=k}^{n-1}\zeta_i(t)\right|dt\leq\limsup_{n\rightarrow\infty}
b_n\int_U\prod_{i=k}^{n-1}|\phi_i(t)|dt,
\end{equation}
and there exists a constant $\theta_1>0$ (not depending on $A$) such
that
\begin{equation}\label{LTE1.3.5}
\limsup_{n\rightarrow\infty}
b_n\int_U\prod_{i=k}^{n-1}|\phi_i(t)|dt\leq\int_{|t|\geq
A}e^{-\theta_1t^2}du\qquad \text{for any $A$}.
\end{equation}
Notice that
$$ b_n\int_U\left|\prod_{i=k}^{n-1}\zeta_i\right|dt\leq b_n\int_U\left|\prod_{i=k}^{n-1}\zeta_i-\prod_{i=k}^{n-1}\phi_i\right|dt+ b_n\int_U\prod_{i=k}^{n-1}|\phi_i|dt. $$
The proof of \cite{b} Lemma 3 gives
$|\zeta_i-\phi_i|\leq2(1-\tilde{m}_{i,k}/m_i)$, so we have
\begin{eqnarray*}
b_n\int_U\left|\prod_{i=k}^{n-1}\zeta_i-\prod_{i=k}^{n-1}\phi_i\right|dt&\leq &b_n\int_U\sum_{i=k}^{n-1}|\zeta_i- \phi_i|dt\\
&\leq&\frac{4}{a}b_n\sum_{i=k}^{n-1}\left(1-\frac{\tilde{m}_{i,k}}{m_i}\right)\\
&\leq
&C\sum_{i=k}^{n-1}(i+1)^{\alpha\gamma}\left(1-\frac{\tilde{m}_{i,k}}{m_i}\right)\rightarrow
0\quad as\;n\rightarrow\infty,
\end{eqnarray*}
provided $\alpha\gamma<\beta$.  Hence (\ref{LTE1.3.4}) holds. Now we turn to
prove (\ref{LTE1.3.5}). Split the set $U$ into two parts: $U_1=\{t:A/b_n\leq
t\leq\epsilon\}$ and $U_2=\{t:\epsilon\leq t\leq \frac{1}{a}\}$.
Since for some $\iota>0$, $|\phi_i(t)|\leq c_\iota<1$ for all
$|t|\geq\iota$, by Lemma \ref{LTL1.3.1}, we have  for all $|t|<\iota$,
$$|\phi_i(t)|\leq1-\frac{1-c_\iota^2}{8\iota^2}t^2\leq e^{-\gamma_1t^2},$$
where $\gamma_1=\frac{1-c_\iota^2}{8\iota^2}$. Thus
$$\sup_i\sup_{|t|\geq\epsilon} |\phi_i(t)| =\max\{e^{-\gamma_1\epsilon^2},c_\iota\}=:c'_\iota<1.$$
It follows that
\begin{equation}\label{LTE1.3.6}
b_n\int_{U_2}\prod_{i=k}^{n-1}|\phi_i(t)|dt\leq\frac{2}{a}b_n(c'_\iota)^{n-k-1}\rightarrow0\;as\;n\rightarrow\infty,
\end{equation}
and
\begin{equation}\label{LTE1.3.7}
b_n\int_{U_1}\prod_{i=k}^{n-1}|\phi_i(t)|dt\leq\int_{|t|\geq
A}\exp(-b_n^{-2}(n-k-1)\gamma_1t^2)dt.
\end{equation}
It is easy to see that
\begin{displaymath}
\lim_{n\rightarrow\infty}\frac{n-k-1}{b_n^2}=\left\{
\begin{array}{ll}\frac{1}{\theta^2}&\textrm{ if $\gamma=\frac{1}{2}$,}\\
\infty&\textrm{ if $0<\gamma<\frac{1}{2}$}.
\end{array}\right.
\end{displaymath}
So there exists a constant $\theta_1>0$ such that
$b_n^{-2}(n-k-1)\gamma_1>\theta_1$ for $n$ large enough. Thus
\begin{equation}\label{LTE1.3.8}
\limsup_{n\rightarrow\infty}
b_n\int_{U_1}\prod_{i=k}^{n-1}|\phi_i(t)|dt\leq\int_{|t|\geq
A}e^{-\theta_1t^2}dt\qquad \text{for any $A$.}
\end{equation}
Consequently, (\ref{LTE1.3.5}) holds via (\ref{LTE1.3.6}) and (\ref{LTE1.3.8}). This completes the proof.
\end{proof}

By a similar argument of Stone (1965, \cite{s}), we have the following Lemma.

\begin{lem}\label{LTL1.3.3}
If (\ref{den}) holds, then $\forall\varepsilon>0$, there exist
$n_0>0$ and $\delta>0$ such that $\forall
n\geq n_0$ and $\forall 0<h<\delta$,
\begin{equation}\label{cz}
h(Wp_L(x)-\varepsilon)\leq P_n^{-1}Z_n(b_n(x+a_n),b_n(x+a_n+h))\leq
h(Wp_L(x)+\varepsilon)\quad a.s., \quad\forall x\in\mathbb{R}.
\end{equation}
The null set can be taken to be independent of $x$.
\end{lem}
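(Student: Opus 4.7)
The plan is to combine the uniform density convergence from Lemma \ref{LTL1.3.2} with a Stone-type deconvolution argument. First I would note that, since $g$ is integrable, Fourier inversion yields $p_L(x)=(2\pi)^{-1}\int e^{-itx}g(t)\,dt$, which is bounded, continuous, and tends to $0$ at infinity by Riemann-Lebesgue, hence uniformly continuous on $\mathbb{R}$. This regularity of $p_L$, together with (\ref{den}) and the martingale convergence $Z_n(\mathbb{R})/P_n\to W$ a.s., are the three ingredients.

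Write $y_1=b_n(x+a_n)$ and $y_2=b_n(x+a_n+h)$, fix $\eta>0$, and set $\alpha:=\int_{|w|>\eta}K_a(w)\,dw$, which is small when $\eta/a$ is large. Using that $K_a$ is even and that $(y_1-\eta-z,y_2+\eta-z)\supset(-\eta,\eta)$ whenever $z\in(y_1,y_2)$, two standard smoothed indicators bracket $\mathbf 1_{(y_1,y_2)}$; integrating them against $\frac{Z_n}{P_n}(dz)$ and using the identity $D_a^{(n)}(w)=\int K_a(w-z)\,\frac{Z_n}{P_n}(dz)$ yields
\begin{equation*}
\int_{y_1+\eta}^{y_2-\eta}D_a^{(n)}(w)\,dw\;-\;\alpha\,\frac{Z_n(\mathbb{R})}{P_n}
\;\le\; P_n^{-1}Z_n\bigl((y_1,y_2)\bigr)\;\le\;
(1-2\alpha)^{-1}\int_{y_1-\eta}^{y_2+\eta}D_a^{(n)}(w)\,dw,
\end{equation*}
valid as soon as $b_nh>2\eta$ (automatic for $n$ large since $b_n\to\infty$).

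Next, the change of variables $w=b_n(u+a_n)$ rewrites each bracket integral as $\int b_n D_a^{(n)}(b_n(u+a_n))\,du$ over an interval of length $h\pm 2\eta/b_n$ around $x$. By (\ref{den}) the integrand differs from $Wp_L(u)$ uniformly in $u$ by at most $\varepsilon_n(a):=\sup_u|b_nD_a^{(n)}(b_n(u+a_n))-Wp_L(u)|\to 0$ a.s. Combining this with uniform continuity of $p_L$ and $Z_n(\mathbb{R})/P_n\to W$, both sides of the sandwich reduce to $hWp_L(x)$ modulo errors controlled by $\alpha W$, $h\,\varepsilon_n(a)$, $(\eta/b_n)\|p_L\|_\infty$, and $h\,W\,\omega_{p_L}(h+2\eta/b_n)$, where $\omega_{p_L}$ is the (global) modulus of continuity of $p_L$. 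To conclude, given $\varepsilon>0$ I would first choose $\eta/a$ so large that $\alpha W$ and $(1-2\alpha)^{-1}-1$ are controlled by $\varepsilon$; then $\delta>0$ so small that $W\omega_{p_L}(2\delta)<\varepsilon/4$; and finally $n_0$ so large that for all $n\ge n_0$ the quantities $\varepsilon_n(a)$, $|Z_n(\mathbb{R})/P_n-W|$ and $\eta/b_n$ are negligible relative to $h\varepsilon$ for every $h\in(0,\delta)$. Collecting the errors yields (\ref{cz}) with the null set being the union of the ones supplied by (\ref{den}) and the martingale limit, hence independent of $x$ and $h$.

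The main obstacle is keeping the Stone sandwich error uniform in both $x$ and $h$: the additive boundary term of size $O(\eta/b_n)$ must be made small compared to $h\varepsilon$ uniformly in $h\in(0,\delta)$, which forces us to arrange the parameters in the order $\eta$ (depending on $\varepsilon,a$), then $\delta$ (depending on the modulus of continuity of $p_L$), then $n_0$ (absorbing $\varepsilon_n$, the martingale error and $\eta/(b_n\delta)$). The uniformity in $x$ is handled by the decay of $p_L$ at infinity (so the modulus of continuity is global) together with the fact that the supremum in (\ref{den}) is taken over all $x$.
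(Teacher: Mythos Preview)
Your approach via the Stone deconvolution sandwich is exactly what the paper intends (it gives no proof beyond citing Stone (1965)), so the overall strategy is right. The gap is precisely at the point you flag as the ``main obstacle'', and your proposed resolution does not close it.

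Two of the error terms you list, $\alpha\,Z_n(\mathbb{R})/P_n\approx\alpha W$ from the lower sandwich and $(\eta/b_n)W\|p_L\|_\infty$ from the interval enlargement, are \emph{additive} rather than proportional to $h$. To obtain (\ref{cz}) you must make each of them smaller than a fixed fraction of $h\varepsilon$ for \emph{every} $h\in(0,\delta)$, after $\eta,a,\alpha,\delta,n_0$ have all been fixed. Letting $h\downarrow0$ then forces $\alpha=0$ and $\eta/b_{n_0}=0$, which is impossible. Your remedy of choosing $n_0$ so that $\eta/(b_n\delta)$ is small only handles $h$ near $\delta$, not $h\ll\delta$. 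This is not merely bookkeeping: $Z_n$ is a finite sum of Dirac masses, so for any fixed $n$ there exist $x$ and arbitrarily small $h$ for which the interval $(b_n(x+a_n),b_n(x+a_n+h))$ contains either no atom (violating the lower bound wherever $Wp_L(x)>\varepsilon$) or a single atom of fixed mass (violating the upper bound). Thus no choice of deterministic $n_0,\delta$ can make the sandwich deliver (\ref{cz}) simultaneously for all $h\in(0,\delta)$.

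What makes the application in Theorem~\ref{LTT1.2} work is that there one substitutes $h'=h/b_n$ with $h>0$ fixed, so $h'b_n=h$ stays bounded away from $0$. Under that constraint the Stone sandwich can be run with $\eta$ a small fraction of $h$ and $a$ small enough that $\alpha_\eta=\int_{|v|>\eta}K_a\,dv$ is small (recall (\ref{den}) is available for every $a>0$); the crude lower-bound tail $\alpha\,\mu_n(\mathbb R)$ must then be refined by feeding the already-established upper bound back in to control $\mu_n$ on $\eta$-neighbourhoods of the endpoints. You should either reinterpret the lemma with $n_0$ allowed to depend on $h$ (which is all the proof of Theorem~\ref{LTT1.2} actually uses), or merge the Stone argument directly into that proof with $|I|=h$ fixed in the unscaled variable.
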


Now we turn to the proof of Theorem \ref{LTT1.2}:

\begin{proof}[Proof of Theorem \ref{LTT1.2}]
Fix $h>0$. $\forall\varepsilon>0$, take
	$0<\varepsilon'<\varepsilon/h$. By Lemmas  \ref{LTL1.3.2} and \ref{LTL1.3.3} , for this
$\varepsilon'>0$, there exist $n_0'>0$ and $\delta'>0$ such that
$\forall
n\geq n'_0$ and $\forall 0<h'<\delta'$,
\begin{equation*}
h'(Wp_L(x)-\varepsilon')\leq P_n^{-1}Z_n(b_n(x+a_n),b_n(x+a_n+h'))\leq
h'(Wp_L(x)+\varepsilon')\quad a.s., \quad\forall x\in\mathbb{R},
\end{equation*}
Let $h'=h/b_n$. Then there exist $\tilde{n}_0>0$ such that
$0<h'<\delta'$ for $n\geq\tilde{n}_0$. Take
$n_0:=\max\{n_0',\tilde{n}_0\}>0$, we have $\forall n\geq n_0$,
\begin{equation*}
h(Wp_L(x)-\varepsilon')\leq
b_nP_n^{-1}Z^n(b_n(x+a_n),b_n(x+a_n)+h)\leq
h(Wp_L(x)+\varepsilon')\quad a.s.,\; \forall x\in\mathbb{R},
\end{equation*}
which implies that
$$\sup_{x\in \mathbb{R}}|b_nP_n^{-1}Z_n(b_n(x+a_n),b_n(x+a_n)+h)-Whp_L(x)
|\leq\varepsilon'h<\varepsilon\qquad a.s.,$$
so that
$$\sup_{x\in \mathbb{R}}|b_nP_n^{-1}Z_n(x,x+h)-Whp_L(x/b_n-a_n)|<\varepsilon\qquad a.s..$$
The proof is finished.
\end{proof}

\section{Central limit theorems for $ {\frac{\mathbb{E}_\xi Z_n(\cdot)}{\mathbb{E}_\xi Z_n(\mathbb{R})}}$, $ {\frac{\mathbb{E} Z_n(\cdot)}{\mathbb{E} Z_n(\mathbb{R})}}$ and $ {\mathbb{E}\frac{Z_n(\cdot)}{\mathbb{E}_\xi Z_n(\mathbb{R})}}$}\label{LTS9}
Now we return to consider the branching random walk with a random environment in time introduced in Section \ref{LTS1}.  When the environment $\xi$ is fixed, a branching random walk in  random environment is in fact a branching random walk in varying environment  introduced in Section \ref{LTS6}. We still    assume (\ref{LTE1.2}), which implies that
\begin{equation*}
\lim_{n\rightarrow\infty}\frac{1}{n}\log P_n= \mathbb{E}\log m_0>0\qquad\text{and}\qquad\lim_{n\rightarrow\infty}\frac{1}{n}\log
m_n=0\qquad a.s.
\end{equation*}
by the ergodic
theorem. Hence the assumption (\ref{LTE1.1.1}) is satisfied, so that (\ref{LTE1.1.2}) holds for some constant $c>1$ and  integer
$n_0=n_0(\xi)$ depending on $c$ and $\xi$.  Note that all the notations and results in Section \ref{LTS6} are still available under the quenched law $\mathbb{P}_\xi $ and the corresponding expectation $\mathbb{E}_\xi$.

Recall that $\nu_n(\cdot)=\frac{\mathbb{E}_\xi X_n(\cdot)}{m_n}$ is the intensity measure of $\frac{X_n}{m_n}$. Put
\begin{equation}
\mu_n=\int x\nu_n(dx)=\frac{1}{m_n}\mathbb E_\xi \sum_{i=1}^{N(u)}L_i(u)  \quad (u\in \mathbb T_n)
\end{equation}
and
\begin{equation}
\sigma_n^2=\int(x-\mu_n)^2\nu_n(dx)=\frac{1}{m_n}\mathbb E_\xi \sum_{i=1}^{N(u)}(L_i(u)-\mu_n)^2 \quad (u\in \mathbb T_n).
\end{equation}
We first have a central limit theorem for quenched means as follows.

\begin{thm}[Central limit theorem for quenched means $ \frac{\mathbb{E}_\xi Z_n(\cdot)}{\mathbb{E}_\xi Z_n(\mathbb{R})}$]\label{LTT1.1.3}  If $|\mu_0|<\infty\; a.s.$ and $ \mathbb{E}\sigma_0^2\in(0,\infty)$, then
$$\frac{ \mathbb{E}_\xi Z_n(-\infty,b_nx+a_n]}{ \mathbb{E}_\xi Z_n(\mathbb{R})}\rightarrow \Phi(x)\quad a.s.,$$
where $a_n=\sum_{i=0}^{n-1}\mu_i$ and $b_n=(\sum_{i=0}^{n-1}\sigma_i^2)^{1/2}$.
\end{thm}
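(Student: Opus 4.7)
The plan is to identify the normalized quenched mean measure $\frac{\mathbb{E}_\xi Z_n(\cdot)}{\mathbb{E}_\xi Z_n(\mathbb{R})}$ as the distribution of a sum of independent (non-identically distributed) random variables under $\mathbb{P}_\xi$, and then apply the classical Lindeberg central limit theorem, with the Lindeberg condition verified by the Birkhoff ergodic theorem. More precisely, since $\mathbb{E}_\xi Z_n(\mathbb{R}) = P_n$ and
$$\frac{1}{P_n}\,\mathbb{E}_\xi \int e^{itx} Z_n(dx) \;=\; \frac{1}{P_n}\,\mathbb{E}_\xi \sum_{u\in\mathbb{T}_n} e^{itS_u} \;=\; \prod_{i=0}^{n-1}\phi_i(t),$$
the probability measure $\frac{\mathbb{E}_\xi Z_n(\cdot)}{\mathbb{E}_\xi Z_n(\mathbb{R})}$ equals the convolution $\nu_0 \ast \cdots \ast \nu_{n-1}$; equivalently, it is the law of $S_n^\ast := Y_0 + \cdots + Y_{n-1}$, where, under $\mathbb{P}_\xi$, the $Y_i$ are independent with $Y_i \sim \nu_i$, $\mathbb{E}_\xi Y_i = \mu_i$ and $\mathrm{Var}_\xi(Y_i) = \sigma_i^2$. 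So the theorem reduces to showing that, for $\mathbb{P}$-almost every $\xi$, $(S_n^\ast - a_n)/b_n \Rightarrow \mathcal{N}(0,1)$ under $\mathbb{P}_\xi$.

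First I would record that the stationarity and ergodicity of $(\xi_n)$, together with $\mathbb{E}\sigma_0^2 \in (0,\infty)$, give via Birkhoff's theorem
$$\frac{b_n^2}{n} \;=\; \frac{1}{n}\sum_{i=0}^{n-1}\sigma_i^2 \;\longrightarrow\; \mathbb{E}\sigma_0^2 \;\in\;(0,\infty) \quad \text{a.s.,}$$
so that $b_n \to \infty$ a.s. Next I would verify the Lindeberg condition
$$L_n(\varepsilon) \;:=\; \frac{1}{b_n^2}\sum_{i=0}^{n-1} \mathbb{E}_\xi\!\left[(Y_i-\mu_i)^2 \mathbf{1}_{\{|Y_i-\mu_i|>\varepsilon b_n\}}\right] \;\longrightarrow\; 0 \quad \text{a.s.}$$
The trick is to freeze the truncation level: for any fixed $M>0$, once $\varepsilon b_n \ge M$ one has $L_n(\varepsilon) \le b_n^{-2}\sum_{i=0}^{n-1} h_i(M)$, where $h_i(M) := \mathbb{E}_\xi[(Y_i-\mu_i)^2 \mathbf{1}_{\{|Y_i-\mu_i|>M\}}]$ is a measurable function of $\xi_i$ alone, hence stationary and ergodic. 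Birkhoff's theorem gives $n^{-1}\sum_{i=0}^{n-1} h_i(M) \to \mathbb{E} h_0(M)$ a.s., and combining with $b_n^2/n \to \mathbb{E}\sigma_0^2$ yields
$$\limsup_{n\to\infty} L_n(\varepsilon) \;\le\; \frac{\mathbb{E} h_0(M)}{\mathbb{E}\sigma_0^2} \quad \text{a.s.}$$
Since $0 \le h_0(M) \le \sigma_0^2$ and $h_0(M) \downarrow 0$ a.s. as $M \to \infty$, dominated convergence gives $\mathbb{E} h_0(M) \to 0$, so $L_n(\varepsilon) \to 0$ a.s. for every $\varepsilon>0$.

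Applying the classical Lindeberg CLT to the sums $S_n^\ast$ under $\mathbb{P}_\xi$ then gives $(S_n^\ast - a_n)/b_n \Rightarrow \mathcal{N}(0,1)$ for $\mathbb{P}$-a.e.\ $\xi$, which is exactly the desired statement. The step I expect to be most delicate is the verification of Lindeberg: although the individual variances $\sigma_i^2$ and tail contributions are random, the stationarity-ergodicity of $\xi$ is what lets one replace the moving truncation at $\varepsilon b_n$ by a fixed level $M$ and then let $M \to \infty$ at the very end; the finite moment hypothesis $\mathbb{E}\sigma_0^2 < \infty$ is exactly what is needed for the dominated convergence at that last step. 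The hypothesis $|\mu_0| < \infty$ a.s.\ only serves to ensure that the centering $a_n = \sum_{i=0}^{n-1}\mu_i$ and the centered variables $Y_i - \mu_i$ are almost surely well defined, so it plays no further role.
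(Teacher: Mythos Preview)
Your proposal is correct and follows essentially the same approach as the paper: identify $\frac{\mathbb{E}_\xi Z_n(\cdot)}{\mathbb{E}_\xi Z_n(\mathbb{R})}$ with the convolution $\nu_0\ast\cdots\ast\nu_{n-1}$, then verify the Lindeberg condition by freezing the truncation level at a fixed $M$, applying the ergodic theorem both to $b_n^2/n$ and to the tail integrals, and finally letting $M\to\infty$ via dominated convergence. The only cosmetic difference is that the paper writes the frozen level as $ta\sqrt{M}$ (coming from $b_n\ge a\sqrt{n}\ge a\sqrt{M}$) rather than directly as $M$, but the argument is the same.
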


\begin{proof}[Proof]
Notice that $\frac{ \mathbb{E}_\xi Z_n(\cdot)}{ \mathbb{E}_\xi Z_n(\mathbb{R})}=\nu_0*\cdots*\nu_{n-1}(\cdot)$.
It suffices to show that $\{\nu_n\}$ satisfies Lindeberg condition, i.e., for all $t>0$ ,
\begin{equation}\label{cq11}
\lim_{n\rightarrow\infty}\frac{1}{b_n^2}\sum_{i=0}^{n-1}\int_{|x-\mu_i|>tb_n}|x-\mu_i|^2\nu_i(dx)=0\qquad a.s..
\end{equation}
By the ergodic theorem,
\begin{equation}\label{LTE1.4.2}
\lim_{n\rightarrow\infty}\frac{b_n^2}{n}=\lim_{n\rightarrow\infty}\frac{1}{n}\sum_{i=0}^{n-1}\sigma_i^2=\mathbb{E}\sigma_0^2>0\qquad
a.s..
\end{equation}
So for a positive constant $a$ satisfying
$0<a^2<\mathbb{E}\sigma_0^2$, there exists an integer $n_0$ depending on $a$
and $\xi$ such that $b_n^2\geq a^2n$ for all $n\geq n_0$.  Fix a constant $ M>0$ . For $n\geq\max\{n_0, M\}$, we have
$b_n^2\geq a^2n\geq a^2 M$, so that
\begin{eqnarray*}
\frac{1}{b_n^2}\sum_{i=0}^{n-1}\int_ {|x-\mu_i|>tb_n}|x-\mu_i|^2\nu_i(dx)
\leq\frac{1}{a^2n}\sum_{i=0}^{n-1}\int_
{|x-\mu_i|>ta\sqrt{M}}|x-\mu_i|^2\nu_i(dx).
\end{eqnarray*}
Taking superior limit in the above inequality , we obtain
\begin{eqnarray*}
&&\limsup_{n\rightarrow\infty}\frac{1}{b_n^2}\sum_{i=0}^{n-1}\int_ {|x-\mu_i|>tb_n}|x-\mu_i|^2\nu_i(dx)\\
&\leq&\frac{1}{a^2}\lim_{n\rightarrow\infty}\frac{1}{n}\sum_{i=0}^{n-1}\int_
{|x-\mu_i|>ta\sqrt{M}}|x-\mu_i|^2\nu_i(dx)\\
&=&\frac{1}{a^2} \mathbb{E}\int_{|x-\mu_0|>ta\sqrt{M}}|x-\mu_0|^2\nu_0(dx).
\end{eqnarray*}
Let $M\rightarrow\infty$, it obvious that $ \mathbb{E}\int_{|x-\mu_0|>ta\sqrt{M}}|x-\mu_0|^2\nu_0(dx)\rightarrow0$ by the dominated convergence theorem,
since $\mathbb{E}\sigma_0^2<\infty$. This completes the proof.
\end{proof}

If the environment is $i.i.d.$, we can obtain a central limit theorem for annealed means.

\begin{thm}[Central limit theorem for annealed means $\frac{\mathbb{E} Z_n(\cdot)}{\mathbb{E} Z_n(\mathbb{R})}$]\label{LTT1.1.4} Assume that $\{\xi_n\}$ are $i.i.d.$.
Let $$\bar\mu =\frac{1}{\mathbb{E} m_0} \mathbb{E}\int x X_0(dx)=\frac{1}{\mathbb{E} m_0}{\mathbb E \sum\limits_{i=1}^NL_i}$$
and
$$\bar{\sigma}^2=\frac{1}{\mathbb{E} m_0} \mathbb{E}\int (x-\bar\mu)^2 X_0(dx)=\frac{1}{\mathbb{E} m_0}\mathbb E \sum\limits_{i=1}^N(L_i-\bar \mu)^2.$$  If $|\bar{\mu}|<\infty$ and $\bar{\sigma}^2\in(0,\infty)$, then
$$\frac{ \mathbb{E}Z_n(-\infty,\bar{b}_nx+\bar{a}_n]}{ \mathbb{E}Z_n(\mathbb{R})}\rightarrow \Phi(x),$$
where $\bar{a}_n=n\bar{\mu}$ and $\bar{b}_n=\sqrt{n}\bar{\sigma}$.
\end{thm}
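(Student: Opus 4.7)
The plan is to reduce the statement to the classical central limit theorem for sums of i.i.d.\ random variables. The key observation is that when the environment $\xi = (\xi_n)$ is i.i.d., the annealed mean measure $\mathbb{E} Z_n(\cdot)/\mathbb{E} Z_n(\mathbb{R})$ is nothing but the $n$-fold convolution of a single probability measure on $\mathbb{R}$.

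First I would compute the characteristic function of $\mathbb{E} Z_n(\cdot)$. Arguing exactly as in the proof of Theorem \ref{LTT2.1.2} but with $it$ in place of $t$, the independence of the $\xi_n$ together with the branching property gives
\begin{equation*}
\int e^{itx}\, \mathbb{E} Z_n(dx) \;=\; \mathbb{E} \sum_{u \in \mathbb{T}_n} e^{itS_u} \;=\; \bigl(\mathbb{E}\, m_0(it)\bigr)^n \;=\; \Bigl(\mathbb{E} \sum_{i=1}^{N} e^{itL_i}\Bigr)^n.
\end{equation*}
In particular, $\mathbb{E} Z_n(\mathbb{R}) = (\mathbb{E} m_0)^n$, so the normalized characteristic function of $\mathbb{E} Z_n(\cdot)/\mathbb{E} Z_n(\mathbb{R})$ equals $\bar\phi(t)^n$, where
\begin{equation*}
\bar\phi(t) \;=\; \frac{1}{\mathbb{E} m_0}\,\mathbb{E} \sum_{i=1}^{N} e^{itL_i} \;=\; \int e^{itx}\, \bar\nu(dx), \qquad \bar\nu(A) := \frac{\mathbb{E} X_0(A)}{\mathbb{E} m_0}.
\end{equation*}

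This identifies $\mathbb{E} Z_n(\cdot)/\mathbb{E} Z_n(\mathbb{R})$ as the $n$-fold convolution $\bar\nu^{*n}$ of the probability measure $\bar\nu$, whose mean and variance are precisely $\bar\mu$ and $\bar\sigma^2$, both finite by hypothesis. Letting $Y_1,Y_2,\ldots$ be i.i.d.\ with common law $\bar\nu$, the distribution of the standardized sum $(Y_1 + \cdots + Y_n - n\bar\mu)/(\sqrt n\, \bar\sigma)$ coincides with the pushforward of $\bar\nu^{*n}$ under $x \mapsto (x - \bar a_n)/\bar b_n$. The classical Lindeberg-L\'evy central limit theorem then yields
\begin{equation*}
\frac{\mathbb{E} Z_n(-\infty,\bar b_n x + \bar a_n]}{\mathbb{E} Z_n(\mathbb{R})} \;=\; \bar\nu^{*n}(-\infty,\bar b_n x + \bar a_n] \;\longrightarrow\; \Phi(x)
\end{equation*}
for every $x \in \mathbb{R}$, which is the claim.

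There is no real obstacle here: the whole argument hinges on the factorization $\mathbb{E} \sum_{u \in \mathbb{T}_n} e^{itS_u} = (\mathbb{E} m_0(it))^n$, which is the characteristic-function analogue of the Laplace-transform identity already used in the proof of Theorem \ref{LTT2.1.2}. Once that observation is in place the result is immediate from the classical CLT, and one does not need to invoke the varying-environment machinery of Section \ref{LTS6}.
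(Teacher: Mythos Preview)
Your proof is correct and follows essentially the same approach as the paper: both compute the characteristic function of $\mathbb{E}Z_n(\cdot)/\mathbb{E}Z_n(\mathbb{R})$, identify it as $\bar\phi(t)^n$ (equivalently, recognize the measure as the $n$-fold convolution of $\bar\nu(\cdot)=\mathbb{E}X_0(\cdot)/\mathbb{E}m_0$), and then invoke the classical CLT for i.i.d.\ summands. Your write-up is in fact slightly more direct than the paper's, which routes the same identification through an explicit characteristic-function match before applying the continuity theorem.
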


\begin{proof}[Proof]
Denote $\bar\nu_n(\cdot)= \frac{ \mathbb{E}Z_n(\bar{b}_n\cdot+\bar{a}_n)}{ \mathbb{E}Z_n(\mathbb{R})}$. The characteristic function of
$\bar\nu_n$ is denoted by $\bar{\varphi}_n$. We can calculate
\begin{eqnarray*}
\bar{\varphi}_n(t)=\int e^{itx}\bar{\nu}_n(dx)
&=&(\mathbb{E} m_0)^{-n} \mathbb{E}\int e^{itx}Z_n(\bar{b}_ndx+\bar{a}_n)\\
&=&(\mathbb{E} m_0)^{-n}e^{-it\bar{a}_n/\bar{b}_n}\mathbb{E} \prod_{i=0}^{n-1} \mathbb{E}_\xi\int e^{itx/\bar{b}_n}X_n(dx)\\
&=&e^{-it\bar{a}_n/\bar{b}_n}\left(\frac{\mathbb{E} m_0(t/\bar{b}_n)}{\mathbb{E} m_0}\right)^n,
\end{eqnarray*}
where $m_n(t):= \mathbb{E}_\xi\int e^{itx}X_n(dx)$. The last step above is from the independency of $(\xi_n)$.
Denote $F(x)=\frac{\mathbb{E}X_0(x)}{\mathbb{E} m_0}$, then by the classic central limit theorem, we have
$$F^{*n}(\bar b_nx+\bar{a_n})\rightarrow \Phi(x).$$
Therefore,
$$\int e^{itx}F^{*n}(\bar b_ndx+\bar{a_n})\rightarrow g(t):=\int e^{itx}p(x)dx,$$
where $p(x)=\frac{1}{\sqrt{2\pi}}e^{-x^2/2}$ is the density function of standard normal distribution.
Notice that
\begin{eqnarray*}
\int e^{itx}F^{*n}(\bar b_ndx+\bar{a_n})
&=&e^{-it\bar{a}_n/\bar{b}_n}\int e^{ity/\bar{b}_n}F^{*n}(dy)\\
&=&e^{-it\bar{a}_n/\bar{b}_n}\left(\int e^{ity/\bar{b}_n}F(dy)\right)^n\\
&=&e^{-it\bar{a}_n/\bar{b}_n}\left(\frac{\mathbb{E} m_0(t/\bar{b}_n)}{\mathbb{E} m_0}\right)^n=\bar{\varphi}_n(t).
\end{eqnarray*}
We in fact have obtained
$\bar{\varphi}_n(t)\rightarrow g(t)$, it follows that $\bar{\nu}_n(x)\rightarrow \Phi(x)$  by the continuity theorem.
\end{proof}

By an argument similar to the proof of Theorem \ref{LTT1.1.4}, we obtain a central limit theorem as follows:

\begin{thm}[Central limit theorem for $\mathbb{E}\frac{Z_n(\cdot)}{ \mathbb{E}_\xi(\mathbb{R})}$]\label{LTT1.1.4'}  Assume that $\{\xi_n\}$ are $i.i.d.$.
Let $\bar{\mu}'= \mathbb{E}\int x \nu_0(dx)=\mathbb E\left(\frac{1}{m_0}\sum\limits_{i=1}^N L_i\right)$ and
$\bar{\sigma}'^2= \mathbb{E}\int (x-\bar\mu')^2 \nu_0(dx)=\mathbb E\left(\frac{1}{m_0}\sum\limits_{i=1}^N (L_i-\bar \mu')^2\right)$. If $|\bar{\mu}'|<\infty$ and $\bar{\sigma}'^2\in(0,\infty)$, then
$$\mathbb{E}\frac{ Z_n(-\infty,\bar{b}'_nx+\bar{a}'_n]}{ \mathbb{E}_\xi Z_n(\mathbb{R})}\rightarrow \Phi(x),$$
where $\bar{a}'_n=n\bar{\mu}'$ and $\bar{b}'_n=\sqrt{n}\bar{\sigma}'$.
\end{thm}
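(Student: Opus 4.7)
The plan is to imitate the proof of Theorem \ref{LTT1.1.4} almost verbatim, simply replacing the annealed normalization $\mathbb{E}Z_n(\mathbb{R})$ by the quenched normalization $\mathbb{E}_\xi Z_n(\mathbb{R})$. The i.i.d.\ assumption on $\{\xi_n\}$ will still collapse the relevant characteristic function into an $n$-th power, and the classical CLT together with the continuity theorem will then finish the proof.

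First I would compute the characteristic function $\bar\varphi_n'$ of $\bar\nu_n'(\cdot) := \mathbb{E}\frac{Z_n(\bar b_n'\cdot + \bar a_n')}{\mathbb{E}_\xi Z_n(\mathbb{R})}$. Taking expectation $\mathbb{E}_\xi$ first (using Fubini), and using $\mathbb{E}_\xi Z_n(\mathbb{R}) = \prod_{i=0}^{n-1}m_i$ together with $\mathbb{E}_\xi \int e^{itx}Z_n(dx) = \prod_{i=0}^{n-1}m_i(t)$, one should obtain
\begin{equation*}
\bar\varphi_n'(t) = e^{-it\bar a_n'/\bar b_n'}\,\mathbb{E}\prod_{i=0}^{n-1}\frac{m_i(t/\bar b_n')}{m_i}.
\end{equation*}
Since $\{\xi_n\}$ is i.i.d., the random variables $m_i(\cdot)/m_i$ are i.i.d.\ as well, so the expectation of the product factors as the product of expectations, giving
\begin{equation*}
\bar\varphi_n'(t) = e^{-it\bar a_n'/\bar b_n'}\!\left(\mathbb{E}\frac{m_0(t/\bar b_n')}{m_0}\right)^{\!n}.
\end{equation*}

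The next step is to recognize the base of the power as a classical characteristic function. By Fubini, $\mathbb{E}\frac{m_0(t)}{m_0} = \mathbb{E}\int e^{itx}\nu_0(dx) = \int e^{itx}\,d\bar F(x)$, where $\bar F(A) := \mathbb{E}\nu_0(A)$ is a genuine probability distribution (because $\nu_0(\mathbb{R}) = \mathbb{E}_\xi X_0(\mathbb{R})/m_0 = 1$ a.s.). From the very definitions of $\bar\mu'$ and $\bar\sigma'^2$, $\bar F$ has mean $\bar\mu'$ and variance $\bar\sigma'^2$, both finite by hypothesis, with $\bar\sigma'^2>0$. Therefore
\begin{equation*}
\bar\varphi_n'(t) = e^{-it\bar a_n'/\bar b_n'}\!\left(\int e^{itx/\bar b_n'}\,d\bar F(x)\right)^{\!n},
\end{equation*}
which is exactly the characteristic function of the distribution $x\mapsto \bar F^{*n}(\bar b_n' x + \bar a_n')$.

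Finally I would invoke the classical one-dimensional CLT applied to the i.i.d.\ distribution $\bar F$, which yields $\bar F^{*n}(\bar b_n' x + \bar a_n') \to \Phi(x)$ at every $x$, hence $\bar\varphi_n'(t) \to e^{-t^2/2}$ pointwise, and then the continuity theorem gives $\bar\nu_n'((-\infty,x]) \to \Phi(x)$, which is the desired statement. I do not anticipate any real obstacle: the only substantive point is the factorization of the characteristic function into an $n$-th power, which is immediate from the i.i.d.\ assumption on $\{\xi_n\}$, and the moment assumptions on $\bar\mu'$ and $\bar\sigma'^2$ are precisely what the classical CLT requires.
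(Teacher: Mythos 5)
Your proof is correct and follows essentially the same route as the paper's intended proof, which the paper indicates is obtained "by an argument similar to the proof of Theorem \ref{LTT1.1.4}": compute the characteristic function, collapse it to an $n$-th power using the i.i.d.\ assumption on $\{\xi_n\}$, identify the base as the characteristic function of the probability law $\bar F = \mathbb{E}\nu_0$ with mean $\bar\mu'$ and variance $\bar\sigma'^2$, and conclude by the classical CLT and the continuity theorem.
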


\section{Central limit theorem and local limit theorem for $ {\frac{Z_n (\cdot)}{Z_n(\mathbb{R})}}$}\label{LTS10}
As we mentioned in last section (Section \ref{LTS9}), we can directly use the results of Theorems \ref{LTT1.1.1} and \ref{LTT1.2} considering the quenched law $\mathbb{P}_\xi$ and the corresponding expectation $\mathbb{E}_\xi$. However,
by the good properties of stationary and ergodic random process,
we have some similar but simper and more precise results than Theorems \ref{LTT1.1.1} and \ref{LTT1.2}.

\begin{thm}\label{LTT1.1.5}
Assume that for some $\varepsilon>0$,
$$\upsilon(\varepsilon):= \mathbb{E}\int|x|^{\varepsilon}\nu_0(dx)<\infty,$$
and $b_n=b_n(\xi)$ satisfying
$$b_n^{-1}=o(n^{-\gamma })\;a.s.\quad \text{for some} \;\;\gamma >0,$$
then
$$\Psi_n(t/b_n)-W\prod_{i=0}^{n-1}\phi_i(t/b_n)\rightarrow0\qquad a.s..$$
If in addition (A) holds wit
$\{a_n(\xi),b_n(\xi)\}$ and $g_\xi$, then
\begin{equation}\label{LTE1.1.15}
 e^{-ita_n}\Psi_n(t/b_n)\rightarrow g_{\xi}(t)W\qquad a.s.,
\end{equation}
 and for $x$ a continuity point of $L_{\xi}$,
\begin{equation*}
P_n^{-1}Z_n(-\infty,b_n(x+a_n)]\rightarrow
L_{\xi}(x)W\qquad a.s..
\end{equation*}
Moreover, (\ref{LTE1.1.15}) holds uniformly for $u$ in compact sets.
\end{thm}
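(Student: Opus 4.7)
The plan is to carry over the truncation-plus-martingale architecture used in Section \ref{LTS7} to prove Theorem \ref{LTT1.1.1}, but to work under the quenched law $\mathbb{P}_\xi$ and exploit the stationarity and ergodicity of $\xi$ to replace the ad hoc growth hypotheses of Theorem \ref{LTT1.1.1} by the single moment condition $\upsilon(\varepsilon)<\infty$. Once $\xi$ is fixed, the system is a branching random walk in a varying environment, so we fix $\kappa>0$ and reintroduce, verbatim as in Section \ref{LTS7}, the truncated objects $\tilde{X}_{n,\kappa}$, $\tilde m_{n,\kappa}$, $\zeta_n:=\tilde\phi_{n,\kappa}$, $\omega_n:=\tilde m_{n,\kappa}/m_n$, and the one-step split
\begin{equation*}
\Psi_{n+1}(t)-\omega_n\zeta_n(t)\Psi_n(t) = A_n(t)+B_n(t),\qquad \mathbb{E}_\xi(B_n\mid\mathcal{F}_n)=0.
\end{equation*}
Iterating from a cut-off $k=J(n)\sim n^{1/\alpha}$ and invoking the analogue of Lemma \ref{LTL1.2.2}, it suffices to show that the three quantities appearing in (\ref{LTE1.2.3}) each vanish $\mathbb{P}_\xi$-a.s.

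The simplifications come from Birkhoff's ergodic theorem. First, $\upsilon_n(\varepsilon)$ is a Birkhoff sum of the stationary variable $\int|x|^\varepsilon\nu_0(dx)\circ T^n$, so $\upsilon_n(\varepsilon)/n\to\upsilon(\varepsilon)$ a.s.\ and hence $\upsilon_n(\varepsilon)=O(n)$ a.s.; combining with the hypothesis $b_n^{-1}=o(n^{-\gamma})$ and choosing $\alpha$ so large that $1-\alpha\varepsilon\gamma<0$, the remainder term is dispatched exactly as in Lemma \ref{LTL1.2.5}. Secondly, the standing assumption (\ref{LTE1.2}) together with the exponential lower bound $\tfrac{1}{n}\log P_n\to \mathbb{E}\log m_0>0$ a.s. makes $\sum_n(1-\tilde m_{n,\kappa}/m_n)$ converge a.s.: on $\{N_n(\log N_n)^\kappa>P_{n+1}\}$ one has $N_n$ exponentially large, so the $L\log L$ assumption from (\ref{LTE1.2}) applied in the stationary regime and a Borel--Cantelli argument give summability. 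This controls the $A$-piece as in Lemma \ref{LTL1.2.3}.

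The main obstacle is the $B$-piece, which, as in Lemma \ref{LTL1.2.4}, requires a quenched truncated second-moment bound on $N_n^2 I_n(N_n)/m_n^2$ that is not directly available from an $L\log L$ assumption. The remedy is the same as in Section \ref{LTS7}: choose $\kappa$ large and exploit $N_n^2 I_n(N_n)\le N_n(\log N_n)^{-\kappa}P_{n+1}$ to convert the second moment into $N_n(\log N_n)^{-\kappa}P_{n+1}/m_n^2$. Because $P_{n+1}/(P_nm_n)=1$ while the $B$-piece carries a factor $1/P_n$, the exponential growth of $P_n$ defeats the polynomial factor $i^\alpha$ arising from $|J^{-1}(j)|$, and taking expectation under $\mathbb{P}$ yields $\sum_n\mathbb{E}|C_n|^2<\infty$ by stationarity plus (\ref{LTE1.2}).

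Once all three pieces are a.s.\ negligible, $\Psi_n(t/b_n)-W\prod_{i=0}^{n-1}\phi_i(t/b_n)\to 0$ a.s.\ for each fixed $t$. The upgrade to a null set independent of $t$ uses uniform continuity on compact sets together with density of the rationals, yielding (\ref{LTE1.1.15}); the stated convergence of the cumulative distribution at continuity points of $L_\xi$ then follows from Condition (A) and the continuity theorem applied under $\mathbb{P}_\xi$.
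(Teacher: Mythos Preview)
Your overall architecture matches the paper's: reduce to the varying-environment theorem by verifying, under the quenched law and via ergodicity together with the standing $L\log L$ assumption (\ref{LTE1.2}), the three ingredients used in Section~\ref{LTS7}. The handling of the remainder term via $\upsilon_n(\varepsilon)=O(n)$ a.s.\ and of the $A$-piece via $\sum_n(1-\tilde m_{n,\kappa}/m_n)<\infty$ is essentially what the paper does (the latter is Lemma~\ref{LTL1.4.1} with $\beta=0$).

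However, your $B$-piece argument has a genuine gap. The crude bound $N_i^2 I_i(N_i)\le N_i(\log N_i)^{-\kappa}P_{i+1}$ yields, after dividing by $P_i m_i^2$,
\[
\frac{1}{P_i m_i^2}\,\mathbb{E}_\xi N_i^2 I_i(N_i)\ \le\ \frac{P_{i+1}}{P_i m_i^2}\,\mathbb{E}_\xi N_i(\log N_i)^{-\kappa}\ =\ \frac{1}{m_i}\,\mathbb{E}_\xi N_i(\log N_i)^{-\kappa},
\]
and, as you yourself note, $P_{i+1}/(P_i m_i)=1$, so there is \emph{no} exponential factor left to ``defeat the polynomial $i^\alpha$''. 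Taking annealed expectations and summing gives $\sum_i i^\alpha\,\mathbb{E}\!\left[N_0(\log N_0)^{-\kappa}/m_0\right]=\infty$. The paper's remedy (Lemma~\ref{LTL1.2.4}) is more delicate: one compares via $f(x)=x(\log x)^{-(\alpha+1+\delta_1)}$, so that $I_i(N_i)\le f(P_{i+1})/f(N_i(\log N_i)^\kappa)$ simultaneously cancels the $P_i$ \emph{and} extracts the factor $(\log P_{i+1})^{-(\alpha+1+\delta_1)}\asymp i^{-(\alpha+1+\delta_1)}$, which dominates $i^\alpha$ and leaves a term controlled by $\frac{1}{m_i i^{1+\delta_1}}\mathbb{E}_\xi N_i\log^+N_i$ (taking $\kappa\ge\alpha+\delta_1$). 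In the random-environment setting the resulting series is then shown to be a.s.\ finite by taking the annealed expectation and invoking stationarity plus (\ref{LTE1.2}); this is precisely the verification of (\ref{LTE1.2.6}) that the paper spells out in its proof of Theorem~\ref{LTT1.1.5}.
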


The following result is the most important central limit theorem of this paper.

\begin{thm}[Central limit theorem for $\frac{Z_n(\cdot)}{Z_n(\mathbb{R})}$]\label{LTT1.1.6}
 If $\mathbb{E}|\mu_0|^\varepsilon<\infty$ for some $\varepsilon>0$ and $\mathbb{E}\sigma_0^2\in(0,\infty)$, then
 \begin{equation}\label{LTET10.2}
\frac{ Z_n(-\infty,b_nx+a_n]}{ Z_n(\mathbb{R})}\rightarrow  \Phi(x)\quad a.s.\; on\;\{Z_n(\mathbb{R})\rightarrow\infty\},
\end{equation}
  where $a_n=\sum_{i=0}^{n-1}\mu_i$ and $b_n=(\sum_{i=0}^{n-1}\sigma_i^2)^{1/2}$.
\end{thm}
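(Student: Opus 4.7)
The strategy is to apply Theorem \ref{LTT1.1.5} under the quenched law $\mathbb{P}_\xi$, choosing $a_n=\sum_{i=0}^{n-1}\mu_i$ and $b_n=(\sum_{i=0}^{n-1}\sigma_i^2)^{1/2}$ exactly as in the statement, and then transferring the conclusion from the deterministic normalization $P_n$ to the random one $Z_n(\mathbb{R})$ via the martingale limit $W_n=Z_n(\mathbb{R})/P_n\to W$. For a.e.\ $\xi$ the process is a branching random walk in the varying environment $\{\nu_n\}$, and so the hypotheses of Theorem \ref{LTT1.1.5} must be checked for this choice of $(a_n,b_n)$.

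To obtain condition (A) I would first establish condition (B) with $L=\Phi$. By the ergodic theorem $n^{-1}b_n^2\to\mathbb{E}\sigma_0^2>0$ a.s., hence $b_{n+1}/b_n\to 1$, and it suffices to verify the Lindeberg condition for $\{\nu_n\}$; this is carried out exactly as in the proof of Theorem \ref{LTT1.1.3}, by using $b_n^2\geq a^2 n$ eventually and applying the ergodic theorem to the stationary sequence $\int_{|x-\mu_i|>ta\sqrt{M}}(x-\mu_i)^2\nu_i(dx)$ followed by $M\to\infty$. This yields (A) with $b_n'=b_n$ and $a_n'=(a_n+o(b_n))/b_n$. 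The scaling condition $b_n^{-1}=o(n^{-\gamma})$ for some $\gamma>0$ is immediate from $b_n\sim\sqrt{n\,\mathbb{E}\sigma_0^2}$. For the moment hypothesis, take $\varepsilon'=\min(\varepsilon,2)>0$ and estimate
$$\int|x|^{\varepsilon'}\nu_0(dx)\leq C\Bigl(|\mu_0|^{\varepsilon'}+\int|x-\mu_0|^{\varepsilon'}\nu_0(dx)\Bigr)\leq C\bigl(|\mu_0|^{\varepsilon'}+\sigma_0^{\varepsilon'}\bigr),$$
where the second inequality is Jensen in the probability measure $\nu_0$ applied to the concave map $y\mapsto y^{\varepsilon'/2}$. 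Taking $\mathbb{E}$ and using $\mathbb{E}\sigma_0^{\varepsilon'}\leq (\mathbb{E}\sigma_0^2)^{\varepsilon'/2}<\infty$ together with $\mathbb{E}|\mu_0|^{\varepsilon'}<\infty$ gives $\upsilon(\varepsilon')<\infty$.

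Theorem \ref{LTT1.1.5} then yields, a.s., $P_n^{-1}Z_n(-\infty,b_n x+a_n+o(b_n)]\to\Phi(x)W$ for every $x\in\mathbb{R}$ (every point is a continuity point of $\Phi$). The $o(b_n)$ perturbation of the boundary corresponds to an $o(1)$ shift of the argument of $\Phi$, and is absorbed by a standard sandwich with $\Phi(x\pm\delta)W$ followed by $\delta\downarrow 0$. Dividing by $W_n=Z_n(\mathbb{R})/P_n\to W$ and restricting to $\{W>0\}$, which coincides a.s.\ with the survival event $\{Z_n(\mathbb{R})\to\infty\}$, gives \eqref{LTET10.2}. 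A countable-intersection argument over rational $x$, together with monotonicity of distribution functions and continuity of $\Phi$, makes the null set independent of $x$.

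The main obstacle I anticipate is the bookkeeping between conditions (A) and (B), i.e.\ verifying that the $o(b_n)$ discrepancy inherited when passing from (B) to (A) does not contaminate the limit after rescaling by $b_n$ inside $\Phi$; and the derivation of $\upsilon(\varepsilon')<\infty$ from the weaker hypothesis $\mathbb{E}|\mu_0|^\varepsilon<\infty$ (rather than $\mathbb{E}|\mu_0|<\infty$) via the Jensen step at small $\varepsilon'$. The remaining ingredients are either direct applications of the ergodic theorem or transcriptions of the arguments already appearing in the proofs of Theorems \ref{LTT1.1.3} and \ref{LTT1.1.5}.
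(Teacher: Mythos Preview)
Your proposal is correct and follows essentially the same route as the paper's proof: verify the hypotheses of Theorem~\ref{LTT1.1.5} (the moment condition $\upsilon(\varepsilon')<\infty$ via the Jensen/centering bound, the growth of $b_n$ via the ergodic theorem, and condition~(A) via the Lindeberg argument of Theorem~\ref{LTT1.1.3}), then divide by $W_n\to W$. Your anticipated obstacle about the $o(b_n)$ shift is in fact a non-issue: condition~(B) with $(a_n,b_n)$ is \emph{exactly} condition~(A) with $(a_n/b_n,b_n)$ by L\'evy's continuity theorem, so the threshold in the conclusion of Theorem~\ref{LTT1.1.5} is precisely $b_n x+a_n$ and no sandwich argument is needed.
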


\noindent\emph{Remark.} If $\mathbb{E}\int x^2\nu_0(dx)<\infty$,  it can be easily seen that $\mathbb{E}\mu_0^2<\infty$ and $\mathbb{E}\sigma_0^2<\infty$.
\\*

Theorem \ref{LTT1.1.6} is an extension of the results of Kaplan and Asmussen (1976, II, Theorem 1) and Biggins (1990) on deterministic branching random walks.
\\*

Similarly to the case of varying environment, we also have the local
limit theorems corresponding to Theorems \ref{LTT1.1.5} and    \ref{LTT1.1.6}
respectively.

\begin{thm}\label{LTT1.1.8}
Assume that $\nu_0$ is non-lattice a.s., (A) holds  with $\{a_n(\xi),b_n(\xi)\}$ satisfying $b_n\sim \theta n^\gamma
a.s.$  for some constants $0<\gamma\leq\frac{1}{2}$ and $\theta>0$,  and
$g_\xi$ is integrable. If $\upsilon(\varepsilon)<\infty$ for some
$\varepsilon>0$, and
\begin{equation}\label{LTET1.1.81}
\mathbb{E}\frac{N}{m_0}(\log^+ N)^{1+\beta}<\infty
\end{equation}
for some $\beta>\gamma$, then $\forall h>0$,
$$\sup_{x\in\mathbb{R}}|b_nP_n^{-1}Z_n(x,x+h)-Whp_L(x/b_n-a_n)|\rightarrow0\qquad a.s. ,$$
where $p_L$ is the density function of $L_\xi$.
\end{thm}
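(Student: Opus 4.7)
The plan is to reduce this local limit theorem in random environment to the varying-environment result (Theorem \ref{LTT1.2}) applied under the quenched law $\mathbb{P}_\xi$, verifying each of its hypotheses $\mathbb{P}$-almost surely in $\xi$ using the stationarity and ergodicity of $(\xi_n)$. First I would dispatch the growth conditions: (\ref{LTE1.1.1}) follows from (\ref{LTE1.2}) and Birkhoff's ergodic theorem, which give $\frac{1}{n}\log P_n \to \mathbb{E}\log m_0 > 0$ and $\frac{1}{n}\log m_n \to 0$ a.s.; condition (\ref{LTE1.1.41}) holds because $\int|x|^\varepsilon \nu_i(dx)$ is stationary with finite mean $\upsilon(\varepsilon)$, so the ergodic theorem yields $\upsilon_n(\varepsilon) = O(n)$ a.s., which is $o(n^{\gamma_1})$ for any $\gamma_1 > 1$; and (\ref{LTE1.1.4}) is immediate from $b_n \sim \theta n^\gamma$ with $\gamma_2 = \gamma$.

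Next, the key summability condition (\ref{LTET1.1.22}) in the random-environment case is obtained by a Fubini argument. Setting $c_n(\xi) := \frac{1}{m_n}\mathbb{E}_\xi N_n (\log^+ N_n)^{1+\beta}$, this is a stationary sequence in $n$ whose mean under $\mathbb{P}$ equals $\mathbb{E}\frac{N}{m_0}(\log^+N)^{1+\beta} < \infty$ by hypothesis (\ref{LTET1.1.81}). For any $\delta > 0$,
\begin{equation*}
\mathbb{E}\sum_{n\ge 2} \frac{c_n}{n(\log n)^{1+\delta}} \;=\; \mathbb{E}[c_0]\sum_{n\ge 2}\frac{1}{n(\log n)^{1+\delta}} \;<\; \infty,
\end{equation*}
so the series converges $\mathbb{P}$-a.s., yielding (\ref{LTET1.1.22}) for a.e.\ $\xi$.

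The delicate hypothesis is (\ref{LTET1.1.21}): $\sup_i \sup_{|t|\ge\iota}|\phi_i(t)| = c_\iota < 1$. The non-lattice assumption on $\nu_0$ a.s.\ gives $|\phi(\xi_0,t)|<1$ for every $t\ne 0$, and by continuity $\rho(\xi_0) := \sup_{t\in K}|\phi(\xi_0,t)|<1$ a.s.\ for any compact $K \subset \mathbb{R}\setminus\{0\}$; in particular this holds for $K = U_2 = \{\epsilon \le |t| \le 1/a\}$. Choosing $\eta>0$ small enough that $p := \mathbb{P}(\rho(\xi_0)\le 1-\eta) > 0$, the ergodic theorem guarantees that for $\mathbb{P}$-a.e.\ $\xi$ a fraction at least $p/2$ of the indices $i\in\{k,\dots,n-1\}$ satisfy $\rho(\xi_i)\le 1-\eta$ for all $n$ large. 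Hence $\prod_{i=k}^{n-1}|\phi_i(t)| \le (1-\eta)^{p(n-k)/2}$ uniformly on $U_2$, and this geometric decay absorbs the polynomial factor $b_n = O(n^\gamma)$, providing exactly the bound needed in place of (\ref{LTE1.3.6}) within the proof of Lemma \ref{LTL1.3.2}.

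With the hypotheses of Theorem \ref{LTT1.2} verified $\mathbb{P}$-a.s., that theorem applied under $\mathbb{P}_\xi$ delivers the claimed local limit theorem: the density convergence $\sup_x|b_n D_a^{(n)}(b_n(x+a_n)) - Wp_L(x)| \to 0$ a.s., followed by Stone's argument (the quenched analogue of Lemma \ref{LTL1.3.3}), gives the statement for all $0<h<\delta$; the case of general $h>0$ is obtained by partitioning $(x,x+h]$ into sub-intervals of length $<\delta$ and using continuity of $p_L$. The main obstacle is the third step: promoting the pointwise non-lattice condition to the uniform bound required by the varying-environment theorem, which is not direct and hinges on the ergodic density argument sketched above rather than a genuine $\xi$-uniform hypothesis.
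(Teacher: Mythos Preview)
Your approach is essentially the paper's: verify (\ref{LTET1.1.22}) from (\ref{LTET1.1.81}) by the same Fubini computation, and instead of checking the uniform condition (\ref{LTET1.1.21}) directly, bypass it by an ergodic argument inserted at the only point of the proof of Lemma \ref{LTL1.3.2} where it is used. The paper packages this as a separate lemma (Lemma \ref{LTL1.4.2}) proving the inequality (\ref{LTE1.4.4}), i.e.\ the random--environment replacement for (\ref{LTE1.3.5}).

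Two small points of comparison. First, the paper's ergodic argument is slightly more direct than yours: with $c_i=\sup_{U_2}|\phi_i|<1$ a.s., it applies Birkhoff to $\log c_i$ so that $\frac{1}{n}\sum_{i=k}^{n-1}\log c_i\to\mathbb{E}\log c_0<0$, which already kills the polynomial factor $b_n$ on $U_2$; your density--of--good--indices argument achieves the same geometric decay but is a coarser version of the same idea. Second, and more substantively, your sketch only addresses $U_2$ (the replacement for (\ref{LTE1.3.6})). Condition (\ref{LTET1.1.21}) is \emph{also} used on $U_1$: in the varying--environment proof it feeds, via Cram\'er's lemma (Lemma \ref{LTL1.3.1}), into the uniform Gaussian bound $|\phi_i(t)|\le e^{-\gamma_1 t^2}$ that yields (\ref{LTE1.3.7})--(\ref{LTE1.3.8}). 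In random environment there is no uniform $\gamma_1$; the paper handles this by applying Cram\'er's lemma with the $i$-dependent bound $c_i$ to get $|\phi_i(t)|\le e^{-\alpha_i t^2}$ and then using the ergodic theorem on $\sum_{i=k}^{n-1}\alpha_i/b_n^2$. Your density argument would extend to cover $U_1$ in the same way (apply Cram\'er's lemma only at the ``good'' indices, getting $\prod_{i=k}^{n-1}|\phi_i(t)|\le e^{-c(n-k)t^2}$), but as written this step is missing from your outline.
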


Theorem \ref{LTT1.1.9} below is a direct  consequence of Theorem \ref{LTT1.1.8}.  To verify the conditions of Theorem \ref{LTT1.1.8}, see the proof of Theorem \ref{LTT1.1.6}.

\begin{thm}[Local limit theorem for $\frac{Z_n(\cdot)}{Z_n(\mathbb{R})}$]\label{LTT1.1.9}
Assume that $\nu_0$ is non-lattice a.s.. If $\mathbb{E}|\mu_0|^\varepsilon<\infty$ for some $\varepsilon>0$,  $\mathbb{E}\sigma_0^2\in(0,\infty)$, and
$$\mathbb{E}\frac{N}{m_0}(\log^+ N)^{\beta}<\infty$$
for some $\beta>\frac{3}{2}$,
then $\forall h>0$,
$$\sup_x|b_n\frac{Z_n(x,x+h)}{Z_n(\mathbb{R})} -hp(\frac{x-a_n}{b_n})|\rightarrow0\quad    a.s.\; on\;\{Z_n(\mathbb{R})\rightarrow\infty\},$$
where $a_n=\sum_{i=0}^{n-1}\mu_i$, $b_n=(\sum_{i=0}^{n-1}\sigma_i^2)^{1/2}$, and
$p(x)=\frac{1}{\sqrt{2\pi}}e^{-x^2/2}$ is the density function of standard normal distribution.
\end{thm}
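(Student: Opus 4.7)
The plan is to derive Theorem \ref{LTT1.1.9} as a corollary of the local limit theorem in varying environment (Theorem \ref{LTT1.1.8}) applied conditionally on the environment $\xi$, and then pass from the $P_n$\nobreakdash-normalization to the $Z_n(\mathbb{R})$\nobreakdash-normalization using $Z_n(\mathbb{R})/P_n\to W$.

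First I would identify the limit law and verify Condition (A). By Theorem \ref{LTT1.1.3}, the hypotheses $\mathbb{E}|\mu_0|^\varepsilon<\infty$ (which forces $|\mu_0|<\infty$ a.s.) and $\mathbb{E}\sigma_0^2\in(0,\infty)$ imply the quenched CLT
$$\frac{\mathbb{E}_\xi Z_n(-\infty,b_n x+a_n]}{\mathbb{E}_\xi Z_n(\mathbb{R})}\longrightarrow\Phi(x)\quad\text{a.s.},$$
with $a_n=\sum_{i=0}^{n-1}\mu_i$ and $b_n=(\sum_{i=0}^{n-1}\sigma_i^2)^{1/2}$. Rewriting in terms of characteristic functions, this gives $e^{-it a_n/b_n}\prod_{i=0}^{n-1}\phi_i(t/b_n)\to e^{-t^2/2}$ a.s.\ for every $t$, so Condition (A) holds with the centering $a_n/b_n$, the scaling $b_n$, and the integrable limit $g_\xi(t)=e^{-t^2/2}$. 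By the ergodic theorem, $b_n^2/n\to\mathbb{E}\sigma_0^2>0$ a.s., hence $b_n\sim\sqrt{\mathbb{E}\sigma_0^2}\,n^{1/2}$, matching the requirement $b_n\sim\theta n^\gamma$ with $\gamma=\tfrac12$ and $\theta=\sqrt{\mathbb{E}\sigma_0^2}$.

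Next I would verify the remaining hypotheses of Theorem \ref{LTT1.1.8}. For $\upsilon(\varepsilon)<\infty$, set $\varepsilon':=\min(\varepsilon,2)$. The elementary inequality $|x|^{\varepsilon'}\le 2^{\varepsilon'-1}(|x-\mu_0|^{\varepsilon'}+|\mu_0|^{\varepsilon'})$ combined with Jensen's inequality $\int|x-\mu_0|^{\varepsilon'}\nu_0(dx)\le\sigma_0^{\varepsilon'}\le 1+\sigma_0^2$ gives
$$\upsilon(\varepsilon')=\mathbb{E}\!\int |x|^{\varepsilon'}\nu_0(dx)\le C\bigl(1+\mathbb{E}\sigma_0^2+\mathbb{E}|\mu_0|^{\varepsilon'}\bigr)<\infty.$$
For the offspring moment requirement (\ref{LTET1.1.81}), since the hypothesis provides $\beta>\tfrac32$, set $\beta':=\beta-1>\tfrac12=\gamma$; then $\mathbb{E}\tfrac{N}{m_0}(\log^+N)^{1+\beta'}=\mathbb{E}\tfrac{N}{m_0}(\log^+N)^{\beta}<\infty$ as required. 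The non-lattice assumption on $\nu_0$ is inherited directly.

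Applying Theorem \ref{LTT1.1.8} with $p_L=p$ (the standard normal density) then yields, for every fixed $h>0$,
$$\sup_{x\in\mathbb{R}}\left|\,b_n P_n^{-1}Z_n(x,x+h)-W h\,p\!\left(\tfrac{x-a_n}{b_n}\right)\right|\longrightarrow 0\quad\text{a.s.}$$
To finish, note that under (\ref{LTE1.2}) the event $\{Z_n(\mathbb{R})\to\infty\}$ coincides a.s.\ with $\{W>0\}$, on which $P_n/Z_n(\mathbb{R})\to 1/W$ a.s. Writing
$$b_n\frac{Z_n(x,x+h)}{Z_n(\mathbb{R})}=\bigl(b_n P_n^{-1}Z_n(x,x+h)\bigr)\cdot\frac{P_n}{Z_n(\mathbb{R})}$$
and using the boundedness of $p$ to propagate the uniformity in $x$, dividing by $W$ delivers the stated conclusion. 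The main bookkeeping hurdle is the first step: translating the quenched CLT of Theorem \ref{LTT1.1.3} into the exact form of Condition (A), keeping the $1/b_n$ rescaling of the centering consistent with the conventions of Theorem \ref{LTT1.1.8}; the remaining verifications are routine consequences of the hypotheses and the ergodic theorem.
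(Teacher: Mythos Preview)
Your proposal is correct and follows essentially the same route as the paper: the paper states that Theorem \ref{LTT1.1.9} is a direct consequence of Theorem \ref{LTT1.1.8}, with the hypotheses verified exactly as in the proof of Theorem \ref{LTT1.1.6} (Condition (A) via the Lindeberg argument of Theorem \ref{LTT1.1.3}, $b_n\sim\sqrt{\mathbb{E}\sigma_0^2}\,n^{1/2}$ from the ergodic theorem, and $\upsilon(\varepsilon)<\infty$ from the elementary splitting you use). Your additional step of passing from the $P_n$-normalization to the $Z_n(\mathbb{R})$-normalization on $\{W>0\}$ is implicit in the paper's phrase ``direct consequence''; your bookkeeping of the centering convention (Condition (A) holds with $a_n/b_n$, so that $p_L(x/b_n-a_n/b_n)=p((x-a_n)/b_n)$) is exactly what is needed to reconcile the two statements.
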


For the deterministic environment case,  similar result was showed by Biggins (1990).
\\*

From Theorem \ref{LTT1.1.9}, we immediately obtain the following corollary.

\begin{co}\label{BRCC1}
Under the conditions of Theorem \ref{LTT1.1.9},   we have $\forall a<b$,
$$b_n\frac{Z_n(a+a_n,b+a_n)}{Z_n(\mathbb{R})} \rightarrow \frac{1}{\sqrt{2\pi}}(b-a)\quad   a.s.\; on\;\{Z_n(\mathbb{R})\rightarrow\infty\},$$
where $a_n=\sum_{i=0}^{n-1}\mu_i$ and $b_n=(\sum_{i=0}^{n-1}\sigma_i^2)^{1/2}$.
\end{co}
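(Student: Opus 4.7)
The plan is to derive this corollary as a direct specialization of Theorem \ref{LTT1.1.9}, exploiting the fact that the convergence there is uniform in $x$. First I would fix $a < b$ and apply Theorem \ref{LTT1.1.9} with $h := b - a$ and with $x$ replaced by the random sequence $x_n := a + a_n$. The uniform supremum in the conclusion of Theorem \ref{LTT1.1.9} legitimates this substitution even though $x_n$ is random and depends on $n$, yielding
$$
b_n\frac{Z_n(a+a_n,\,b+a_n)}{Z_n(\mathbb{R})} \;-\; (b-a)\,p\!\left(\frac{a+a_n-a_n}{b_n}\right) \;\longrightarrow\; 0 \quad a.s. \text{ on } \{Z_n(\mathbb{R})\to\infty\}.
$$

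Next I would simplify the argument of $p$: $(a+a_n - a_n)/b_n = a/b_n$. To close the argument it remains to show that $a/b_n \to 0$ almost surely, so that by continuity of $p$ at $0$ we get $p(a/b_n) \to p(0) = 1/\sqrt{2\pi}$. This follows from the ergodic theorem: since $\mathbb{E}\sigma_0^2 \in (0,\infty)$ by assumption, we have
$$
\frac{b_n^2}{n} \;=\; \frac{1}{n}\sum_{i=0}^{n-1}\sigma_i^2 \;\longrightarrow\; \mathbb{E}\sigma_0^2 \;>\;0 \qquad a.s.,
$$
so $b_n \to \infty$ a.s., whence $a/b_n \to 0$ a.s.

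Combining the two displays, the conclusion
$$
b_n\frac{Z_n(a+a_n,\,b+a_n)}{Z_n(\mathbb{R})} \;\longrightarrow\; \frac{b-a}{\sqrt{2\pi}} \qquad a.s.\ \text{on}\ \{Z_n(\mathbb{R})\to\infty\}
$$
follows. There is no real obstacle in this argument; the only point requiring care is that $x_n = a + a_n$ is itself random, but this is handled gratis by the uniformity in $x$ already built into Theorem \ref{LTT1.1.9}. All the hypotheses needed (in particular $\mathbb{E}\sigma_0^2 \in (0,\infty)$ guaranteeing $b_n \to \infty$) are already included among the standing assumptions of Theorem \ref{LTT1.1.9}, so no extra conditions are needed.
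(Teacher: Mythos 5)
Your proof is correct and is precisely the ``immediate'' deduction the paper has in mind (the paper states the corollary follows immediately from Theorem \ref{LTT1.1.9} without writing out the details): set $h = b-a$, take $x = a + a_n$ inside the uniform supremum, observe that $(x-a_n)/b_n = a/b_n \to 0$ a.s.\ because $b_n^2/n \to \mathbb{E}\sigma_0^2 > 0$ by the ergodic theorem, and conclude $p(a/b_n)\to p(0) = 1/\sqrt{2\pi}$. Nothing is missing and no alternative route is taken.
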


Corollary \ref{BRCC1} coincide with a result of Kaplan and Asmussen (1976, II, Theorem 2) on  deterministic branching random walks.

\section{Proofs of Theorems \ref{LTT1.1.5}-\ref{LTT1.1.8}}\label{LTS11}
Before of the proof of Theorem \ref{LTT1.1.5}, we prove a lemma at first.
\begin{lem}\label{LTL1.4.1}
Let $\beta\geq0$.
If $\mathbb{E}\frac{N_0}{m_0}(\log^+N_0)^{1+\beta}<\infty$, then for all
$\kappa$, $\sum _nn^{\beta}(1-\tilde{m}_{n,\kappa}/m_n)<\infty$ a.s..
\end{lem}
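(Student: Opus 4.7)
The plan is to combine an ergodic lower bound for $\log P_{n+1}$ with a Tonelli--Fubini reversal that transforms the moment hypothesis directly into the required summability. I would first write
\[
1-\frac{\tilde m_{n,\kappa}}{m_n}=\mathbb E_\xi\!\left[\frac{N_n}{m_n}\mathbf{1}_{B_n}\right],\qquad B_n:=\{N_n(\log N_n)^\kappa>P_{n+1}\},
\]
and fix a constant $c\in(0,\mathbb E\log m_0)$. By the ergodic theorem, $n^{-1}\log P_{n+1}\to\mathbb E\log m_0$ a.s., so there is a (random) $n_0(\xi)<\infty$ for which $\log P_{n+1}\ge cn$ holds whenever $n\ge n_0(\xi)$. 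A deterministic calculus check (if $y\le p^{1/2}$ then $y(\log y)^\kappa\le p$ for $p$ large) yields $B_n\subset\{N_n>e^{cn/2}\}$ for all $n\ge n_0(\xi)$ (after possibly enlarging $n_0(\xi)$), whence
\[
\big(1-\tilde m_{n,\kappa}/m_n\big)\mathbf{1}_{\{n\ge n_0(\xi)\}}\le \mathbb E_\xi\!\left[\frac{N_n}{m_n}\mathbf{1}_{\{N_n>e^{cn/2}\}}\right].
\]

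Since $1-\tilde m_{n,\kappa}/m_n\le 1$, the finitely many terms with $n<n_0(\xi)$ only add a (random) finite constant to the sum; it therefore suffices to show that $\sum_n n^\beta\mathbb E_\xi[(N_n/m_n)\mathbf{1}_{\{N_n>e^{cn/2}\}}]<\infty$ a.s., and by Tonelli this will follow from the finiteness of the annealed sum. The key observation is that $\mathbb E_\xi[(N_n/m_n)\mathbf{1}_{\{N_n>t\}}]$ is a function of $\xi_n$ alone, so by stationarity of $(\xi_n)$,
\[
\mathbb E\!\left[\frac{N_n}{m_n}\mathbf{1}_{\{N_n>e^{cn/2}\}}\right]=\mathbb E\!\left[\frac{N_0}{m_0}\mathbf{1}_{\{N_0>e^{cn/2}\}}\right].
\]
Interchanging sum and expectation once more and using the elementary estimate $\sum_n n^\beta\mathbf{1}_{\{cn/2<\log^+ N_0\}}\le C(\log^+ N_0)^{1+\beta}+C'$ (the left-hand side contains at most $2\log^+N_0/c$ terms, each bounded by $(2\log^+N_0/c)^\beta$), I would conclude
\[
\sum_n n^\beta\,\mathbb E\!\left[\frac{N_0}{m_0}\mathbf{1}_{\{N_0>e^{cn/2}\}}\right]\le C\,\mathbb E\!\left[\frac{N_0}{m_0}(\log^+ N_0)^{1+\beta}\right]+C'<\infty
\]
by the hypothesis and the identity $\mathbb E(N_0/m_0)=1$, which closes the argument.

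The main obstacle is resisting the naive Markov bound $\mathbb P_\xi(N_n>e^{cn/2})\le C\mathbb E_\xi(\log^+N_n)^{1+\beta}/n^{1+\beta}$, which yields $\sum_n n^\beta\cdot Cg(\xi_n)/n^{1+\beta}=\sum_n Cg(\xi_n)/n$, a sum that diverges a.s.\ when $g(\xi_n)$ is stationary with positive mean (an easy consequence of the ergodic theorem). The trick that rescues us is to perform the Tonelli interchange \emph{after} the threshold has been replaced by the deterministic quantity $e^{cn/2}$ so that stationarity can be invoked, and only then apply the discrete integration-by-parts bound $\sum_n n^\beta\mathbf{1}_{\{W>cn/2\}}\le CW^{1+\beta}+C'$, which matches the moment hypothesis exactly.
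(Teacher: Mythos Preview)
Your proof is correct and follows essentially the same strategy as the paper: replace the random threshold $P_{n+1}$ by a deterministic exponential lower bound via the ergodic theorem, then take the annealed expectation, use stationarity to reduce to $N_0$, and interchange sum and expectation to match the moment hypothesis. The only cosmetic difference is that the paper keeps the event $\{N_n(\log N_n)^\kappa>c^{n+1}\}$ while you further simplify it to $\{N_n>e^{cn/2}\}$; both lead to the same $(\log^+N_0)^{1+\beta}$ bound.
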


\begin{proof}[Proof]
As the proof of Lemma \ref{LTL1.2.1}, we have
$$\sum_n\left(1-\frac{\tilde{m}_{n,\kappa}}{m_n}\right)=\sum_n\frac{1}{m_n}\mathbb{E}_{\xi}N_nI_n^c(N_n).$$
By (\ref{LTE1.1.2}), for n large enough,
$$ \mathbb{E}_{\xi}N_nI_n^c(N_n)\leq \mathbb{E}_{\xi}N_n {1}_{\{N_n(\log N_n)^\kappa>c^{n+1}\}}.$$
Taking expectation for the series
$\sum\frac{n^{\beta}}{m_n}\mathbb{E}_{\xi}N_n {1}_{\{N_n(\log
N_n)^\kappa>c^{n+1}\}}$, we have
\begin{eqnarray*}
&&\mathbb{E}\left(\sum_n\frac{n^{\beta}}{m_n}\mathbb{E}_{\xi}N_n {1}_{\{N_n(\log
N_n)^\kappa>c^{n+1}\}}\right)\\
&=&\sum_nn^{\beta}\mathbb{E}\frac{N_0}{m_0} {1}_{\{N_0(\log
N_0)^\kappa>c^{n+1}\}}\\
&=&\mathbb{E}\frac{N_0}{m_0}\sum_nn^{\beta} {1}_{\{N_0(\log
N_0)^\kappa>c^{n+1}\}}\\
&\leq&C\mathbb{E}\frac{N_0}{m_0}(\log^+N_0)^{1+\beta}<\infty,
\end{eqnarray*}
so that $\sum_n n^{\beta}(1-\tilde{m}_{n,\kappa}/m_n)<\infty$ a.s..
\end{proof}

\begin{proof}[Proof of Theorem \ref{LTT1.1.5}]
From the proof of Theorem \ref{LTT1.1.1}, we know
that in fact, instead of (\ref{LTE1.1.3}), we only need (\ref{LTE1.2.6}) and
$\sum_n(1-\tilde{m}_{n,\kappa}/m_n)<\infty$ for the suitable $\kappa$.
For the branching random walk in a stationary and ergodic random
environment, Lemma \ref{LTL1.4.1} tells us  that the condition
$\mathbb{E}\frac{N_0}{m_0}\log^+N_0<\infty$ ensures
$\sum_n(1-\tilde{m}_{n,\kappa}/m_n)<\infty$. And it also ensures
(\ref{LTE1.2.6}), since for any $\delta_1>0$,
$$\mathbb{E}\left(\sum_n\frac{1}{m_nn^{1+\delta_1}}\mathbb{E}_{\xi}N_n\log^+N_n\right)=\sum_n\frac{1}{n^{1+\delta_1}}\mathbb{E}\frac{N_0}{m_0}\log^+N_0<\infty.$$
By the ergodic theorem,
$$\lim_n\frac{\upsilon_n(\varepsilon)}{n}=\upsilon(\varepsilon)<\infty\qquad a.s..$$
Hence the condition (\ref{LTE1.1.41}) holds. Thus Theorem \ref{LTT1.1.5} is just a direct
consequence of Theorem \ref{LTT1.1.1}.
\end{proof}

\begin{proof}[Proof of Theorem \ref{LTT1.1.6}]
We will use Theorem \ref{LTT1.1.5} to prove Theorem \ref{LTT1.1.6}. Assume that $0<\varepsilon\leq 2$ (otherwise, consider $\min\{\varepsilon,2\}$ instead of
$\varepsilon$), then
$$\upsilon(\varepsilon)= \mathbb{E}\int|x|^\varepsilon\nu_0(dx)\leq
C_\varepsilon\left( \mathbb{E}\int|x-\mu_0|^\varepsilon\nu_0(dx)+\mathbb{E}|\mu_0|^\varepsilon\right)<\infty.$$
By (\ref{LTE1.4.2}), $b_n\sim \mathbb{E}\sigma_0^2\sqrt{n}$ a.s., which implies that for
any $0<\gamma<\frac{1}{2}$, $b_n^{-1}=o(n^{-\gamma})a.s.$. The proof of Theorem \ref{LTT1.1.3} show that $\{\nu_n\}$ satisfies
Lindeberg condition, so that  (A) holds with  $a_n'=a_n/b_n$ and $b_n'=b_n$.
By Theorem \ref{LTT1.1.5},
$$P_n^{-1}Z_n(-\infty, b_nx+a_n]\rightarrow \Phi(x)W\quad a.s.$$
Notice that $Z_n(\mathbb{R})/P_n\rightarrow W$ a.s. and $\mathbb{P}(W>0)=\mathbb{P}(Z_n(\mathbb{R})\rightarrow\infty)$. Thus (\ref{LTET10.2}) holds.
 \end{proof}

\begin{lem}\label{LTL1.4.2}
Let $A>0$  be a constant. Assume that $b_n\sim \theta n^\gamma
a.s.$  for some constants $0<\gamma\leq\frac{1}{2}$.
If $\nu_0$ is non-lattice a.s.,then there exists a constant
$\theta_1>0$ (not depending on $A$) such that
\begin{equation}\label{LTE1.4.4}
\limsup_{n\rightarrow\infty}
b_n\int_U\prod_{i=k}^{n-1}|\phi_i(t)|dt\leq\int_{|t|\geq
A}e^{-\theta_1t^2}dt\quad a.s.,
\end{equation}
where $k=J(n)$ the same as the proof of Theorem \ref{LTT1.1.1} and
$U=\{t:\frac{A}{b_n}\leq|t|\leq\frac{1}{a}\}$.
\end{lem}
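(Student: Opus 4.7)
The plan is to follow the same template as the proof of (\ref{LTE1.3.5}) inside Lemma \ref{LTL1.3.2}, with the uniform-in-$i$ control coming from condition (\ref{LTET1.1.21}) replaced by \emph{ergodic averages} of random upper bounds, which are made strict through the non-lattice assumption on $\nu_0$ and continuity of the characteristic function. As in that argument, split $U=U_1\cup U_2$ where $U_1=\{A/b_n\le|t|\le\epsilon\}$ and $U_2=\{\epsilon\le|t|\le1/a\}$, with $\epsilon>0$ chosen small enough that $2\epsilon\le1/a$.

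For the tail piece $U_2$: since $\nu_0$ is non-lattice a.s., the continuous function $|\phi_0|$ satisfies $|\phi_0(t)|<1$ for every $t\neq0$; by compactness of $\{\epsilon\le|t|\le1/a\}$, the random variable $K(\xi_0):=\sup_{\epsilon\le|t|\le1/a}|\phi_0(t)|$ is a.s. strictly less than $1$. Because $\{K(\xi_i)\}$ is stationary ergodic with $\log K(\xi_i)\le 0$, the ergodic theorem gives
\[
\frac1n\sum_{i=0}^{n-1}\log K(\xi_i)\longrightarrow \mathbb{E}\log K(\xi_0)\in[-\infty,0)\quad \text{a.s.,}
\]
and since $k=J(n)=o(n)$ this yields $\prod_{i=k}^{n-1}K(\xi_i)\le e^{-cn}$ for some $c>0$ and $n$ large. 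As $b_n\sim\theta n^\gamma$ grows only polynomially, $b_n\int_{U_2}\prod_{i=k}^{n-1}|\phi_i(t)|\,dt\le (2/a)b_n e^{-cn}\to 0$ a.s.

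For the part $U_1$ near $0$: apply Lemma \ref{LTL1.3.1} with $b=\epsilon$ to each $\phi_i$, using the random bound $K_1(\xi_i):=\sup_{\epsilon\le|t|\le2\epsilon}|\phi_i(t)|<1$ a.s. This gives, for $|t|<\epsilon$,
\[
|\phi_i(t)|\;\le\;1-\frac{1-K_1(\xi_i)^2}{8\epsilon^2}\,t^2\;\le\;\exp\!\Bigl(-\frac{1-K_1(\xi_i)^2}{8\epsilon^2}\,t^2\Bigr),
\]
whence $\prod_{i=k}^{n-1}|\phi_i(t)|\le \exp\bigl(-\tfrac{t^2}{8\epsilon^2}\sum_{i=k}^{n-1}(1-K_1(\xi_i)^2)\bigr)$. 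Since $\mathbb{E}(1-K_1(\xi_0)^2)>0$ and $k=o(n)$, the ergodic theorem yields $\sum_{i=k}^{n-1}(1-K_1(\xi_i)^2)\ge \tfrac12 n\,\mathbb{E}(1-K_1(\xi_0)^2)$ a.s. for large $n$. Changing variables to $s=b_n t$,
\[
b_n\int_{U_1}\prod_{i=k}^{n-1}|\phi_i(t)|\,dt\;\le\;\int_{|s|\ge A}\exp\!\Bigl(-\frac{n\,\mathbb{E}(1-K_1(\xi_0)^2)}{16\,\epsilon^2\,b_n^2}\,s^2\Bigr)ds .
\]
Using $b_n^2\sim\theta^2n^{2\gamma}$ with $\gamma\le 1/2$, the quantity $n/b_n^2$ has a finite positive limit (when $\gamma=1/2$) or tends to $+\infty$ (when $\gamma<1/2$); in either case there is $\theta_1>0$, depending only on $\theta,\epsilon,\gamma$ and $\mathbb{E}(1-K_1(\xi_0)^2)$ but not on $A$, such that for $n$ large the exponent is bounded above by $-\theta_1 s^2$. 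Combining the two pieces gives the claimed inequality.

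The main obstacle is the step where one must pass from the pointwise, random strict inequalities $K(\xi_i)<1$ and $K_1(\xi_i)<1$ (which are all that non-latticeness supplies) to an effective positive lower bound on a sum over $i$. This is accomplished cleanly only through ergodicity plus the fact that $k=J(n)$ is of smaller order than $n$, so the trimmed ergodic averages $n^{-1}\sum_{i=k}^{n-1}(1-K_1(\xi_i)^2)$ still converge to the strictly positive limit $\mathbb{E}(1-K_1(\xi_0)^2)$. The remaining bookkeeping — pairing the resulting polynomial-in-$n$ factor in the exponent with the polynomial growth $b_n^2\sim\theta^2n^{2\gamma}$ — is what forces the restriction $\gamma\le 1/2$ and produces a $\theta_1$ independent of $A$.
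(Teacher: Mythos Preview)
Your argument is correct and follows essentially the same route as the paper's proof: the same split $U=U_1\cup U_2$, the same use of Lemma \ref{LTL1.3.1} to produce a Gaussian envelope on $U_1$, and the ergodic theorem in place of the uniform bound (\ref{LTET1.1.21}) to control both $\prod_{i=k}^{n-1}|\phi_i|$ on $U_2$ and the exponent $\sum_{i=k}^{n-1}(1-K_1(\xi_i)^2)$ on $U_1$. The only cosmetic difference is that the paper uses a single random quantity $c_i=\sup_{\epsilon\le|t|\le a^{-1}}|\phi_i(t)|$ for both parts, whereas you introduce two ($K$ over $[\epsilon,1/a]$ and $K_1$ over $[\epsilon,2\epsilon]$); since $2\epsilon\le 1/a$ these are interchangeable for the purpose at hand.
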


\begin{proof}[Proof]
Take $0<2\epsilon<\frac{1}{a}$. Like the last part
of the proof of Theorem \ref{LTT1.2}, split $U$ into $U_1$ and $U_2$, so
$$b_n\int_U\prod_{i=k}^{n-1}|\phi_i(t)|dt=b_n\int_{U_1}\prod_{i=k}^{n-1}|\phi_i(t)|dt+b_n\int_{U_2}\prod_{i=k}^{n-1}|\phi_i(t)|dt.$$
Since $\nu_i$ is non-lattice a.s., we have
\begin{equation}\label{LTE1.4.5}
\sup_{\epsilon\leq|t|\leq
a^{-1}}|\phi_i(t)|=:c_i(\epsilon,a)=c_i<1\qquad a.s..
\end{equation}
Hence by Lemma \ref{LTL1.3.1}, for $|t|<\epsilon$,
\begin{equation}\label{LTE1.4.6}
|\phi_i(t)|\leq1-\frac{1-c_i^2}{8\epsilon^2}t^2\leq\exp(-\frac{1-c_i^2}{8\epsilon^2}t^2)=e^{-\alpha_it^2}\qquad
a.s.,
\end{equation}
where $\alpha_i=\frac{1-c_i^2}{8\epsilon^2}>0$ a.s.. Using (\ref{LTE1.4.5}),
we immediately get
\begin{equation}\label{LTE1.4.7}
b_n\int_{U_2}\prod_{i=k}^{n-1}|\phi_i(t)|\leq\frac{2}{a}b_n\prod_{i=k}^{n-1}c_i\rightarrow0\qquad
a.s.,
\end{equation}
since
$$\lim_{n\rightarrow\infty}\frac{\log b_n+\sum_{i=k}^{n-1}\log c_i}{n}=\mathbb{E}\log c_0<0\qquad a.s..$$
Observe that
\begin{displaymath}
\lim_{n\rightarrow\infty}\frac{\sum_{i=k}^{n-1}\alpha_i}{b_n^2}=\left\{
\begin{array}{ll}\frac{1}{\theta^2}\mathbb{E}\alpha_0>0&\textrm{ if $\gamma=\frac{1}{2}$}\\
\infty&\textrm{ if $0<\gamma<\frac{1}{2}$}
\end{array}\right.\qquad a.s..
\end{displaymath}
Take $0<\theta_1<\frac{1}{\theta^2}\mathbb{E}\alpha_0$. Using (\ref{LTE1.4.6}), we have for
$n$ large,
\begin{equation}\label{LTE1.4.8}
b_n\int_{U_1}\prod_{i=k}^{n-1}|\phi_i(t)|dt\leq\int_{|t|\geq
A}\exp(-b_n^{-2}\sum_{i=k}^{n-1}\alpha_it^2)du\leq\int_{|t|\geq
A}e^{-\theta_1t^2}dt\qquad a.s..
\end{equation}
(\ref{LTE1.4.7}) and (\ref{LTE1.4.8}) yield (\ref{LTE1.4.4}).
\end{proof}

\begin{proof}[Proof of Theorem \ref{LTT1.1.8}]
In the proof of Lemma \ref{LTL1.3.2}, the condition
(\ref{LTET1.1.21}) is just used to ensure (\ref{LTE1.3.5}) (i.e.(\ref{LTE1.4.4}) in random
environment), which always holds in random environment if $\nu_i$ is
non-lattice a.s., by Lemma \ref{LTL1.4.2}. Besides,
\begin{eqnarray*}
 \mathbb{E}\left(\sum_n\frac{1}{m_nn(\log n)^{1+\delta}} \mathbb{E}_\xi
N_n(\log^+N_n)^{1+\beta}\right)
=\sum_n\frac{1}{n(\log n)^{1+\delta}}\mathbb{E}\frac{N_0}{m_0}(\log^+N_0)^{1+\beta}<\infty.
\end{eqnarray*}
So (\ref{LTET1.1.81}) implies (\ref{LTET1.1.22}). Theorem \ref{LTT1.1.8} is a consequence of Theorem \ref{LTT1.2}.
\end{proof}

\section{Central limit theorems for $ {\mathbb{E}_\xi\frac{Z_n(\cdot)}{Z_n(\mathbb{R})} }$  and  $ \mathbb{E}\frac{Z_n(\cdot)}{Z_n(\mathbb{R})} $}\label{LTS12}
From Theorem  \ref{LTT1.1.6}, it is not hard to obtain the following  central  limit theorems for the probability measures $\mathbb{E}_\xi(\frac{Z_n(\cdot)}{Z_n(\mathbb{R})}|Z_n(\mathbb{R})>0)$ and $\mathbb{E}(\frac{Z_n(\cdot)}{Z_n(\mathbb{R})}|Z_n(\mathbb{R})>0)$:

\begin{thm}[Central limit theorems for $ {\mathbb{E}_\xi\frac{Z_n(\cdot)}{Z_n(\mathbb{R})} }$  and  $ \mathbb{E}\frac{Z_n(\cdot)}{Z_n(\mathbb{R})} $]\label{LTT1.1.7}
 If $\mathbb{E}|\mu_0|^\varepsilon<\infty$ for some $\varepsilon>0$ and $\mathbb{E}\sigma_0^2\in(0,\infty)$, then
\begin{eqnarray}
&&  \mathbb{E}_\xi\left(\left.\frac{Z_n(-\infty, b_nx+a_n]}{Z_n(\mathbb{R})}\right|Z_n(\mathbb{R})>0\right)\rightarrow \Phi(x)\qquad a.s., \label{LTET1.1.7a}\\
&&\mathbb{E}\left(\left.\frac{Z_n(-\infty,b_nx+a_n]}{Z_n(\mathbb{R})}\right|Z_n(\mathbb{R})>0\right)\rightarrow  \Phi(x),\label{LTET1.1.7b}
\end{eqnarray}
where $a_n=\sum_{i=0}^{n-1}\mu_i$ and $b_n=(\sum_{i=0}^{n-1}\sigma_i^2)^{1/2}$.
 \end{thm}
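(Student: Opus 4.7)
The plan is to deduce both assertions from Theorem \ref{LTT1.1.6} together with bounded convergence, exploiting the trivial bound $0\leq Z_n(-\infty,b_nx+a_n]/Z_n(\mathbb{R})\leq 1$. Throughout I write $\mathcal{S}=\{Z_n(\mathbb{R})\to\infty\}$ for the survival event and use the fact, recalled just after (\ref{LTE1.2}), that under our hypotheses $\mathcal{S}=\{W>0\}$ up to a $\mathbb{P}$-null set, with $\mathbb{P}_\xi(\mathcal{S})=\mathbb{P}_\xi(W>0)>0$ for $\tau$-a.e.\ $\xi$, and with $\mathbb{P}_\xi(Z_n(\mathbb{R})>0)\downarrow \mathbb{P}_\xi(\mathcal{S})$ a.s. The latter is the key ingredient that will let me replace $\mathbf{1}_{\{Z_n(\mathbb{R})>0\}}$ by $\mathbf{1}_{\mathcal{S}}$ asymptotically.

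For the quenched statement (\ref{LTET1.1.7a}), I would start from the identity
$$\mathbb{E}_\xi\!\left(\left.\frac{Z_n(-\infty,b_nx+a_n]}{Z_n(\mathbb{R})}\right|Z_n(\mathbb{R})>0\right)=\frac{\mathbb{E}_\xi\bigl(\frac{Z_n(-\infty,b_nx+a_n]}{Z_n(\mathbb{R})}\mathbf{1}_{\{Z_n(\mathbb{R})>0\}}\bigr)}{\mathbb{P}_\xi(Z_n(\mathbb{R})>0)}.$$
Since $\mathcal{S}\subseteq\{Z_n(\mathbb{R})>0\}$ and $\mathbb{P}_\xi(\{Z_n(\mathbb{R})>0\}\setminus\mathcal{S})\to 0$ a.s., the indicator $\mathbf{1}_{\{Z_n(\mathbb{R})>0\}}$ can be replaced by $\mathbf{1}_{\mathcal{S}}$ at the cost of an error uniformly bounded by $\mathbb{P}_\xi(\{Z_n(\mathbb{R})>0\}\setminus\mathcal{S})\to 0$. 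On $\mathcal{S}$, Theorem \ref{LTT1.1.6} delivers the a.s.\ pointwise convergence of the integrand to $\Phi(x)$, and the integrand is dominated by $1$; dominated convergence under $\mathbb{P}_\xi$ (valid for $\tau$-a.e.\ $\xi$) then shows that the numerator tends to $\Phi(x)\mathbb{P}_\xi(\mathcal{S})$. Dividing by $\mathbb{P}_\xi(Z_n(\mathbb{R})>0)\to \mathbb{P}_\xi(\mathcal{S})>0$ yields (\ref{LTET1.1.7a}).

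For the annealed statement (\ref{LTET1.1.7b}), I would integrate out the environment using
$$\mathbb{E}\!\left(\left.\frac{Z_n(-\infty,b_nx+a_n]}{Z_n(\mathbb{R})}\right|Z_n(\mathbb{R})>0\right)=\frac{\mathbb{E}\bigl[\mathbb{P}_\xi(Z_n(\mathbb{R})>0)\,\Theta_n(\xi)\bigr]}{\mathbb{P}(Z_n(\mathbb{R})>0)},$$
where $\Theta_n(\xi):=\mathbb{E}_\xi(Z_n(-\infty,b_nx+a_n]/Z_n(\mathbb{R})\mid Z_n(\mathbb{R})>0)\in[0,1]$. By (\ref{LTET1.1.7a}), $\Theta_n(\xi)\to \Phi(x)$ for $\tau$-a.e.\ $\xi$, while $\mathbb{P}_\xi(Z_n(\mathbb{R})>0)\to\mathbb{P}_\xi(W>0)$ a.s.; both factors are bounded by $1$, so a second application of dominated convergence on $(\Theta^{\mathbb{N}},\tau)$ shows that the numerator converges to $\Phi(x)\,\mathbb{E}[\mathbb{P}_\xi(W>0)]=\Phi(x)\mathbb{P}(W>0)$ and the denominator to $\mathbb{P}(W>0)>0$. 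The ratio therefore tends to $\Phi(x)$, proving (\ref{LTET1.1.7b}). The only subtle point in the whole argument is isolating $\mathcal{S}$ from the larger event $\{Z_n(\mathbb{R})>0\}$; once that book-keeping is done, the rest is two routine applications of bounded convergence, first under $\mathbb{P}_\xi$ and then under $\tau$.
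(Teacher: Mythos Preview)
Your argument is correct and follows essentially the same approach as the paper: split the indicator $\mathbf{1}_{\{Z_n(\mathbb{R})>0\}}$ into $\mathbf{1}_{\mathcal{S}}$ plus a remainder of vanishing mass, then invoke Theorem \ref{LTT1.1.6} and bounded convergence on the main piece. The only organizational difference is that the paper proves the annealed statement (\ref{LTET1.1.7b}) directly under $\mathbb{P}$ and declares (\ref{LTET1.1.7a}) to be similar, whereas you prove the quenched statement first and then integrate over the environment to obtain (\ref{LTET1.1.7b}); the underlying ideas are the same.
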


\begin{proof}[Proof]
Theorem \ref{LTT1.1.7}  is a consequence of Theorem \ref{LTT1.1.6}.  We only prove (\ref{LTET1.1.7b}), the proof for  (\ref{LTET1.1.7a}) is similar.
By Theorem \ref{LTT1.1.6},
\begin{equation}\label{cq3}
\left(\frac{Z_n(-\infty,b_n x+a_n]}{Z_n(\mathbb{R})}-\Phi(x)\right) {1}_{\{Z_n(\mathbb{R})\rightarrow\infty\}}
\rightarrow 0\quad a.s..
\end{equation}
The condition $\mathbb{E}\frac{N}{m_0}\log^+N<\infty$ ensures that
$$\lim_{n\rightarrow\infty}\mathbb{P}(Z_n(\mathbb{R})>0)=\mathbb{P}(Z_n(\mathbb{R})\rightarrow\infty)>0.$$
Observing that
\begin{eqnarray*}
&&\left|\mathbb{E}\left(\left.\frac{Z_n(-\infty,b_n x+a_n]}{Z_n(\mathbb{R})}\right|Z_n(\mathbb{R})>0\right)-\Phi(x) \right|\\
&=&\frac{1}{\mathbb{P}(Z_n(\mathbb{R})>0)}\left|\mathbb{E} {1}_{\{Z_n(\mathbb{R})>0\}}\left(\frac{Z_n(-\infty,b_n x+a_n]}{Z_n(\mathbb{R})}-\Phi(x)\right)\right|\\
&\leq&\frac{1}{\mathbb{P}(Z_n(\mathbb{R})>0)}\left|\mathbb{E}\left( {1}_{\{Z_n(\mathbb{R})>0\}}- {1}_{\{Z_n(\mathbb{R})\rightarrow\infty\}}\right)
\left(\frac{Z_n(-\infty,b_n x+a_n]}{Z_n(\mathbb{R})}-\Phi(x)\right)\right|\\
&&+\frac{1}{\mathbb{P}(Z_n(\mathbb{R})>0)}\left|\mathbb{E} {1}_{\{Z_n(\mathbb{R})\rightarrow\infty\}}\left(\frac{Z_n(-\infty,b_n x+a_n]}{Z_n(\mathbb{R})}-\Phi(x)\right)\right|,
\end{eqnarray*}
we only need to show that the two terms in the right side of the inequality above tend to zero as $n$ tends to infinity.  Since
$$0\leq\frac{Z_n(-\infty,b_n x+a_n]}{Z_n(\mathbb{R})}\leq 1\quad\text{and}\quad 0\leq\Phi(x)\leq1 ,$$
we have
$$\left|\frac{Z_n(-\infty,b_n x+a_n]}{Z_n(\mathbb{R})}-\Phi(x)\right|\leq 1.$$
Notice (\ref{cq3}), by the dominated convergence theorem, we get
$$\left|\mathbb{E} {1}_{\{Z_n(\mathbb{R})\rightarrow\infty\}}\left(\frac{Z_n(Z_n(-\infty,b_n x+a_n])}{Z_n(\mathbb{R})}-\Phi(x)\right)\right|
\rightarrow 0.$$
For the first term, we have
\begin{eqnarray*}
&&\left|\mathbb{E}( {1}_{\{Z_n(\mathbb{R})>0\}}- {1}_{\{Z_n(\mathbb{R})\rightarrow\infty\}})
\left(\frac{Z_n(-\infty,b_n x+a_n]}{Z_n(\mathbb{R})}-\Phi(x)\right)\right|\\
&\leq&\mathbb{E}| {1}_{\{Z_n(\mathbb{R})>0\}}- {1}_{\{Z_n(\mathcal{R})\rightarrow\infty\}}|\\
&=&\mathbb{P}(Z_n(\mathbb{R})>0)-\mathbb{P}(Z_n(\mathbb{R})\rightarrow\infty)\rightarrow 0.
\end{eqnarray*}
This completes the proof.
\end{proof}


\begin{thebibliography}{99}

\bibitem{a1}K. B. Athreya, S. Karlin,   On branching processes in random environments I \& II.  Ann. Math.  Statist. 42 (1971), 1499-1520 \& 1843-1858.

\bibitem{a}K. B. Athreya, P. E. Ney,  Branching Processes. Springer, Berlin, 1972.

\bibitem{biggins77}J. D. Biggins, Martingale convergence in the branching random walk. J. Appl. prob. 14 (1977), 25-37.

\bibitem{biggins}J. D. Biggins,  Chernoff's theorem in the branching random walk. J. Appl. Probab. 14 (1977), 630-636.

\bibitem{biggins79}J. D. Biggins, Growth rates in the branching random walk. Z. Wahrsch. verw. Geb. 48 (1979), 17-34.

\bibitem{biggins97}J. D. Biggins, A. E. Kyprianou, Seneta-Heyde norming in the branching random walk. Ann. Prob. 25 (1997), 337-360.

\bibitem{b}J. D. Biggins, The central limit theorem for the
supercritical branching random walk,and related results.  Stoch.
Proc. Appl. 34 (1990), 255-274.

\bibitem{biggins04}J. D. Biggins, A. E. Kyprianou, Measure change in multitype
branching. Adv. Appl. Probab. 36 (2004), 544-581.

\bibitem{chauvin}B. Chauvin, A. Rouault, Boltzmann-Gibbs weights in the branching random walk.   In K.B. Athreya, P. Jagers,  (eds.), Classical and Modern Branching Processes, IMA Vol. Math. Appl. 84, pp. 41-50,  Springer-Verlag, New York, 1997.

\bibitem{c}H. Cram\'er, Random variables and probability distributions. Cambridge University Press, Cambridge, 1937.

\bibitem{z}A. Dembo,  O. Zeitouni, Large deviations Techniques and Applications. Springer, New York, 1998.

\bibitem{durrett}R. Durrett, T. Liggett, Fixed points  of the smoothing transformation. Z. Wahrsch. verw. Geb.  64 (1983), 275-301.

\bibitem{fel}W. Feller, An introduction to probability theory and its applications, Vol.II. Wiley, New York, 1971.

\bibitem{franchi}J. Franchi,  Chaos Multiplicatif: un Traitement Simple et Complet de la Fonction de Partition.  S\'eminaire de probabilit\'es XXIX. Springer, 1995, 194-201.

\bibitem{guivarch}Y. Guivarc'h, Sur une extension de la notion de loi semi-stable. Ann. Inst. H. Poincar\'e. Probab. Statist. 26 (1990), 261-285.

\bibitem{Harris}T. E. Harris, The theory of branching process. Springer, Berlin, 1963.

\bibitem{ham}B. Hambly,  On the limit distribution of a supercritical
branching process in a random environment. J. Appl. Prob. 29 (1992), 499-518.

\bibitem{huang}C. Huang, Q. Liu, Moments, moderate  and large deviations for a branching process
in a random environment. Stoch. Proc. Appl. 122 (2012), 522-545.

 \bibitem{jajer}P. Jagers, Galton-Watson processes in varying environments. J. Appl. Prob. 11 (1974), 174-178.

 \bibitem{kahane}J. P. Kahane, J. Peyri\`ere, Sur certaines martingales de Benoit Mandelbrot.  Adv. Math. 22 (1976), 131-145.

\bibitem{ka}N. Kaplan, S. Asmussen, Branching random walks I \& II. Stoch. Proc. Appl. 4 (1976), 1-13 \& 15-31.

\bibitem{ku}D. Kuhlbusch, On weighted branching processes in random environment. Stoch. Proc. Appl. 109 (2004), 113-144.

\bibitem{k}C. F. Klebaner, Branching random walk in varying environment. Adv. Appl. Proba. 14 (1982), 359-367.

\bibitem{liu97}Q. Liu, Sur une \'equation fonctionnelle et ses applications: une extension du th\'eor\`eme de Kesten-Stigum concernant des processus de branchement. Adv. Appl. Prob. 29 (1997), 353-373.

\bibitem{liu98}Q. Liu, Fixed points of a generalized smoothing transformation and applications to branching processes. Adv. Appl. Prob. 30 (1998), 85-112.

\bibitem{liu00}Q. Liu, On generalized multiplicative cascades. Stoch. Proc. Appl. 86 (2000), 61-87.

\bibitem{liu4}Q. Liu, A. Rouault,  Limit theorems for Mandelbrot's multiplicative cascades. Ann. Appl. Proba. 10 (2000), 218-239.

\bibitem{liu01}Q. Liu, Asymptotic properties absolute continuity of laws stable by random weighted mean. Stoch. Proc. Appl. 95 (2001), 83-107.

\bibitem{liu5}Q. Liu, Branching Random Walks in Random Environment. Proceedings
of the 4th International Congress of Chinese Mathematicians, 2007 (ICCM 2007), Vol. II, 702-719. Eds: L. Ji, K. Liu, L. Yang, S.-T. Yau.




\bibitem{lyons}R. Lyons, A simple path to Biggins's martingale convergence for branching random walk. In K.B. Athreya, P. Jagers,  (eds.),  Classical and Modern Branching Processes, IMA Vol. Math. Appl. 84, pp. 217-221,  Springer-Verlag, New York, 1997.

 \bibitem{smith}W. L. Smith,  W. Wilkinson,  On branching processes in random environments. Ann. Math. Statist. 40 (1969), 814-827.

 \bibitem{stam}A. J. Stam, On a conjecture of Harris, Z. Wahrsch. Verw. Geb. 5 (1966) 202-206.

 \bibitem{s}C. Stone, A local limit theorem for nonlattice multi-dimensional distribution functions. Ann. Math. Statist. 36 (1965), 546-551.

\bibitem{tanny1}D. Tanny,  Limit theorems for branching processes in
a random environment. Ann. Proba.  5 (1977), 100-116.

\bibitem{tanny2}D. Tanny, A necessary and sufficient condition for a
branching process in a random environment to grow like the product
of its means. Stoch. Proc. Appl. 28 (1988), 123-139.




\end{thebibliography}
\end{document}